\newcommand{\cS}[1]{{\noindent\textsf{\color{purple}$\blacksquare$~#1~$\blacksquare$}}}
\newcommand{\hide}[1]{}
\newtheorem{theorem}{Theorem}[section]
\newtheorem{proposition}[theorem]{Proposition}
\newtheorem{conjecture}[theorem]{Conjecture}
\newtheorem{corollary}[theorem]{Corollary}
\newtheorem{lemma}[theorem]{Lemma}
\theoremstyle{definition}
\newtheorem{remark}[theorem]{Remark}
\newtheorem{example}[theorem]{Example}
\newtheorem{definition}[theorem]{Definition}
\numberwithin{equation}{section}
\newcommand{\mutation}[1]{\stackrel{#1}{\longleftrightarrow}}
\newenvironment{blue}{\relax\color{blue}}{\hspace*{.5ex}\relax}
\newcommand{\beb}{\begin{blue}}
\newcommand{\eb}{\end{blue}}
\DeclareFontFamily{U}{mathx}{\hyphenchar\font45}
\DeclareFontShape{U}{mathx}{m}{n}{
      <5> <6> <7> <8> <9> <10>
      <10.95> <12> <14.4> <17.28> <20.74> <24.88>
      mathx10
      }{}
\DeclareSymbolFont{mathx}{U}{mathx}{m}{n}
\DeclareMathAccent{\widecheck}{0}{mathx}{"71}
\begin{document}

\title{Mutation cycles from reddening sequences}

\author[T. J. Ervin]{Tucker J. Ervin}
\address{Department of Mathematics, University of Alabama,
	Tuscaloosa, AL 35487, U.S.A.}
\email{tjervin@crimson.ua.edu}

\author{Scott Neville}
\address{Department of Mathematics, University of Michigan, Ann Arbor, MI 48109, USA}
\email{nevilles@umich.edu}

\date{April 8, 2025.}

\thanks{Partially supported by NSF grants DMS-1840234, DMS-2054231 
and DMS-2348501 (S.~N.).}

\subjclass{
Primary
13F60, 
Secondary
05C20. 
}

\keywords{Quiver mutation, mutation cycle, cluster algebras.}

\begin{abstract}
Given two quivers, each with a reddening sequence, we show how to construct a plethora of mutation cycles.
We give several examples, including a generalization of the construction of long mutation cycles in earlier work by the second author.
We also give new results on the reddening sequences of certain mutation-acyclic quivers and forks, classifying them in some cases.
\end{abstract}
\maketitle

\section{Introduction}

A quiver is a directed graph with no directed $1$ or $2$ cycles (but parallel arrows are allowed).
Mutations are involutions which transform a quiver.
The construction of cluster algebras is founded on the combinatorics of quivers and their mutations~\cite{MR1887642}. 
This paper relates two seemingly unrelated sequences of mutations: reddening sequences and mutation cycles.
A reddening sequence is a sequence of mutations that reverses all the arrows added in the ``framing" of a given quiver.
A mutation cycle is a (nontrivial) sequence of mutations which transform a given quiver into itself.

Our main results, Propositions~\ref{prop:extensions with equality give cycles} and~\ref{prop:extensions without equality give cycles}, are that one can take any two quivers $H,T$ with reddening sequences $\mathbf M_H,\mathbf M_T$ respectively, and construct a new quiver which lies on a mutation cycle. 
The construction is straightforward: 
take the quiver whose vertices are the disjoint union of those in $H$ and $T$;
add any number of arrows from vertices in $T$ to vertices in $H$.
(This is called a triangular extension.) 
The resulting quiver will be preserved, up to isomorphism, by mutating at the concatenation of the reddening sequences $\mathbf M_T \mathbf M_H$. 

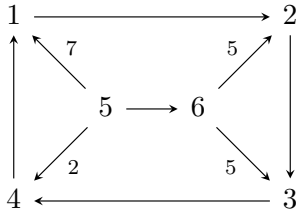
\begin{figure}[ht]
    \centering
\begin{tikzcd}[arrows={-stealth}, sep=1.8em]
1 & & & 2 \\
& 5 & 6 & \\
4 & & & 3
\arrow[from=1-1, to=1-4]
\arrow[from=1-4, to=3-4]
\arrow[from=3-4, to=3-1]
\arrow[from=3-1, to=1-1]
\arrow[from=2-2, to=2-3]
\arrow[to=1-1, from=2-2, "7"']
\arrow[to=3-1, from=2-2, "2"]
\arrow[to=1-4, from=2-3, "5"]
\arrow[to=3-4, from=2-3, "5"']
\end{tikzcd}
    \caption{A triangular extension of quivers of finite types $A_2$ (with vertices $\{5,6\}$) and $D_4$ (with vertices $\{1,2,3,4\}$). Mutating at vertices $ 5,6, 1, 2, 1, 3, 2, 4, 2, 1$ gives a mutation cycle.}
    \label{fig:example extension}
\end{figure}

Our construction is essentially immediate from existing theorems in the literature, but, to the best of our knowledge, it has not been noticed previously. 
We precisely describe the mutation cycle, and show that many of the mutation cycles constructed this way are simple 
(Theorem~\ref{thm:distinguishing minimal}).

Mutation cycles are useful for a few reasons. 
First, mutation cycles give nontrivial automorphisms in their cluster algebras; indeed, they are a necessary condition for the cluster automorphism group~\cite{AssemShifflerShramchenkoClusterAuts} (or cluster modular group~\cite{FockGoncharovEnsembles}) to be non-trivial.
Second, understanding the possible mutation cycles can help us design and test algorithms for mutation equivalence. 
Detecting the mutation equivalence of quivers is a major open problem. 
Third, one can use mutation cycles to create discrete time dynamical systems (e.g.\ by specializing cluster variables and iterating the cluster automorphism from the cycle, or by embedding the quivers into larger quivers~\cite{MachacekOvenhouseDynamicalMutation}).

Reddening sequences have also found several applications \cite{KellerGreenSurvey}.
For example, they give precise formulas for Donaldson-Thomas invariants \cite{kellerDT, KontsevichSoibelmanDT}, and, when a quiver has multiple reddening sequences, they provide quantum dilogarithm identities.
The automorphisms arising from reddening sequences (called twists) are often useful.

All of the quivers on the mutation cycles we construct have reddening sequences of their own, and so may be used to construct yet more mutation cycles. 
Many of these quivers are locally-acyclic (in fact, Banff), and so in particular their upper cluster algebra agrees with their cluster algebra \cite{muller_locally_2013}.
This construction includes the constructions in both Theorem~1.1 and Example~10.3 of \cite{LMC}. 
Unlike the cycles constructed there, some of these mutation cycles may be `paved' by shorter mutation cycles.
Also, these mutation cycles are often not unique in the mutation class - indeed, if one of the quivers used in the triangular extension has multiple reddening sequences then the triangular extension will lie on multiple mutation cycles.
Our results also give a plethora of examples of mutation cycles, with many free parameters, and without needing an explicit description of all the quivers on the mutation cycles.

Motivated by classifying mutation cycles, we also investigate properties of reddening sequences. 
We show that any reddening sequence must pass through the forkless part of the mutation graph, and is always conjugate to a reddening sequence entirely in the forkless part. 
Using these results, we classify all of the reddening sequences for abundant-acyclic quivers and keys.
These classifications also explicitly identify the isomorphism associated to the reddening sequences.

\smallskip

\noindent
\textbf{Paper organization.}
In Section~\ref{sec:prelim} we give precise definitions for quivers, mutation, mutation cycles, and reddening sequences. We also state the key results from the literature which we will use (in particular Corollaries~\ref{cor:reddening gives iso}~\ref{cor:extensions give iso cycles} and Theorems~\ref{thm:muller redd}~\ref{thm:extension red seq}).
In Section~\ref{sec:red makes cycles} we will state and prove our main results for mutation cycles, Propositions~\ref{prop:extensions with equality give cycles}~\ref{prop:extensions without equality give cycles} and Theorem~\ref{thm:distinguishing minimal}, as well as our structural results for reddening sequences.
We also state a conjecture that all reddening sequences with nontrivial associated permutations are due to some embedding of the finite type quiver $A_2$.
In Section~\ref{sec:examples} we provide many examples of reddening sequences, as well as references and brief descriptions to many more examples and constructions.
We then illustrate our results by combining these examples to give several new mutation cycles.
Also, in Example~\ref{eg:fordy-marsh not extension} we give a new example of a long mutation cycle which is not the result of a triangular extension.

\newpage
\section{Preliminaries and prior work}
\label{sec:prelim}

We briefly review the basics of quiver combinatorics, with a focus on reddening sequences and $C$-matrices.
More detail can be found in any of \cite{MR4695532, LMC, fomin2021introduction, MR3155783}. 

\begin{definition}
A \emph{quiver} $Q$ is a finite directed graph with multiple edges (or arrows) allowed, but no directed $2$-cycles or loops. 
A \emph{subquiver} of $Q$ is a full subgraph (i.e., we delete a subset of vertices and all arrows incident to the deleted vertices).

All of our quivers have labeled vertices. 
Two quivers are isomorphic if they agree up to a change of vertex labels. 
They are equal if they are equal as labeled graphs. 

We use $|Q|$ to denote the number of vertices in a quiver $Q$. This number is also called the \emph{rank} of the quiver.
\end{definition}

\begin{definition}
Fix a vertex $i$ in $Q$. 
To mutate $Q$ at $i$, apply the following operations:
\begin{enumerate}
\item for each directed path of length $2$ through $i$, $u \rightarrow i \rightarrow v$, add a new arrow $u \rightarrow v$;
\item reverse all arrows incident to $i$;
\item delete a maximal collection of oriented $2$-cycles. 
\end{enumerate}
We call the resulting quiver $\mu_i(Q)$. 
The \emph{mutation class} $\big [Q \big ]$ of a quiver $Q$ is the set of all quivers which can be obtained from $Q$ by a sequence of mutations.
\end{definition}

\begin{figure}[ht]
    \centering
    \begin{tikzcd}[every arrow/.append style = {-{Stealth}}]
        & 2 \\
        1 & & 3 \\
        \arrow[from=2-1,to=1-2, "4"]
        \arrow[from=1-2,to=2-3, "4"]
        \arrow[from=2-3,to=2-1, "4"]
    \end{tikzcd}
    \text{ and }
    \begin{tikzcd}[every arrow/.append style = {-{Stealth}}]
        & 2 \\
        1 & & 3 \\
        \arrow[from=2-3,to=1-2, "4"']
        \arrow[from=1-2,to=2-1, "4"']
        \arrow[from=2-1,to=2-3, "4"']
    \end{tikzcd}
    \caption{Two isomorphic but not equal quivers. An isomorphism is given by exchanging 1 and 2.}
    \label{fig-non-equal-example}
\end{figure}
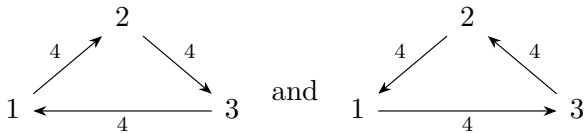

We will use the following properties of quiver mutation.

\begin{proposition}
    Let $Q$ be a quiver with a vertex $i$.
    \begin{itemize}
        \item We have $Q = \mu_i \circ \mu_i(Q)$; mutation is an involution.
        \item If $S$ is a subquiver of $Q$ which contains $i$, then $\mu_i(S)$ is a subquiver of $\mu_i(Q)$.
        \item If $i$ is a sink or source vertex in $Q$, then $\mu_i(Q)$ only reverses the arrows incident to $i$ (we do not add any new arrows, there are no oriented $2$-cycles to delete).
    \end{itemize}
\end{proposition}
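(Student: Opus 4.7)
The plan is to verify each of the three bullets directly from the three-step mutation procedure. All three statements are local to the vertex $i$, so I would work throughout with the signed count $b_{uv}$ equal to the number of arrows from $u$ to $v$ minus the number from $v$ to $u$, for vertices $u, v \neq i$. This encoding silently absorbs step (3), since cancellation of $2$-cycles simply keeps $b_{uv}$ as a single integer rather than two opposing arrow counts.

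For the involution (bullet 1), I would rewrite the three-step procedure as an update rule on the numbers $b_{uv}$ together with the list of arrows incident to $i$: step (1) adds $\max(b_{ui},0)\cdot\max(b_{iv},0)$ to $b_{uv}$ (and symmetrically subtracts the quantity $\max(-b_{ui},0)\cdot\max(-b_{iv},0)$), step (2) negates every $b_{ui}$, and step (3) is built into the signed-count encoding. This reproduces the standard skew-symmetric matrix mutation formula. A short case split on the signs of $b_{ui}$ and $b_{iv}$ then shows that applying this update twice returns $b_{uv}$ unchanged. The arrows incident to $i$ are reversed by step (2) in the first mutation and reversed again in the second.

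For the subquiver property (bullet 2), I would argue by locality of the three steps: each step only inspects or produces arrows with both endpoints in the current vertex set. Since $S$ is a full subquiver of $Q$, it contains every arrow of $Q$ between its vertices, and in particular every length-$2$ path $u \to i \to v$ in $Q$ with $u,v \in S$ is a path in $S$. Hence step (1) on $S$ produces the same new arrows among vertices of $S$ as step (1) on $Q$, and steps (2) and (3) respect the subquiver in the same way. For the sink/source property (bullet 3), if $i$ is a sink then no arrow leaves $i$ in $Q$, so no directed path of length $2$ passes through $i$, making steps (1) and (3) vacuous; only step (2) acts. The source case is symmetric.

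The only mild obstacle is in the involution argument, where step (3)'s deletion of $2$-cycles could in principle destroy information needed to recover $Q$. Passing to the signed count $b_{uv}$ sidesteps this concern automatically, and with that encoding in hand the remaining two bullets follow essentially by inspection.
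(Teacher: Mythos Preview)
Your verification is correct. The paper states this proposition without proof, treating these three facts as standard background on quiver mutation; your direct check from the three-step definition (via the signed count $b_{uv}$, which is exactly the $B$-matrix encoding the paper introduces in Definition~\ref{def:b-matrix}) is precisely how one would justify it.
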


\begin{definition}
A mutation sequence $\mathbf{i} = i_1, \ldots, i_m$ of size $|\mathbf{i}| = m$ is a sequence of vertices (of some quiver~$Q$). 
We say a mutation sequence $\mathbf{i}$ is \emph{reduced} if each consecutive pair of vertices are distinct (i.e.\ $i_k \neq i_{k \pm 1}$ for $1 < k < m$).
The \emph{reduction} of a mutation sequence $\mathbf i$ is the reduced subsequence remaining after repeatedly canceling any adjacent duplicates $i, i$ in $\mathbf i$. 
To mutate $Q$ at a mutation sequence, mutate at each vertex in $\mathbf{i}$ in order:
\[
\mu_{\mathbf i}(Q) := \mu_{i_m} \circ \mu_{i_{m-1}} \circ \cdots \circ \mu_{i_1}(Q).
\]

We will denote the reversed mutation sequence by~$\mathbf i^{-1} = i_m, \ldots, i_1$.
If we have two sequences of vertices $\mathbf i = i_1, \ldots, i_m, \mathbf j = j_1, \ldots, j_\ell$, we denote the concatenated sequence of vertices $\mathbf i \mathbf j = i_1, \ldots, i_m, j_1, \ldots, j_\ell$.

Given a quiver $Q$ and a mutation sequence $\mathbf{i}$ of vertices in $Q$, we use $\big [ Q \big ]_{\mathbf{i}}$ to denote the list of quivers $Q, \mu_{i_1}(Q), \ldots, \mu_{i_m} \circ \mu_{i_{m-1}} \circ \cdots \circ \mu_{i_1}(Q)$. 
\end{definition}

\begin{definition}
\label{def:mugraph}
The \emph{mutation graph} of a mutation class $\big [ Q \big ]$ has a vertex for each quiver in $\big [ Q \big ]$ and an (undirected) edge $R \mutation{i} S$ labeled $i$ between $R$ and $S$ whenever $\mu_i(R) = S$. A \emph{mutation cycle} is a closed walk where no two consecutive vertices (quivers) and edges (mutations) coincide. 
We will generally denote a mutation cycle by some initial quiver $Q$ and a reduced mutation sequence $\mathbf{i}$ such that $\mu_{\mathbf i}(Q) = Q.$
If~$Q$ has no isolated vertices, then any reduced mutation sequence $\mathbf i$ such that $\mu_{\mathbf i}(Q)=Q$ is a mutation cycle.

A mutation cycle is \emph{simple} if it only visits each quiver once; that is, all quivers in $\big [ Q \big ]_{\mathbf i}$ are distinct except for $Q = \mu_{\mathbf i}(Q)$ (cf.\ \cite[Definition~5.1]{LMC}). 

\end{definition}

\begin{definition}
Given a quiver $Q$, we define $2$ additional quivers: the \emph{framed extension} $\widehat{Q}$ and the \emph{coframed extension} $\widecheck{Q}$. 
To construct either extension, we first add a new `frozen' vertex $i'$ for every vertex $i$ in $Q$. 
We then add a new arrow $i \rightarrow i'$ (resp. $i' \rightarrow i$) for each new vertex $i'$ to form $\widehat{Q}$ (resp. $\widecheck{Q}$). See Figure~\ref{fig:3-cycle framing}.

We call the original, non-frozen, vertices \emph{mutable}.
\end{definition}

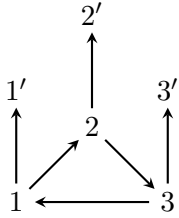
\begin{figure}[ht]
        \centering
        \vspace{-5pt}
        \begin{tikzpicture}
\path (1,0) node {1};
\path (2,1) node {2};
\path (3,0) node {3};

\path (1,1.5) node {$1'$};
\path (2,2.5) node {$2'$};
\path (3,1.5) node {$3'$};

\draw[black, thick, -{stealth}, shorten >=7pt, shorten <= 7pt ] (1,0) -- (1,1.5);
\draw[black, thick, -{stealth}, shorten >=7pt, shorten <= 7pt ] (2,1) -- (2,2.5);
\draw[black, thick, -{stealth}, shorten >=7pt, shorten <= 7pt ] (3,0) -- (3,1.5);

\draw[black, thick, -{stealth}, shorten >=7pt, shorten <= 7pt ] (1,0) to (2,1);
\draw[black, thick, -{stealth}, shorten >=7pt, shorten <= 7pt ] (2,1) to  (3,0);
\draw[black, thick, {stealth}-, shorten >=7pt, shorten <= 7pt ] (1,0) to  (3,0);
\end{tikzpicture}
    \caption{The principle framing of an oriented $3$-cycle.}
    \label{fig:3-cycle framing}
\end{figure}

\begin{definition}
    For a quiver $R$ mutation equivalent to $\widehat Q$, we say a vertex $j$ is \emph{red} (resp. \emph{green}) if for every frozen vertex $i'$ we have $b_{ji'} \leq 0$ (resp. $b_{ji'} \geq 0$).
\end{definition}

\begin{theorem}[Sign-Coherence \cite{MR2629987}]
\label{thm:all red or green}
    For every mutable vertex $j$ in every quiver ${R \in [\widehat Q]}$, $j$ is either red or green.
\end{theorem}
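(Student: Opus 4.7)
The plan is to follow Derksen--Weyman--Zelevinsky \cite{MR2629987} and lift quiver mutation to mutation of quivers with potentials. Fix a generic (non-degenerate) potential $W$ supported on the mutable part of $\widehat Q$, so that every iterated QP-mutation along a sequence $\mathbf i$ is well defined. For each quiver $R=\mu_{\mathbf i}(\widehat Q)$ and each mutable vertex $j$ of $R$, one attaches a \emph{decorated representation} $\mathcal M_j(R)$ of the Jacobian algebra of the mutated QP. These representations refine the $c$-vectors: if one writes $\mathcal M_j(R)=(M^+,M^-)$ with $M^+$ an honest representation and $M^-$ a formal negative part, then the column $(b_{ji'})_{i'\text{ frozen}}$ equals $\underline{\dim}\, M^+ - \underline{\dim}\, M^-$.

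The content of the theorem is now the following structural claim, proved by induction on $|\mathbf i|$: for every mutable $j$ in every $R\in[\widehat Q]$, the decorated representation $\mathcal M_j(R)$ is either \emph{purely positive} ($M^-=0$) or \emph{purely negative} ($M^+=0$). This is exactly sign-coherence of the column $(b_{ji'})_{i'}$, i.e.\ that $j$ is green or red.

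For the base case, $\mathcal M_j(\widehat Q)$ is the negative simple at $j$, whose dimension vector is $-e_j$; this matches the single arrow $j\to j'$ in the framing. For the inductive step, suppose all $\mathcal M_j(R)$ satisfy the dichotomy and we mutate at some vertex $k$. By induction, $\mathcal M_k(R)$ is purely positive or purely negative, which fixes a sign $\varepsilon_k\in\{\pm1\}$. The DWZ mutation rule expresses each $\mathcal M_j(R')$ in terms of $\mathcal M_j(R)$ and $\mathcal M_k(R)$, using the entries $b_{kj}$; the sign $\varepsilon_k$ selects which of the two pieces of the piecewise-linear exchange relation for $c$-vectors is used. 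It is precisely this sign-selection that lets one verify, using DWZ's explicit description of the ``kernel/cokernel'' constructions in their mutation formula, that the new $\mathcal M_j(R')$ inherits the dichotomy: taking direct sums and morphism kernels never mixes positive and negative parts once $\mathcal M_k(R)$ is itself pure.

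The main obstacle is the technical setup for QPs: non-degeneracy of $W$ (so that arbitrarily long mutation sequences are defined), compatibility of QP-mutation with quiver mutation after cancelling $2$-cycles, and well-definedness of the associated decorated representations up to right-equivalence. We would simply quote the relevant results from \cite{MR2629987} rather than redo them. An alternative, conceptually cleaner but requiring heavier machinery, is to invoke the scattering-diagram framework of Gross--Hacking--Keel--Kontsevich: the $c$-vectors are normal vectors to walls of a consistent scattering diagram which is supported in a half-space, so positivity of wall-crossing functions forces sign coherence. Either route yields Theorem~\ref{thm:all red or green}.
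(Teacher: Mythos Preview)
The paper does not prove Theorem~\ref{thm:all red or green} at all: it is stated with a citation to Derksen--Weyman--Zelevinsky \cite{MR2629987} and then used as a black box throughout. So there is nothing to compare your argument against; any correct proof you supply goes strictly beyond what the paper does.

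Your plan to invoke the QP machinery of \cite{MR2629987} is the right citation, but the sketch you give is not quite how DWZ actually argue. Their proof does not track a family of decorated representations $\mathcal M_j(R)$ whose ``purely positive/purely negative'' dichotomy is preserved under mutation in the way you describe. Rather, they show (via the $E$-invariant) that the $F$-polynomials attached to a non-degenerate QP always have constant term~$1$; this is equivalent to sign-coherence of $g$-vectors, and sign-coherence of $c$-vectors then follows by the tropical duality of Nakanishi--Zelevinsky. The identification of a $c$-vector with the (signed) dimension vector of a single decorated representation, which your inductive step leans on, is in the literature a \emph{consequence} of sign-coherence (e.g.\ in the work of N\'ajera Ch\'avez) rather than the mechanism for proving it, so as written your induction risks circularity. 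Your alternative suggestion via the GHKK scattering diagram is a genuine independent proof and would be a safe thing to cite instead. Either way, since the paper treats this as an imported result, simply citing \cite{MR2629987} (or GHKK) with a one-line pointer to the relevant theorem is all that is needed here.
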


\begin{definition}
     A mutation sequence $\mathbf{M}$ is a \emph{reddening sequence} for $Q$ if every mutable vertex of $\mu_{\mathbf{M}}(\widehat{Q})$ is red.
     It is called a \emph{maximal green} sequence if we only mutate at green vertices.
\end{definition}

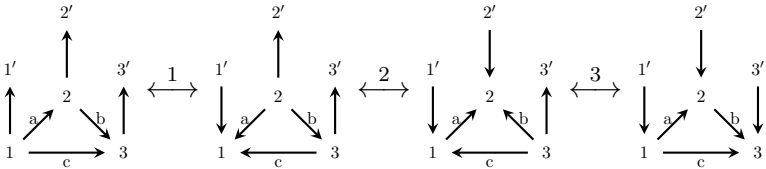
\begin{figure}[ht]
        \vspace{-5pt}
\begin{tikzpicture}[scale=0.75, every node/.style={scale=0.6}]
\path (1,0) node {1};
\path (2,1) node {2};
\path (3,0) node {3};

\path (1,1.5) node {$1'$};
\path (2,2.5) node {$2'$};
\path (3,1.5) node {$3'$};

\draw[black, thick, -{stealth}, shorten >=7pt, shorten <= 7pt ] (1,0) -- (1,1.5);
\draw[black, thick, -{stealth}, shorten >=7pt, shorten <= 7pt ] (2,1) -- (2,2.5);
\draw[black, thick, -{stealth}, shorten >=7pt, shorten <= 7pt ] (3,0) -- (3,1.5);

\draw[black, thick, -{stealth}, shorten >=7pt, shorten <= 7pt ] (1,0) to node [above left, swap, inner sep=0pt] {a}  (2,1);
\draw[black, thick, -{stealth}, shorten >=7pt, shorten <= 7pt ] (2,1) to node [above right, swap, inner sep=0pt] {b}  (3,0);
\draw[black, thick, -{stealth}, shorten >=7pt, shorten <= 7pt ] (1,0) to node [below, swap] {c}  (3,0);
\end{tikzpicture}
$\mathrel{\raisebox{1cm}{$\mutation{1}$}}$
\begin{tikzpicture}[scale=0.75, every node/.style={scale=0.6}]
\path (1,0) node {1};
\path (2,1) node {2};
\path (3,0) node {3};

\path (1,1.5) node {$1'$};
\path (2,2.5) node {$2'$};
\path (3,1.5) node {$3'$};

\draw[black, thick, {stealth}-, shorten >=7pt, shorten <= 7pt ] (1,0) -- (1,1.5);
\draw[black, thick, -{stealth}, shorten >=7pt, shorten <= 7pt ] (2,1) -- (2,2.5);
\draw[black, thick, -{stealth}, shorten >=7pt, shorten <= 7pt ] (3,0) -- (3,1.5);

\draw[black, thick, {stealth}-, shorten >=7pt, shorten <= 7pt ] (1,0) to node [above left, swap, inner sep=0pt] {a}  (2,1);
\draw[black, thick, -{stealth}, shorten >=7pt, shorten <= 7pt ] (2,1) to node [above right, swap, inner sep=0pt] {b}  (3,0);
\draw[black, thick, {stealth}-, shorten >=7pt, shorten <= 7pt ] (1,0) to node [below, swap] {c}  (3,0);
\end{tikzpicture}
$\mathrel{\raisebox{1cm}{$\mutation{2}$}}$
\begin{tikzpicture}[scale=0.75, every node/.style={scale=0.6}]
\path (1,0) node {1};
\path (2,1) node {2};
\path (3,0) node {3};

\path (1,1.5) node {$1'$};
\path (2,2.5) node {$2'$};
\path (3,1.5) node {$3'$};

\draw[black, thick, {stealth}-, shorten >=7pt, shorten <= 7pt ] (1,0) -- (1,1.5);
\draw[black, thick, {stealth}-, shorten >=7pt, shorten <= 7pt ] (2,1) -- (2,2.5);
\draw[black, thick, -{stealth}, shorten >=7pt, shorten <= 7pt ] (3,0) -- (3,1.5);

\draw[black, thick, -{stealth}, shorten >=7pt, shorten <= 7pt ] (1,0) to node [above left, swap, inner sep=0pt] {a}  (2,1);
\draw[black, thick, {stealth}-, shorten >=7pt, shorten <= 7pt ] (2,1) to node [above right, swap, inner sep=0pt] {b}  (3,0);
\draw[black, thick, {stealth}-, shorten >=7pt, shorten <= 7pt ] (1,0) to node [below, swap] {c}  (3,0);
\end{tikzpicture}
$\mathrel{\raisebox{1cm}{$\mutation{3}$}}$
\begin{tikzpicture}[scale=0.75, every node/.style={scale=0.6}]
\path (1,0) node {1};
\path (2,1) node {2};
\path (3,0) node {3};

\path (1,1.5) node {$1'$};
\path (2,2.5) node {$2'$};
\path (3,1.5) node {$3'$};

\draw[black, thick, {stealth}-, shorten >=7pt, shorten <= 7pt ] (1,0) -- (1,1.5);
\draw[black, thick, {stealth}-, shorten >=7pt, shorten <= 7pt ] (2,1) -- (2,2.5);
\draw[black, thick, {stealth}-, shorten >=7pt, shorten <= 7pt ] (3,0) -- (3,1.5);

\draw[black, thick, -{stealth}, shorten >=7pt, shorten <= 7pt ] (1,0) to node [above left, swap, inner sep=0pt] {a}  (2,1);
\draw[black, thick, -{stealth}, shorten >=7pt, shorten <= 7pt ] (2,1) to node [above right, swap, inner sep=0pt] {b}  (3,0);
\draw[black, thick, -{stealth}, shorten >=7pt, shorten <= 7pt ] (1,0) to node [below, swap] {c}  (3,0);
\end{tikzpicture}
\caption{A reddening sequence of a $3$-vertex quiver.}
\label{fig:source seq}
\end{figure}

\begin{definition}
\label{def:acyclic}
A quiver $Q$ is \emph{acyclic} if it is acyclic as a directed graph. That is, there are no oriented cycles in $Q$.
A \emph{source sequence} of $Q$ is a mutation sequence $\mathbf{S} = v_1, \ldots, v_k$ so that $v_i \rightarrow v_j$ implies $ i < j$ for some $k \geq 1$.
A \emph{reddening source sequence} (or \emph{RSS}) is any source sequence which is also a reddening sequence. 
If $Q$ is complete this sequence is unique.
\end{definition}

\begin{example}
\label{eg:source seq}
Let $Q$ be an acyclic quiver. 
If we mutate at a source sequence $\mathbf S$ of $Q$, then the first mutation at each vertex will be a source mutation in $\widehat{Q}$.
Thus any source sequence which mutates at all the vertices of $Q$ once is an RSS of $Q$.
Because each arrow will be reversed twice, we have that $\mu_{\mathbf S}(Q)= Q.$
See Figure~\ref{fig:source seq}. 
\end{example}


\begin{proposition}[{\cite[Proposition 2.10]{MR3250044}}] 
\label{prop:red/green gives iso}
Fix a quiver $Q$. 
Let $R = \mu_{\mathbf{N}}(\widehat Q)$ for some mutation sequence $\mathbf{N}$.
If every mutable vertex of $R$ is green then $R$ is isomorphic to~$\widehat Q$.
If every mutable vertex of $R$ is red then $R$ is isomorphic to~$\widecheck Q$.
Further, in both cases the isomorphism fixes the frozen vertices of $Q$.
\end{proposition}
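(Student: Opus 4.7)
The plan is to leverage sign-coherence (Theorem~\ref{thm:all red or green}) together with the $C$- and $G$-matrix formalism of Nakanishi--Zelevinsky tropical duality. For any $R=\mu_{\mathbf N}(\widehat Q)$, let $C_R$ denote the integer matrix whose $(i,j)$ entry is $b_{ji'}$ recorded in $R$; equivalently, the $j$-th column of $C_R$ is the $c$-vector of mutable vertex~$j$. By construction $C_{\widehat Q}=I$, and sign-coherence says each column of $C_R$ is entrywise $\geq 0$ (``green'') or entrywise $\leq 0$ (``red'').

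First I would invoke Nakanishi--Zelevinsky tropical duality to produce a companion $G$-matrix $G_R$ whose columns are likewise sign-coherent and which satisfies $C_R\,G_R^{\,T}=I$. Suppose now that every mutable vertex of $R$ is green, so $C_R$ is non-negative. The claim I want to extract is that $C_R$ must be a permutation matrix $P_\sigma$. The argument is elementary given the identity $C_R\,G_R^{\,T}=I$: for each $j$, the sign-coherent column of $G_R$ paired against the non-negative $C_R$ in the equations $\sum_k (C_R)_{ik}(G_R)_{jk}=\delta_{ij}$ forces each row of $C_R$ to be supported on a single entry equal to $1$ (the $n=2$ case reduces to a short case-check on the inverse of a non-negative unimodular matrix, and the general case follows by the same sign-cancellation mechanism combined with $|\det C_R|=1$). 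Hence $C_R=P_\sigma$ for some $\sigma\in S_n$.

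Having $C_R=P_\sigma$ means that inside $R$ the only arrows between mutable vertices and the frozen vertices are a single arrow $\sigma(j)\to j'$ for each $j$. Relabeling mutable vertex $j$ of $R$ as $\sigma(j)$ fixes every frozen vertex and matches the mutable--frozen arrows with those of $\widehat Q$ exactly. For the mutable--mutable block, tropical duality gives the parallel conclusion $G_R=P_\sigma$, which encodes that the internal $B$-matrix of $R$ is $P_\sigma^{\,T} B_Q P_\sigma$; so the same relabeling also matches the mutable--mutable arrows. This produces the required isomorphism $R\cong\widehat Q$ fixing frozen vertices.

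The red case is symmetric: applying the same argument to $-C_R$ yields a permutation $\sigma$ with mutable--frozen arrows $j'\to\sigma(j)$, matching $\widecheck Q$ after the same relabeling. The step I expect to be the main obstacle is the transition from ``$C_R$ is a permutation matrix'' to an honest quiver isomorphism of the full extended quiver, since the mutable--mutable block must also be identified; the cleanest way is to cite the tropical-duality framework, which governs precisely how the $B$-matrix co-evolves with the $C$-matrix. Without that machinery one would have to track the internal exchange matrix inductively along the mutation sequence $\mathbf N$, which is considerably more involved.
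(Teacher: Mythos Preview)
The paper does not supply its own proof of this proposition; it is quoted from \cite{MR3250044} as background, so there is no in-paper argument to compare against. Your sketch via sign-coherence plus Nakanishi--Zelevinsky tropical duality is the standard modern route and is correct in outline.

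Two places merit tightening. First, the deduction that $C_R$ is a permutation matrix needs no case-check: from $C_R\ge 0$ and row sign-coherence of $C_R^{-1}=G_R^{T}$, any non-positive row of $C_R^{-1}$ paired against the non-negative $C_R$ would give a non-positive diagonal entry of $C_R^{-1}C_R=I$, a contradiction; hence $C_R^{-1}\ge 0$ as well, and a non-negative integer matrix with non-negative integer inverse is a permutation matrix. Second, the step ``$G_R=P_\sigma$ encodes $B_R=P_\sigma^{T}B(Q)P_\sigma$'' is precisely where a specific citation is required: it does not follow from $C_R=G_R=P_\sigma$ alone, but from an additional identity in the tropical-duality package (equivalently, from the fact that $g$-vectors determine cluster variables, so the seed at $R$ is a permutation of the initial seed). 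You acknowledge this honestly in your final paragraph; just be sure to name the exact result. Note also that the duality relation you write as $C_RG_R^{T}=I$ may, depending on conventions, involve the $C$-matrix of the opposite quiver, and that your $C$-matrix convention (columns indexed by mutable vertices) is the transpose of the paper's Definition~\ref{def:C-mats}; both are harmless here but worth flagging.
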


As an immediate corollary of Proposition~\ref{prop:red/green gives iso}:

\begin{corollary}[\cite{MR3250044}] 
\label{cor:reddening gives iso}
Let $Q$ be a quiver with reddening sequence $\mathbf{N}$.
Then $Q$ is isomorphic to $\mu_{\mathbf{N}}(Q)$.
\end{corollary}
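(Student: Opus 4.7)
The plan is to deduce this corollary directly from Proposition~\ref{prop:red/green gives iso} by passing to the mutable subquivers. First I would set $R = \mu_{\mathbf N}(\widehat Q)$; since $\mathbf N$ is a reddening sequence, every mutable vertex of $R$ is red, so by Proposition~\ref{prop:red/green gives iso} there is an isomorphism $\varphi: R \to \widecheck Q$ that fixes every frozen vertex.

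Next I would observe that mutation at a mutable vertex commutes with the operation of deleting all frozen vertices (i.e., taking the mutable subquiver). Indeed, deleting frozen vertices yields a full subquiver that contains every mutable vertex at which we mutate, and the rules for mutation at $i$ only look at arrows into and out of $i$ within the ambient quiver; the arrows involving frozen vertices are altered, but never produce new arrows between two mutable vertices (and none of the mutable-mutable arrows are deleted by $2$-cycle cancellation, since frozen vertices are only involved with arrows to/from their corresponding mutable vertex). Therefore the mutable subquiver of $R = \mu_{\mathbf N}(\widehat Q)$ is exactly $\mu_{\mathbf N}(Q)$, and the mutable subquiver of $\widecheck Q$ is $Q$.

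Finally, since $\varphi$ fixes every frozen vertex, it must send mutable vertices to mutable vertices, and its restriction to the mutable subquivers is an isomorphism $\mu_{\mathbf N}(Q) \cong Q$, as desired.

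The only subtle point is the commutation claim in the middle step; once that is in hand, the corollary is immediate. I would expect the authors either to cite this commutation as standard folklore on the behavior of frozen vertices under mutation, or to leave it as an implicit consequence of the definition of quiver mutation restricted to the full mutable subquiver.
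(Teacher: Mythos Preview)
Your argument is correct and is exactly the unpacking of what the paper means by ``an immediate corollary of Proposition~\ref{prop:red/green gives iso}.'' The only point you flag as subtle---that mutation at mutable vertices commutes with passing to the full mutable subquiver---is recorded in the paper as the second bullet of Proposition~2.3 (mutation commutes with restriction to full subquivers containing the mutated vertex), so you may simply cite that rather than rearguing it.
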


\begin{definition}
Given a quiver $Q$, the associated permutation $\sigma$ to a reddening sequence $\mathbf{S}$ is the unique permutation of the mutable vertices of $Q$ such that:
$$\sigma(\widecheck Q) = \mu_{\mathbf{S}}(\widehat Q).$$ 
\end{definition}

We can construct many examples of reddening sequences (of arbitrarily large length) with the following theorem.

\begin{theorem}[{\cite[Theorem~18]{MR3512669}}] 
\label{thm:muller redd}
If $\mathbf{S}$ is a reddening sequence of a quiver $Q$ with associated permutation $\sigma$ and $\mathbf{M}$ is a sequence of mutations then the mutation sequence $\mathbf{M}^{-1} \mathbf{S} \sigma(\mathbf{M})$ is a reddening sequence of $\mu_{\mathbf{M}}(Q)$.
We call the mutation sequence $\mathbf{M}^{-1} \mathbf{S} \sigma(\mathbf{M})$ the \emph{conjugation} of $\mathbf{S}$ by $\mathbf{M}$
(Note that conjugation depends on the permutation $\sigma$).
\end{theorem}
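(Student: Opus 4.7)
I will prove this by induction on $|\mathbf{M}|$, reducing to the case of a single mutation. The base case $|\mathbf{M}|=0$ is trivial since $\mathbf{M}^{-1}\mathbf{S}\sigma(\mathbf{M}) = \mathbf{S}$, and the inductive step is the case $\mathbf{M} = k$ (a single vertex). The general case then follows by iterating the single-mutation case, provided the associated permutation is preserved at each step, which I will also establish. Indeed, if at each step the new reddening sequence has the same associated permutation $\sigma$, then conjugating along $\mathbf{M} = k_1\cdots k_m$ in sequence assembles exactly the sequence $\mathbf{M}^{-1}\mathbf{S}\sigma(\mathbf{M}) = k_m\cdots k_1\,\mathbf{S}\,\sigma(k_1)\cdots \sigma(k_m)$.

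For the single-mutation case, set $Q' = \mu_k(Q)$ and track the sequence
\[
\widehat{Q'} \;\xrightarrow{\mu_k}\; R_1 \;\xrightarrow{\mu_{\mathbf{S}}}\; R_2 \;\xrightarrow{\mu_{\sigma(k)}}\; R_3.
\]
A direct computation from the mutation rules shows that $R_1$ has mutable part $Q$, with the only frozen arrow at $k$ being the incoming $k' \to k$ (so $k$ is red), and at each $j\ne k$ the frozen arrows are $j\to j'$ together with one $j\to k'$ for each arrow $j\to k$ in $Q'$ (so $j$ is green). Thus $R_1$ is ``almost green'' with $k$ as the unique red vertex. The central assertion is that $R_2 = \mu_{\mathbf{S}}(R_1)$ is ``almost red'': every mutable vertex of $R_2$ is red except $\sigma(k)$, which is green. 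Granted this, the mutation at $\sigma(k)$ flips that vertex from green to red; one checks using the $C$-matrix mutation formula together with sign-coherence (Theorem~\ref{thm:all red or green}) that the other vertices also remain red. Hence every mutable vertex of $R_3$ is red, so $k\cdot\mathbf{S}\cdot\sigma(k)$ is a reddening sequence of $Q'$, and a direct comparison of the frozen arrows of $R_3$ with those of $\sigma(\widecheck{Q'})$ confirms that the associated permutation is again~$\sigma$.

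The main obstacle is the central assertion about $R_2$. I would prove it by propagating the columns of the frozen-to-mutable matrix through $\mathbf{S}$, comparing with the known evolution starting from $\widehat{Q}$ (which, by the defining property of $\sigma$, ends at $\sigma(\widecheck{Q})$, where each $c$-vector is minus a standard basis vector). The key observation is that the ``extra'' positive $e_k$-contributions in the initial $C$-matrix of $R_1$ (compared with the identity $C$-matrix of $\widehat{Q}$) propagate through the mutation formula in a controlled way: sign-coherence forces every column to remain sign-consistent at each step, so these extra contributions can never split a red column into mixed signs. By the end of $\mathbf{S}$ they must coalesce into the $\sigma(k)$-th column, flipping its expected $-e_k$ into a non-negative vector and leaving all other columns non-positive as in the standard reddening of $\widehat{Q}$. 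The cleanest rigorous route is via the separation formula of Fomin--Zelevinsky, which expresses $c$-vectors as tropical evaluations of $F$-polynomials depending only on the mutable exchange matrix and mutation sequence; this reduces the assertion to an algebraic identity that can be verified by inspecting the two initial conditions.
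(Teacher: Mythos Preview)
The paper does not give its own proof of this statement; it is quoted verbatim from \cite[Theorem~18]{MR3512669} and used as a black box. So there is no ``paper's proof'' to compare against, and I am evaluating your argument on its own merits.

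Your reduction to the case $|\mathbf{M}|=1$ and the bookkeeping for iterating are fine. The genuine gap is exactly where you flag it: the ``central assertion'' that $R_2=\mu_{\mathbf S}(R_1)$ is almost red, with $\sigma(k)$ the unique green vertex. Your justification (``extra positive $e_k$-contributions propagate in a controlled way and must coalesce into the $\sigma(k)$-th column'') is not a proof. Sign-coherence tells you each $c$-vector stays sign-definite, but it does not tell you \emph{which} sign, nor why the perturbation concentrates at $\sigma(k)$ rather than elsewhere. The difficulty is that the column of the $C$-matrix at the frozen vertex $k'$ in $R_1$ has \emph{mixed} signs initially ($-1$ in row $k$, positive entries in some other rows), so the linear transformation law for coefficients (Theorem~\ref{thm:extensions linear comb of framing} in this paper) does not apply to it directly. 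Invoking ``the separation formula of Fomin--Zelevinsky'' is a pointer to the right toolbox, but you have not actually carried out the tropical computation, and it is not a one-line check. The same issue recurs in your final step: ``one checks using the $C$-matrix mutation formula together with sign-coherence that the other vertices also remain red'' after mutating at $\sigma(k)$ is again an assertion, not a verification---sign-coherence allows either sign.

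Muller's original proof avoids this direct column-tracking entirely. It uses the tropical duality between $C$- and $G$-matrices (Nakanishi--Zelevinsky) together with the characterization of all-red seeds via $G$-matrices, which gives a symmetric relationship between reddening sequences for $Q$ and for $\mu_{\mathbf M}(Q)$ without needing to analyze intermediate $C$-matrices of a non-principally-framed quiver. If you want to make your direct approach rigorous, the cleanest route is to identify $R_1$ with a quiver in $\big[\widehat{Q'}\big]$ whose $C$-matrix you can compute via Theorem~\ref{thm:C matrices determine}, then argue backwards from the (known) endpoint $\sigma(\widecheck{Q'})$ rather than forwards from $R_1$.
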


\begin{example}
\label{eg:red seq}
Let $Q$ be the leftmost quiver on the mutation sequence in Figure~\ref{fig:oneStep}.
By Theorem~\ref{thm:muller redd}, with $\mathbf{M} = 2$ and using the RSS described in Example~\ref{eg:source seq} the mutation sequence $2,1,2,3,2$ is a reddening sequence for $Q$.
We also see that the leftmost and rightmost quivers in Figure~\ref{fig:oneStep} are isomorphic (equal, in fact), in agreement with Corollary~\ref{cor:reddening gives iso}. 
\end{example}

\begin{figure}[ht]
    \centering
    \vspace{-10pt}
    \includegraphics[width=0.95\linewidth]{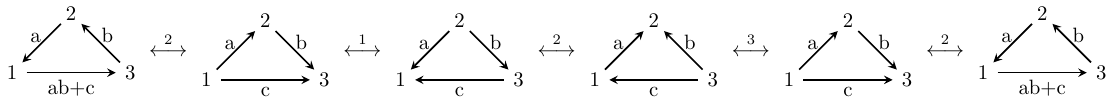}
    \caption{Another reddening sequence of a $3$-vertex quiver.}
    \label{fig:oneStep}
\end{figure}

\begin{definition}[{Cf.\ \cite[Definition 3.7]{MR2640929}, \cite[Theorem 4.4]{MR4186974}}]
\label{def:tri extension}
Given two quivers $H$ and $T$, a \emph{triangular extension} of $H$ and $T$ is another quiver $Q$ formed by taking the disjoint union of $H$ and $T$, and then adding any number of arrows oriented $t \rightarrow h$ for $t \in T,h \in H$.
We form a $|T| \times |H|$ matrix $A = (a_{th})$, where $a_{th}$ is the number of arrows from $t$ to~$h.$ 
We will use the notation $Q = T \stackrel{A}{\rightarrow} H$ for this triangular extension. 
\end{definition}

\begin{remark}
Triangular extensions are also called \emph{direct sums} by some authors (see, for example, \cite{MR4183151, MR3767505, MR3604067}).
The notation $T \stackrel{A}{\rightarrow} H$ for triangular extensions is new.
\end{remark}

\begin{definition}
\label{def:b-matrix}
Let $Q$ be an $n$-vertex quiver.
The associated \emph{$B$-matrix} (or \emph{exchange matrix}) $B(Q)$ is the $n\times n$ skew-symmetric adjacency matrix of $Q$.
Thus the rows and columns of $B(Q)$ are indexed by the vertices of $Q$, and the entries $b_{ij}$ record the number of arrows $i\rightarrow j$ minus the number of arrows $j \rightarrow i$. 
\end{definition}

\begin{example}
The $B$-matrix of a triangular extension $Q = T \stackrel{A}{\rightarrow} H$ is the $|T| + |H|$ block matrix:
$$B(Q) = \begin{pmatrix} B(T) & A \\ -A^T & B(H) \end{pmatrix}.$$
\end{example}

\begin{theorem}[{\cite[Theorem 4.5, Remark 4.6]{MR4029226}}] 
\label{thm:extension red seq}
Suppose that $\mathbf{M}_H$ is a reddening sequence of $H$ and $\mathbf{M}_T$ is a reddening sequence of $T$.
Then any triangular extension $Q = T \stackrel{A}{\rightarrow} H$ has reddening sequence $\mathbf{M}_T\mathbf{M}_H.$
\end{theorem}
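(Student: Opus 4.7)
The plan is to process $\mathbf{M}_T \mathbf{M}_H$ in two phases on $\widehat Q$. Setting $R_1 = \mu_{\mathbf{M}_T}(\widehat Q)$ and $R_2 = \mu_{\mathbf{M}_H}(R_1)$, I want to show every mutable vertex of $R_2$ is red, which is exactly the condition for $\mathbf{M}_T \mathbf{M}_H$ to be a reddening sequence of $Q$.

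For Phase~1, the subquiver of $\widehat Q$ on $T \cup T'$ equals $\widehat T$, and since $\mathbf{M}_T$ only mutates at $T$-vertices, the subquiver property lets this block evolve autonomously; by Corollary~\ref{cor:reddening gives iso} it becomes isomorphic to $\widecheck T$ inside $R_1$. The technical heart of Phase~1 is to show that the $H \cup H'$ subquiver is \emph{undisturbed} (still equal to $\widehat H$) and that the ``mixed'' $C$-matrix entries $b_{t h'_0}$ and $b_{h t'_0}$ stay identically zero throughout $\mathbf{M}_T$. The cleanest way to organize this is through the identity
\[
b_{t h} \;=\; \sum_{s \in T} c^{T}_{t, s'}\, a_{s h},
\]
which I claim holds at every step of $\mathbf{M}_T$, where $c^{T}_{t, s'}$ is the $(t, s')$-entry of the $C$-matrix of the corresponding quiver along the reddening path for $\widehat T$. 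Once this identity is verified, sign-coherence (Theorem~\ref{thm:all red or green}) applied to the row $c^{T}_{t, \cdot}$ together with the hypothesis $a_{s h} \geq 0$ forces the row $(b_{t h})_{h \in H}$ to have a single sign; consequently, for any $h_1, h_2 \in H$ we have $[b_{h_1 t} b_{t h_2}]_+ = [-b_{t h_1} b_{t h_2}]_+ = 0$ and the mutation correction to $b_{h_1 h_2}$ under $\mu_{t}$ vanishes. Parallel easy inductions give $b_{t h'_0} = 0$ and $b_{h t'_0} = 0$ at every step, and together these imply that $b_{h h'_0}$ is also invariant and hence that the $H \cup H'$ subquiver of $R_1$ really is $\widehat H$.

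Phase~2 is the symmetric argument with the roles of $T$ and $H$ interchanged. Since $\mathbf{M}_T$ made $T$ red in $\widehat T$, the signs $\epsilon_t$ are all negative, so the cross arrows of $R_1$ all point from $H$ to $T$ with nonnegative multiplicities $a'_{h t}$; the quiver $R_1$ therefore has exactly the shape needed to re-run the Phase~1 analysis with the roles of $T$ and $H$ and the matrix $A$ replaced by $(a'_{h t})$. This yields the analogous identity $b_{h t} = \sum_{h_0} c^{H}_{h, h'_0}\, a'_{h_0 t}$ along $\mathbf{M}_H$, keeps the $T \cup T'$ subquiver fixed at $\widecheck T$, and makes the $H \cup H'$ subquiver evolve into $\widecheck H$. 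The mixed $C$-blocks remain zero by the same inductions as in Phase~1. Combining everything, every row of the $C$-matrix of $R_2$ is nonpositive, so every mutable vertex of $R_2$ is red.

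The main obstacle is establishing the identity $b_{t h} = \sum_s c^{T}_{t, s'}\, a_{s h}$ across $\mathbf{M}_T$. Its inductive step amounts to commuting the nonlinear positive-part operator with a linear combination $\sum_s (\cdot)\, a_{s h}$, which is precisely where sign-coherence of the $c$-vectors of $\widehat T$ is essential: at the mutation step at $t_k$, sign-coherence guarantees that every summand of $[b_{t t_k}\, c^{T}_{t_k, s'}\, a_{s h}]_+$ has the same sign in $s$, so the bracket distributes through the sum. Once this identity is in hand, all of Phase~1 follows by straightforward inductions, and Phase~2 requires nothing new beyond a symmetric relabeling.
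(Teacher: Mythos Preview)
Your proposal is correct and follows essentially the same route as the cited source. The paper itself does not supply a proof of this statement (it is quoted from \cite{MR4029226}), but the central identity you establish, $b_{th}=\sum_{s} c^{T}_{t,s'}\,a_{sh}$, is exactly Theorem~\ref{thm:extensions linear comb of framing} (also quoted from \cite{MR4029226}); and your two-phase argument---using sign-coherence of the $c$-row of $t_k$ to force $[b_{h_1 t_k}b_{t_k h_2}]_+=0$ so that the $H$-block is untouched by $\mathbf{M}_T$, and then swapping roles---is precisely the mechanism the paper employs in the proof of Proposition~\ref{prop:extensions with equality give cycles}.
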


Note that Theorem~\ref{thm:extension red seq} puts no constraints on the matrix $A$, except that every entry be non-negative.

The following corollary is a consequence of Corollary~\ref{cor:reddening gives iso} and Theorem~\ref{thm:extension red seq}.

\begin{corollary}
\label{cor:extensions give iso cycles}
Let $H,T$ be quivers with respective reddening sequences $\mathbf{M}_H, \mathbf{M}_T$. 
Then the triangular extension $Q = T \stackrel{A}\rightarrow H$ is isomorphic to $\mu_{\mathbf{M}_T\mathbf{M}_H}(Q)$.
\end{corollary}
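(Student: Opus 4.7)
The plan is to chain together exactly the two cited ingredients, with no additional machinery. First I would apply Theorem~\ref{thm:extension red seq} to the triangular extension $Q = T \stackrel{A}{\rightarrow} H$: since $\mathbf{M}_H$ is a reddening sequence for $H$ and $\mathbf{M}_T$ is a reddening sequence for $T$, the theorem concludes that the concatenation $\mathbf{M}_T \mathbf{M}_H$ is a reddening sequence of $Q$. Note that this uses no hypothesis on $A$ beyond the standing assumption that its entries are nonnegative, which is already built into Definition~\ref{def:tri extension}.

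Second, with a reddening sequence for $Q$ now in hand, I would feed this directly into Corollary~\ref{cor:reddening gives iso}: taking $\mathbf{N} = \mathbf{M}_T \mathbf{M}_H$, the corollary asserts that $Q$ is isomorphic to $\mu_{\mathbf{N}}(Q)$, which is precisely the desired conclusion $Q \cong \mu_{\mathbf{M}_T \mathbf{M}_H}(Q)$.

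There is essentially no obstacle to overcome here, which is why the authors call this a corollary. The only subtlety worth checking is that the hypotheses align: Theorem~\ref{thm:extension red seq} produces a reddening sequence for the full quiver $Q$ (as opposed to one for $H$ or $T$ separately), and this is exactly what Corollary~\ref{cor:reddening gives iso} needs as input. Consequently the proof consists of invoking the two results back-to-back, and no induction, case analysis, or computation with the matrix $A$ is required.
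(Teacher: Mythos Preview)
Your proposal is correct and matches the paper's approach exactly: the paper states that this corollary is a consequence of Corollary~\ref{cor:reddening gives iso} and Theorem~\ref{thm:extension red seq}, which is precisely the two-step chaining you describe.
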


\begin{remark}
To the best of our knowledge, Corollary~\ref{cor:extensions give iso cycles} has not appeared in the literature previously. 
However, it is an easy corollary of multiple existing results.
See, in particular, \cite[Theorem~1.1]{MR4183151}.
\end{remark}

\begin{definition}
\label{def:C-mats}
Let $Q$ be a quiver and $\mathbf{M}$ be any mutation sequence.
Then the \emph{$C$-matrix} $C_{\mathbf{M}} = (c_{ij})$ is the $|Q| \times |Q|$ adjacency matrix such that $c_{ij}$ is the number of arrows from the mutable vertex $i$ to the frozen vertex $j'$ in $\mu_{\mathbf{M}}(\widehat{Q})$.
\end{definition}

\begin{remark}
Note that our convention for the $C$-matrix differs from \cite{MR4029226} (but agrees with the conventions of \cite{ervin2024unrestrictedredsizesigncoherence, MR4537352}). 
\end{remark}

\begin{example}
The four quivers depicted in Figure~\ref{fig:source seq} have $C$-matrices:
$$\begin{pmatrix} 1 & 0 & 0 \\ 0 & 1 & 0 \\ 0 & 0 & 1\end{pmatrix}, \begin{pmatrix} -1 & 0 & 0 \\ 0 & 1 & 0 \\ 0 & 0 & 1\end{pmatrix}, \begin{pmatrix} -1 & 0 & 0 \\ 0 & -1 & 0 \\ 0 & 0 & 1\end{pmatrix}, \begin{pmatrix} -1 & 0 & 0 \\ 0 & -1 & 0 \\ 0 & 0 & -1\end{pmatrix}.$$
\end{example}

\begin{theorem}[\cite{MR4092846}] 
\label{thm:C matrices determine}
Let ${\mathbf{M}}, {\mathbf{N}}$ be mutation sequences for a quiver~$Q$.
If $C_{\mathbf{M}} = C_{\mathbf{N}}$ then $\mu_{\mathbf{M}}(\widehat Q) = \mu_{\mathbf{N}}(\widehat Q)$.
\end{theorem}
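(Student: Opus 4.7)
The plan is to reduce the equality of framed quivers $\mu_{\mathbf{M}}(\widehat{Q}) = \mu_{\mathbf{N}}(\widehat{Q})$ to an equality of only the mutable-to-mutable parts of their $B$-matrices. First, I would record the block decomposition
\[
B(\mu_{\mathbf{M}}(\widehat{Q})) = \begin{pmatrix} B_{\mathbf{M}} & C_{\mathbf{M}} \\ -C_{\mathbf{M}}^{T} & 0 \end{pmatrix},
\]
where $B_{\mathbf{M}}$ is the mutable-to-mutable block and $C_{\mathbf{M}}$ is the $C$-matrix from Definition~\ref{def:C-mats}. The frozen-to-frozen block stays identically zero at every stage, because a mutation at a mutable vertex $i$ could only create a frozen-to-frozen arrow along a length-two path $u' \to i \to v'$ with both $u',v'$ frozen; but by sign-coherence (Theorem~\ref{thm:all red or green}) the mutable vertex $i$ is either red or green, so all its arrows to frozen vertices point the same direction and no such path exists. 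The upper-right block is $C_{\mathbf{M}}$ by definition, and the lower-left block is determined from it.

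Since the mutable-to-frozen block already equals $C_{\mathbf{M}}$ by definition, the theorem reduces to the claim that $C_{\mathbf{M}} = C_{\mathbf{N}}$ forces $B_{\mathbf{M}} = B_{\mathbf{N}}$. For this I would invoke the Nakanishi-Zelevinsky tropical duality between $C$-matrices and $G$-matrices, which in the skew-symmetric setting gives $G_{\mathbf{M}}^{T} C_{\mathbf{M}} = I$, together with the companion conjugation identity expressing $B_{\mathbf{M}}$ in terms of $B(Q)$ and $G_{\mathbf{M}}$ (schematically, $B_{\mathbf{M}} = G_{\mathbf{M}}^{-1} B(Q) G_{\mathbf{M}}$ in the skew-symmetric case). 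Chaining these identities presents $B_{\mathbf{M}}$ as a function of $B(Q)$ and $C_{\mathbf{M}}$ alone, so the hypothesis $C_{\mathbf{M}} = C_{\mathbf{N}}$ immediately yields $B_{\mathbf{M}} = B_{\mathbf{N}}$, completing the reduction.

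The main obstacle is the tropical duality and the accompanying conjugation formula. Both are proved by induction on the length $|\mathbf{M}|$: the base case $\mathbf{M} = \emptyset$ is immediate ($C_{\emptyset} = I$, $G_{\emptyset} = I$, $B_{\emptyset} = B(Q)$), and the inductive step is a direct calculation using the mutation rules for $B$-, $C$-, and $G$-matrices, with sign-coherence (Theorem~\ref{thm:all red or green}) used to resolve the case splits in the piecewise mutation formulas. Since these identities are foundational results in the cluster algebra literature, I would quote them rather than reproduce the induction in full; in particular, the tropical-duality identity and conjugation formula together are strong enough that no additional combinatorial input about the sequences $\mathbf{M}$ and $\mathbf{N}$ is required.
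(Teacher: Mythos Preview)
The paper does not prove this theorem at all: it is stated with the citation \cite{MR4092846} and immediately followed by the one-line gloss ``That is, $C$-matrices determine the quiver,'' with no argument given. It is used purely as a black box (in the remark after Definition~\ref{def:distinguishing} and in the proof of Theorem~\ref{thm:distinguishing minimal}). So there is no ``paper's own proof'' to compare your proposal against.

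Your outline is nonetheless the standard route and is essentially correct. Two small remarks. First, your schematic conjugation formula is slightly off: in the skew-symmetric case the identity coming from $G_{\mathbf M} B_{\mathbf M} = B(Q)\, C_{\mathbf M}$ together with the Nakanishi--Zelevinsky duality $G_{\mathbf M}^{T} C_{\mathbf M} = I$ gives
\[
B_{\mathbf M} \;=\; C_{\mathbf M}^{T}\, B(Q)\, C_{\mathbf M},
\]
not $G_{\mathbf M}^{-1} B(Q) G_{\mathbf M}$ (the latter need not even be skew-symmetric). You flagged your formula as ``schematic,'' and the corrected version still expresses $B_{\mathbf M}$ as a function of $B(Q)$ and $C_{\mathbf M}$ alone, so your reduction goes through unchanged. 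Second, your sign-coherence argument for the vanishing of the frozen--frozen block is exactly right and is the only place Theorem~\ref{thm:all red or green} is genuinely needed in the block decomposition step.
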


That is, $C$-matrices determine the quiver (after choosing an initial framed quiver).

\begin{theorem}[{\cite[Lemma 3.1, Theorem 3.2]{MR4029226}}] 
\label{thm:extensions linear comb of framing}
Let $Q = T \stackrel{A}{\rightarrow} H$ be a triangular extension of two quivers $H,T$.
Let $M$ be a mutation sequence of vertices in $T$.
Then the adjacency matrix between vertices of $T$ and vertices of $H$ in $\mu_{\textbf{M}}(Q)$ is the $|T| \times |H|$ matrix $C_{\textbf{M}}A$. 
\end{theorem}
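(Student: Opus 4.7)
The plan is to prove the theorem by induction on the length $m$ of the mutation sequence $\mathbf{M}$. The base case $m=0$ is immediate: $C_\emptyset = I$ and the initial $T$-to-$H$ adjacency matrix of $Q = T \stackrel{A}{\rightarrow} H$ is exactly $A$ by the definition of a triangular extension, so $C_\emptyset A = A$ as required.

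For the inductive step, I would fix a current quiver $R = \mu_{\mathbf M}(Q)$. Because the mutations all occur at vertices of $T$, the $B$-matrix of $R$ has block form
\[
B(R) = \begin{pmatrix} B_{\mathbf M}^T & D \\ -D^T & B(H) \end{pmatrix},
\]
where $B_{\mathbf M}^T$ is the $B$-matrix of $\mu_{\mathbf M}(T)$ and, by the inductive hypothesis, $D = C_{\mathbf M} A$. I would then append a mutation at a vertex $t \in T$ and compare the updated $T$-to-$H$ block with $C_{\mathbf M t} A$. The row $i=t$ is immediate, since both the $B$-matrix and $C$-matrix mutation rules simply negate row $t$. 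For rows $i \neq t$, writing out the $B$-matrix mutation formula gives
\[
D^{\mathrm{new}}_{ij} = D_{ij} + \tfrac{1}{2}\bigl(|(B_{\mathbf M}^T)_{it}|\, D_{tj} + (B_{\mathbf M}^T)_{it}\, |D_{tj}|\bigr),
\]
while expanding $(C_{\mathbf M t} A)_{ij}$ using the $C$-matrix mutation rule and the inductive hypothesis produces the same expression \emph{except} that $|D_{tj}| = |(C_{\mathbf M}A)_{tj}|$ is replaced by $\sum_k |(C_{\mathbf M})_{tk}|\, A_{kj}$.

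The crux of the proof, and essentially its only nontrivial step, is the identity
\[
\sum_k |(C_{\mathbf M})_{tk}|\, A_{kj} = \bigl|(C_{\mathbf M}A)_{tj}\bigr|.
\]
This is exactly where sign-coherence (Theorem~\ref{thm:all red or green}) enters: for fixed $t$, all entries $(C_{\mathbf M})_{tk}$ share a common sign $\epsilon_t \in \{\pm 1\}$, and since $A_{kj} \geq 0$ by the definition of a triangular extension, we may pull $\epsilon_t$ outside each sum and see the two sides agree. Conceptually, sign-coherence is what linearizes the otherwise-nonlinear mutation rule when the off-diagonal block $A$ is componentwise nonnegative. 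I expect the main obstacle to be not conceptual but rather careful bookkeeping of the mutation formulas for the $B$-matrix block and the $C$-matrix side by side; once they are written out, sign-coherence closes the induction in a single line.
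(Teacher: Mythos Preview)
Your induction is correct: the only nontrivial step is exactly the one you isolate, and row sign-coherence of $C_{\mathbf M}$ together with $A\ge 0$ gives $\sum_k |(C_{\mathbf M})_{tk}|A_{kj}=|(C_{\mathbf M}A)_{tj}|$ as you say. The paper itself does not prove this theorem but simply cites \cite[Lemma~3.1, Theorem~3.2]{MR4029226}, specializing their parameters to $B_1=B(H)$, $B_2=I$, $P=A$ and invoking sign-coherence; your argument is essentially the direct proof that specialization would unwind to.
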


\begin{remark}
Theorem~\ref{thm:extensions linear comb of framing} follows from \cite[Lemma 3.1, Theorem 3.2]{MR4029226}, by setting ${B_1 = B(H)}$, $B_2 = I$ and $P=A$. Note that $B_2$ is sign-coherent by Theorem~\ref{thm:all red or green}.
\end{remark}

\begin{corollary}
\label{cor:oris follow}
The matrix $C_{\textbf{M}}A$ constructed in Theorem~\ref{thm:extensions linear comb of framing} is row sign-coherent, i.e., each row is either non-negative or non-positive (corresponding to if the associated vertex is red or green in $\mu_M(\widehat H)$).
\end{corollary}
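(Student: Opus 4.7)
The plan is to read the entries of the product $C_{\mathbf{M}}A$ off as sign-constrained linear combinations, using two inputs: the sign-coherence of the $C$-matrix, and the nonnegativity of the matrix $A$ defining the triangular extension. Both facts are already in hand, so this corollary should be essentially a one-line computation once the bookkeeping is set up.

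First I would recall, from Definition~\ref{def:tri extension}, that every entry $A_{kj}$ is a nonnegative integer, since $A_{kj}$ counts the arrows from $k \in T$ to $j \in H$ in $Q = T \stackrel{A}{\rightarrow} H$. Next, since the mutation sequence $\mathbf{M}$ lies in $T$, the $C$-matrix $C_{\mathbf{M}}$ records the frozen-to-mutable adjacencies in $\mu_{\mathbf{M}}(\widehat{T})$, and by sign-coherence (Theorem~\ref{thm:all red or green}) each row $(C_{\mathbf{M}})_{i,*}$ is either entirely nonnegative (if vertex $i$ is green in $\mu_{\mathbf{M}}(\widehat{T})$) or entirely nonpositive (if vertex $i$ is red there).

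Then I would expand
\[
(C_{\mathbf{M}} A)_{ij} \;=\; \sum_k (C_{\mathbf{M}})_{ik}\, A_{kj}.
\]
Every summand in the $i$-th row is the product of a nonnegative number $A_{kj}$ with $(C_{\mathbf{M}})_{ik}$, and the latter has a single sign as $k$ varies. Hence every summand in row $i$, and therefore every entry $(C_{\mathbf{M}} A)_{ij}$, shares that common sign, proving row sign-coherence and simultaneously matching the row sign to the red/green status of vertex $i$ in $\mu_{\mathbf{M}}(\widehat{T})$ (which I take to be the intended statement, despite the typographical ``$\widehat{H}$'' in the corollary, since $\mathbf{M}$ mutates vertices of $T$).

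There is no real obstacle here: the corollary is immediate once one combines the nonnegativity of $A$ with sign-coherence of rows of $C_{\mathbf{M}}$, and the red/green interpretation is then just the definition of the $C$-matrix. If I were to polish anything, it would be to spell out the edge case where an entire row of $C_{\mathbf{M}}$ happens to be zero (which cannot occur by Theorem~\ref{thm:all red or green}, but is worth mentioning for completeness); this only reinforces the conclusion.
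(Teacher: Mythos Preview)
Your argument is correct and is precisely the reasoning the paper leaves implicit: the corollary is stated without proof, immediately after Theorem~\ref{thm:extensions linear comb of framing} and the remark invoking sign-coherence, so the intended justification is exactly the combination of row sign-coherence of $C_{\mathbf M}$ (Theorem~\ref{thm:all red or green}) with the nonnegativity of $A$ that you spell out. Your observation that the ``$\widehat H$'' in the statement should read ``$\widehat T$'' is also well taken.
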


\hide{
\begin{theorem}[\cite{MR2629987}]
\label{thm:sign coherence of G}
Let $G = C^{-T}$ for a $C$-matrix $C$. \cS{Standardize notation; probably add subscripts for mutation seqs and Q.}
Then the columns of $G$ are sign-coherent.
\end{theorem}
}

In a few places, it will be useful to discuss the following kinds of quivers.

\begin{definition}
\label{def:abundant}
A quiver $Q$ is \emph{abundant} if there are at least $2$ arrows between every pair of vertices in $Q$. That is, if $|b_{ij}| \geq 2$ for all $i\neq j$.
\end{definition}

\begin{definition}[{\cite[Definition~2.1]{warkentin2014exchange}}]
\label{def:forks}
An abundant quiver $Q$ is called a \emph{fork} if there is a vertex $r$ (called the \emph{point of return}) such that the induced full subquiver formed by deleting $r$ is acyclic, and for every oriented path $i \rightarrow r \rightarrow j$ we have $$b_{ji} > \max(b_{ir}, b_{rj}).$$

The \emph{forkless} part of a mutation class $\big [Q\big ]$ is the set of quivers in the class which are not forks. 
This forms a connected subgraph of the mutation graph of $Q$.
\end{definition}

\begin{definition}[{\cite[Definition~3.7]{warkentin2014exchange}}]
A quiver $Q$ is called a \emph{pre-fork} if there is a pair of vertices $k\neq k'$ such that 
\begin{itemize}
    \item $j \rightarrow k$ (resp. $k \rightarrow j$) if and only if $j \rightarrow k'$ (resp.\, $k' \rightarrow j$) for $j \not \in \{k, k'\}$, and
    \item the full induced subquivers formed by deleting either $k$ or $k'$ are forks with common point of return $r$.
\end{itemize}
\end{definition}

\begin{definition}
\label{def:key}
An acyclic quiver $Q$ is called a \emph{key} if there are a pair of vertices $k, k'$ such that 
\begin{itemize}
\item $j \rightarrow k$ (resp. $k \rightarrow j$) if and only if $j \rightarrow k'$ (resp.\, $k' \rightarrow j$) for $j \not \in \{k, k'\}$, and 
\item the subquiver formed by deleting $k$ (resp. $k'$) is abundant acyclic.
\end{itemize}
\end{definition}

\begin{remark}
Most forks are characterized by having a \emph{unique descent} and being vortex free (see \cite[Section 6]{LMC}).
One important property of a fork is that all mutations (except possibly the mutation at the point of return) again result in forks.
Every mutation applied to an abundant acyclic quiver will result in either a fork or another abundant acyclic.
Preforks (with keys) generalize these properties \cite{MR4695532}.
\end{remark}

\begin{theorem}[{\cite[Corollary~5.3, c.f.\ Figure~18]{MR4695532}}]
\label{thm:finiteKeys}
Suppose $Q$ is a key with $n \geq 4$ vertices. 
If $b_{kk'} = 0$, then $Q$ is mutation equivalent to $n-1$ keys (including itself), each with a distinct source and sink.
If $b_{kk'} = 1$, then $Q$ is mutation equivalent to $2(n-1)$ keys (including itself), where two distinct keys share the same source and sink if and only if they are isomorphic to each other by the transposition $(k,k')$. 
\end{theorem}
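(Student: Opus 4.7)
The plan is to analyze which mutations preserve the key structure, then enumerate the keys in the mutation class by studying source/sink dynamics on the abundant acyclic subquiver $Q\setminus\{k'\}$.

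First, I would establish that mutating $Q$ at the unique source or unique sink of $Q\setminus\{k'\}$ produces another key with the same twin pair $\{k,k'\}$. Because source/sink mutations only reverse incident arrows (no new arrows are created and no $2$-cycles appear), the twin condition, $j\to k \iff j \to k'$ for $j\notin\{k,k'\}$, is preserved, as is abundant-acyclicity of the complements of $k$ and of $k'$. Conversely, mutating at an internal (non-source, non-sink) vertex of $Q\setminus\{k'\}$ introduces new arrows via length-$2$ paths and yields a pre-fork rather than a key, so those mutations lead out of the key class. Mutations at $k$ or $k'$ require separate treatment: when $b_{kk'}=0$, the twins are simultaneously a source or simultaneously a sink, and mutating at only one breaks the twin symmetry; when $b_{kk'}=1$, a mutation at whichever twin is a source or sink of $Q$ itself effectively interchanges $k$ and $k'$ while preserving the key structure.

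Next, I would appeal to the source/sink orbit structure for abundant acyclic quivers: the forkless part of the mutation class of an abundant acyclic quiver on $n-1$ vertices consists of exactly $n-1$ abundant acyclic quivers, one for each choice of source vertex, with sink-source mutations cycling through them. Each of these lifts to a key for $Q$ by attaching the twin pair in the same manner. For $b_{kk'}=0$, this gives $n-1$ keys, each with a distinct unique source and a distinct unique sink, since the unordered twin pair $\{k,k'\}$ behaves as a single indistinguishable node in the acyclic ordering. For $b_{kk'}=1$, the asymmetry $k\to k'$ distinguishes the twins, and the additional twin-swap mutations described above double the count to $2(n-1)$; two keys sharing the same source and sink then differ only by reversing the arrow between $k$ and $k'$, and hence are related by the transposition $(k,k')$.

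The main obstacle is exhaustiveness: ruling out additional keys reached by long mutation sequences that leave the key class, pass through pre-forks or forks, and return. Here I would invoke the structural fact (noted in the preceding remark) that mutating a fork almost always yields another fork, so the only way back to the key class is through a pre-fork via a mutation at the pair $\{k, k'\}$ corresponding to the pre-fork structure. Tracking these returns shows every reachable key arises from the source/sink orbit identified above, matching the asserted counts $n-1$ and $2(n-1)$.
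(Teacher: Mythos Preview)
The paper does not supply its own proof of this statement; Theorem~\ref{thm:finiteKeys} is quoted verbatim from \cite[Corollary~5.3]{MR4695532} and used as a black box. So there is no in-paper argument to compare your proposal against.

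On its own merits, your outline is reasonable and broadly matches what the cited reference does: identify the mutations that keep one inside the class of keys (source/sink mutations on the underlying abundant acyclic tournament, together with the twin-swap at $\{k,k'\}$ when $|b_{kk'}|=1$), and then argue that every other mutation leads into the fork/pre-fork region from which one cannot return to a new key. The genuine gap is in that last step. Your final paragraph says ``tracking these returns shows every reachable key arises from the source/sink orbit,'' but this is precisely the nontrivial content of the theorem. What is needed is the tree-like structure of the fork region (Warkentin's Tree Lemma \cite{warkentin2014exchange}) together with the analogous analysis of pre-forks carried out in \cite{MR4695532}: once one mutates a key at a non-source/non-sink vertex, one enters a pre-fork, and further mutations away from the points of return produce only forks and pre-forks, never another key. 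Without invoking or reproving that structural result, you have only exhibited $n-1$ (resp.\ $2(n-1)$) keys, not shown that these exhaust the keys in $[Q]$. Similarly, your appeal to ``the source/sink orbit structure for abundant acyclic quivers'' presupposes that the forkless part of such a class consists of exactly $n-1$ quivers, which is again a consequence of the same fork machinery rather than an independent fact.
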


\begin{figure}[ht]
    \centering
    \vspace{-10pt} 
    
        \[\begin{tikzcd}[every arrow/.append style = {-{Stealth}}]
             & 2 & \\
            1 &  & 3 
            \arrow[from=1-2,to=2-1,"3"']
            \arrow[from=2-3,to=1-2,"8"']
            \arrow[from=2-1,to=2-3,"2"']
        \end{tikzcd} \quad \quad
        \begin{tikzcd}[every arrow/.append style = {-{Stealth}}]
            & 4 & \\
            1 & 2 & 3 
            \arrow[from=2-2,to=2-1,"2"]
            \arrow[from=2-2,to=2-3,"4"']
            \arrow[from=2-1,to=1-2,"2"]
            \arrow[from=2-2,to=1-2,"3"]
            \arrow[from=2-3,to=1-2,"4"']
        \end{tikzcd} \quad \quad
        \begin{tikzcd}[every arrow/.append style = {-{Stealth}}]
            & 4 & \\
            1 & 2 & 3 
            \arrow[from=2-2,to=2-1,"2"]
            \arrow[from=2-2,to=2-3,"4"']
            \arrow[from=2-1,to=1-2,"8"]
            \arrow[from=1-2,to=2-2,"3"]
            \arrow[from=2-3,to=1-2,"5"']
        \end{tikzcd} 
        \]
    \caption{A fork, key, and prefork.}
    \label{fig:quiver types}
\end{figure}
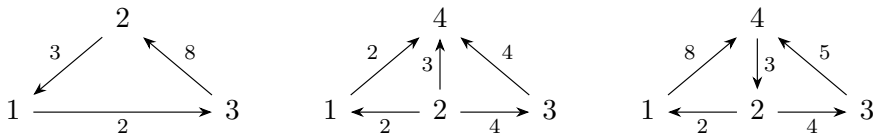

\newpage
\section{Mutation cycles from reddening sequences}
\label{sec:red makes cycles}

We recall our main observation.

\begin{corollary}[{Corollary~\ref{cor:extensions give iso cycles}}]
Let $H,T$ be quivers with respective reddening sequences $\mathbf{M}_H, \mathbf{M}_T$.
Then the triangular extension $Q = T \stackrel{A}\rightarrow H$ is isomorphic to $\mu_{\mathbf{M}_T\mathbf{M}_H}(Q)$.
\end{corollary}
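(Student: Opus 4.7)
The plan is to chain the two directly applicable results stated just above: Theorem~\ref{thm:extension red seq} (which produces reddening sequences for triangular extensions from reddening sequences of the summands) and Corollary~\ref{cor:reddening gives iso} (which says that mutating a quiver along one of its reddening sequences returns an isomorphic quiver). The statement to be proved is precisely the composition of these two facts, so no genuine new input is required.

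More concretely, I would first invoke Theorem~\ref{thm:extension red seq} with the given data: since $\mathbf{M}_H$ is a reddening sequence for $H$ and $\mathbf{M}_T$ is a reddening sequence for $T$, the concatenation $\mathbf{M}_T\mathbf{M}_H$ is a reddening sequence for the triangular extension $Q = T \stackrel{A}\rightarrow H$, regardless of the choice of the non-negative matrix $A$. This step requires only quoting the theorem; I would emphasize that the conclusion is independent of $A$, since the reader may find this somewhat surprising.

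Next I would apply Corollary~\ref{cor:reddening gives iso} to the quiver $Q$ equipped with the reddening sequence $\mathbf{M}_T\mathbf{M}_H$ obtained in the previous step. This yields immediately that $Q$ is isomorphic to $\mu_{\mathbf{M}_T\mathbf{M}_H}(Q)$, which is the desired conclusion.

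Since the proof is essentially two sentences, there is no serious obstacle; the only thing worth flagging in the write-up is that although the two inputs are prior results in the literature, their combination (this corollary) appears to be new, as noted in the subsequent remark. I would therefore keep the proof very short (possibly omitting it entirely in the final version, as the statement is already marked as a corollary of the two referenced results), but make sure the chain of implications is spelled out explicitly for readers unfamiliar with both sources.
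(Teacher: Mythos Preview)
Your proposal is correct and matches the paper's own treatment exactly: the paper states this result as an immediate consequence of Corollary~\ref{cor:reddening gives iso} and Theorem~\ref{thm:extension red seq} without giving any further argument, which is precisely the two-step chain you describe.
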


If the reddening sequences each give equal (instead of isomorphic) quivers, then we have a mutation cycle.

\begin{proposition}
\label{prop:extensions with equality give cycles}
    Let $H,T$ be quivers with respective reddening sequences $\mathbf{M}_H, \mathbf{M}_T$, where both sequences have the identity as their associated permutation. 
    If $Q = T \stackrel{A}{\rightarrow} H$, then $\mu_{\mathbf{M}_T\mathbf{M}_H}(Q) = Q$.
\end{proposition}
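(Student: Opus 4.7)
The plan is to show that the mutation sequence $\mathbf{M}_T\mathbf{M}_H$ restores $Q$ by analyzing its effect in two stages. After applying $\mathbf{M}_H$, every arrow between $T$ and $H$ is reversed while the subquivers $T$ and $H$ return to themselves, so $\mu_{\mathbf{M}_H}(Q) = H \stackrel{A^T}{\rightarrow} T$ as labeled quivers; applying $\mathbf{M}_T$ then reverses those arrows a second time, and the two reversals cancel.

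For the first stage I would separately track what $\mathbf{M}_H$ does to the three blocks of the $B$-matrix of $Q$. No vertex of $T$ is mutated, and no new arrow within $T$ can appear, since such an arrow would require a length-$2$ path $t_1 \to h \to t_2$, and no arrow ever points from $H$ to $T$ during the first stage. Hence the $T$-subquiver is preserved. Likewise, a mutation at $h\in H$ inside $Q$ affects the within-$H$ arrows in exactly the same way as the corresponding mutation inside the standalone $H$, since the length-$2$ paths, arrow reversals, and $2$-cycle cancellations required for this block all stay within $H$. By induction the $H$-subquiver after $\mathbf{M}_H$ equals $\mu_{\mathbf{M}_H}(H)$; combining the identity-permutation hypothesis with Proposition~\ref{prop:red/green gives iso} gives $\mu_{\mathbf{M}_H}(\widehat H) = \widecheck H$ as labeled quivers, and restricting to mutable parts yields $\mu_{\mathbf{M}_H}(H) = H$.

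To track the arrow matrix $A$, I would apply Theorem~\ref{thm:extensions linear comb of framing} to the triangular extension $H \stackrel{A^T}{\rightarrow} T$ obtained from $Q$ by reversing every arrow, using that mutation commutes with this arrow-reversal. Together with the analysis above, this shows that after $\mathbf{M}_H$ the $T$-to-$H$ arrow matrix equals $A\,C^T$, where $C$ denotes the $C$-matrix of $\mathbf{M}_H$ viewed as a mutation sequence of $H$. From $\mu_{\mathbf{M}_H}(\widehat H) = \widecheck H$ and Definition~\ref{def:C-mats} one reads off $C = -I$, so the new matrix is $-A$ and every $T$-to-$H$ arrow is reversed. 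Thus $\mu_{\mathbf{M}_H}(Q) = H \stackrel{A^T}{\rightarrow} T$ exactly.

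The second stage is entirely symmetric: $\mathbf{M}_T$ mutates only at $T$-vertices of $H \stackrel{A^T}{\rightarrow} T$, preserves the $H$-subquiver, returns the $T$-subquiver to itself by the identity-permutation hypothesis, and reverses the $H$-to-$T$ arrows a second time, so $\mu_{\mathbf{M}_T\mathbf{M}_H}(Q) = Q$ on the nose. The main obstacle is the arrow-matrix analysis in the third paragraph: Theorem~\ref{thm:extensions linear comb of framing} as stated only handles mutations in the tail of a triangular extension, so one must derive its mirror form via the arrow-reversal trick, and keep careful track of sign conventions to conclude that $C = -I$ and hence $A \mapsto -A$.
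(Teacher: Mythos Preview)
Your overall strategy is right, but there is a real gap in the second paragraph and an unnecessary detour that the paper avoids.

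The gap: your claim that ``no arrow ever points from $H$ to $T$ during the first stage'' is false. Mutating at any $h\in H$ immediately reverses each arrow $t\to h$ into $h\to t$, so arrows from $H$ to $T$ appear at the very first step. What \emph{is} true is that, by sign-coherence (Theorem~\ref{thm:all red or green} via Theorem~\ref{thm:extensions linear comb of framing} and Corollary~\ref{cor:oris follow}), the vertex currently being mutated has all of its arrows to the opposite subquiver oriented the same way; hence there is no length-two path $t_1\to h\to t_2$ through that vertex, and the $T$-block is left untouched. Your conclusion survives, but not for the reason you gave.

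The detour: under the paper's convention $\mu_{\mathbf{M}_T\mathbf{M}_H}$ applies $\mathbf{M}_T$ \emph{first}. You apply $\mathbf{M}_H$ first, which forces you to mutate at the head of $T\stackrel{A}{\rightarrow}H$ and then reach for an arrow-reversal ``mirror'' of Theorem~\ref{thm:extensions linear comb of framing}---precisely the complication you flag as the main obstacle. The paper instead mutates at $\mathbf{M}_T$ first, i.e.\ at the \emph{tail}; Theorem~\ref{thm:extensions linear comb of framing} then applies verbatim to give cross-arrow matrix $C_{\mathbf{M}_T}A=(-I)A$, so $\mu_{\mathbf{M}_T}(Q)=H\stackrel{A^T}{\rightarrow}T$. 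Now $H$ is the tail, and the same theorem handles $\mathbf{M}_H$ directly. Your main obstacle vanishes once the two stages are taken in the intended order.
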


\begin{proof}
It suffices to show that $\mu_{\mathbf{M}_T}(Q) = H \stackrel{A^T}{\rightarrow} T$; repeating the argument on $\mu_{\mathbf{M}_T}(Q)$ with $H,T$ exchanged gives the claim.

As mutation commutes with restriction, $\mu_{\mathbf{M}_T}(T) = T$ is a subquiver of $\mu_{\mathbf{M}_T}(Q)$. 
By Theorems~\ref{thm:all red or green}~\ref{thm:extensions linear comb of framing}, each mutation $t$ in $\mathbf{M}_T$ is at a red or green vertex in $\big [ T \big ]_{\mathbf M_T}$, and so there are no directed paths from $h \rightarrow t \rightarrow \overline h$ for $h, \overline h \in H$.
Thus each mutation in $\mathbf{M}_T$ leaves the subquiver $H$ unchanged.
Finally, because $\mathbf{M}_T$ is a reddening sequence and Theorem~\ref{thm:extensions linear comb of framing}, $\mu_{\mathbf{M}_T}(\widehat T) = \widecheck T$, and so $\mu_{\mathbf{M}_T}(Q) = H \stackrel{A^T}{\rightarrow} T$.
\end{proof}


This idea can then be extended to reddening sequences that do not produce equality of the mutable subquivers, where the proof is nigh identical to that of Proposition~\ref{prop:extensions with equality give cycles}.

\begin{proposition}
\label{prop:extensions without equality give cycles}
    Let $H,T$ be quivers that admit respective reddening sequences $\mathbf{M}_H, \mathbf{M}_T$, which have respective associated permutations $\sigma$ and $\rho$.  
    Let $Q = T \stackrel{A}{\rightarrow} H$, and let $k>0$ be an integer such that $\sigma^k = \text{id}$ and $\rho^k = \text{id}$. 
    Then $\mu_{\mathbf{S}}(Q) = Q$, where $$\mathbf{S} = \mathbf{M}_T \mathbf{M}_H \rho(\mathbf{M}_T) \sigma(\mathbf{M}_H)  \rho^2(\mathbf{M}_T) \sigma^2(\mathbf{M}_H)  \dots  \rho^{k-1}(\mathbf{M}_T) \sigma^{k-1}(\mathbf{M}_H).$$ 
\end{proposition}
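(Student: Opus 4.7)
The plan is to iterate the argument of Proposition~\ref{prop:extensions with equality give cycles} a total of $2k$ times, carefully tracking how the associated permutations $\rho$ and $\sigma$ relabel the two summands at each step. First I would isolate a one-step lemma: for any triangular extension $R = V \stackrel{X}{\rightarrow} W$ and any reddening sequence $\mathbf{M}_V$ of $V$ with associated permutation $\pi$, the quiver $\mu_{\mathbf{M}_V}(R)$ equals the triangular extension $W \stackrel{X'}{\rightarrow} \pi(V)$, where $\pi(V)$ denotes the relabeling of $V$ by $\pi$ and the matrix $X'$ is determined by $X'_{w,\,\pi(v)} = X_{v,\,w}$. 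The proof is nearly identical to that of Proposition~\ref{prop:extensions with equality give cycles}: by Theorem~\ref{thm:all red or green} applied to $V$, every mutation in $\mathbf{M}_V$ occurs at a vertex that is either red or green in $[V]_{\mathbf{M}_V}$, so no directed path $w \to v \to w'$ with $v$ mutable in $V$ can form during the sequence, and $W$ is untouched. The $V$-part therefore evolves exactly as $\widehat{V}$ does under $\mathbf{M}_V$, landing on $\pi(V)$ by Proposition~\ref{prop:red/green gives iso}, and the new arrow matrix between the parts is read off from Theorem~\ref{thm:extensions linear comb of framing} using that $C_{\mathbf{M}_V}$ is the negative of the permutation matrix of $\pi$.

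Next I would verify that $\rho^i(\mathbf{M}_T)$ (respectively $\sigma^i(\mathbf{M}_H)$) is a reddening sequence of $\rho^i(T)$ (respectively $\sigma^i(H)$) with the same associated permutation $\rho$ (respectively $\sigma$). This follows from Theorem~\ref{thm:muller redd}, since the quiver isomorphism $\rho\colon T \to \rho(T)$ conjugates $\mathbf{M}_T$ to $\rho(\mathbf{M}_T)$, and likewise for $\sigma$. With this in hand I apply the one-step lemma $2k$ times in succession starting from $Q_0 = Q = T \stackrel{A}{\rightarrow} H$: after $\mathbf{M}_T$ we obtain $Q_1 = H \stackrel{\cdot}{\rightarrow} \rho(T)$; after $\mathbf{M}_H$ we obtain $Q_2 = \rho(T) \stackrel{\cdot}{\rightarrow} \sigma(H)$; after $\rho(\mathbf{M}_T)$ we obtain $Q_3 = \sigma(H) \stackrel{\cdot}{\rightarrow} \rho^2(T)$; and so on. A short induction on $i$ shows that after $2i$ mutations we have $Q_{2i} = \rho^i(T) \stackrel{A_{2i}}{\rightarrow} \sigma^i(H)$ with $(A_{2i})_{\rho^i(t),\,\sigma^i(h)} = A_{t,\,h}$. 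Setting $i = k$ and using $\rho^k = \sigma^k = \text{id}$ gives $Q_{2k} = T \stackrel{A}{\rightarrow} H = Q$, as desired.

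The main obstacle is bookkeeping: one must confirm at each step that the correct conjugated reddening sequence is being applied, that its associated permutation is $\rho$ (respectively $\sigma$) rather than some nontrivial conjugate of it, and that the arrow matrix between the two parts transforms consistently under the successive composition of these permutations. Once that is in place, the conclusion is immediate from $\rho^k = \sigma^k = \text{id}$.
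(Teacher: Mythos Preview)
Your proposal is correct and follows essentially the same approach as the paper's proof, which simply states that after performing $\mathbf{M}_T\mathbf{M}_H$ one obtains the triangular extension $\rho(T)\stackrel{A}{\rightarrow}\sigma(H)$ and then says ``repeat this process until equality is achieved.'' Your version fills in exactly the bookkeeping the paper omits: the one-step lemma identifying $\mu_{\mathbf{M}_V}(V\stackrel{X}{\rightarrow}W)$ with $W\stackrel{X'}{\rightarrow}\pi(V)$, the observation that $\rho^i(\mathbf{M}_T)$ is a reddening sequence of $\rho^i(T)$ with the same associated permutation $\rho$ (since $\rho^i\rho\rho^{-i}=\rho$), and the inductive tracking of the arrow matrix showing $(A_{2i})_{\rho^i(t),\sigma^i(h)}=A_{t,h}$. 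One small quibble: the fact that $\rho^i(\mathbf{M}_T)$ reddens $\rho^i(T)$ is really just transport of structure along a vertex relabeling, not an instance of Theorem~\ref{thm:muller redd} (which concerns conjugation by a \emph{mutation} sequence); but the claim itself is immediate.
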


\begin{proof}
    After performing the mutation sequence $\mathbf{M}_T \mathbf{M}_H$, we are left with the triangular extension $  \rho(T)  \stackrel{A}{\rightarrow} \sigma(H) $.
    Repeat this process until equality is achieved. 
\end{proof}

\begin{remark}
By Theorem~\ref{thm:extension red seq}, $\mathbf M_T \mathbf M_H$ is a reddening sequence for $Q$, and from the proof we see that its associated permutation is $\rho \sigma$ (which act on disjoint sets of vertices).
\end{remark}

It is conceivable that the mutation sequence $\mathbf{M}_T \mathbf{M}_H$ may visit the same quiver multiple times.
Theorem~\ref{thm:distinguishing minimal} below shows that this is atypical, but we first require a definition.

\begin{definition}
\label{def:distinguishing}
Fix a quiver $T$ and a mutation sequence $\mathbf{M}$.
Let $I_k$ denote the quiver with $k$ isolated vertices (hence $I_k$ has no arrows). 
A $|T| \times k$ matrix $A$ is \emph{distinguishing} for~$[T]_\mathbf{M}$ if every quiver in $\big [T \stackrel{A}{\rightarrow} I_k \big ]_\mathbf{M}$ is distinct. 
\end{definition}

\begin{remark}
Recall from Theorem~\ref{thm:C matrices determine} that if $C$-matrices agree then so do the quivers themselves.
Thus, if any two $C$-matrices on a mutation sequence agree, then there are no distinguishing matrices for that sequence (and one should instead take a shorter, simple, subsequence).

On the other hand, if all $C$-matrices are distinct then there are always many distinguishing matrices. 
Fix two quivers in $[T]_{\mathbf M}$, with $C$-matrices $C, C'$.
Then the corresponding quivers in $\big [T \stackrel{A}{\rightarrow} I_k\big ]_{\mathbf{M}}$ are distinct whenever $CA \neq C'A$.
Each of these inequality constraints requires that some column vector in $A$ avoids the eigenspace with eigenvalue~$1$ of the matrix $C^{-1} C'$.
Empirically, it seems rare that $C^{-1} C'$ has a positive eigenvector with eigenvalue $1$ at all, and so these constraints are trivial.
For reddening sequences where all $C$-matrices are distinct, we are not aware of a single example of a positive matrix which is not distinguishing. 
Non-distinguishing matrices do exist for arbitrary mutation sequences. 
\end{remark}

\begin{theorem}
\label{thm:distinguishing minimal}
    Let $H,T$ be quivers with respective reddening sequences $\mathbf{M}_H, \mathbf{M}_T$, where both sequences have the identity as their associated permutation. 
    Suppose $A$ is a distinguishing matrix for $\big [T \big ]_{\mathbf{M}_T}$ and $A^T$ is a distinguishing matrix for $\big [H \big ]_{\mathbf{M}_H}$, and that $A$ has a strictly positive row. 
    Then the mutation cycle $\mathbf{M}_T\mathbf{M}_H$, starting at $T \stackrel{A}{\rightarrow} H$, is simple. 
\end{theorem}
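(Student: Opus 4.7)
The plan is to break the argument into within-phase and cross-phase comparisons. Let $p = |\mathbf{M}_T|$, $q = |\mathbf{M}_H|$, and denote the quivers along the cycle by $Q_0, Q_1, \ldots, Q_{p+q}$, so that $Q_0 = Q_{p+q} = T \stackrel{A}{\to} H$, the midpoint is $Q_p = H \stackrel{A^T}{\to} T$ (by Proposition~\ref{prop:extensions with equality give cycles}), the first $p$ mutations are $\mathbf{M}_T$, and the last $q$ are $\mathbf{M}_H$. I need to show every intermediate quiver is distinct from every other and from $Q_0$.

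For within-phase distinctness: by the proof of Proposition~\ref{prop:extensions with equality give cycles}, each mutation in $\mathbf{M}_T$ leaves the $H$-subquiver fixed, and the pair consisting of the $T$-subquiver and the $T$-to-$H$ arrow matrix evolves identically to the pair in $[T \stackrel{A}{\to} I_{|H|}]_{\mathbf{M}_T}$. The distinguishing hypothesis on $A$ thus forces $Q_0, Q_1, \ldots, Q_p$ pairwise distinct, and the symmetric argument with $A^T$ gives $Q_p, Q_{p+1}, \ldots, Q_{p+q}$ pairwise distinct.

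For cross-phase, suppose $Q_k = Q_{p+m}$ for some $0 \leq k < p$ and $0 < m \leq q$; I aim to conclude $(k, m) = (0, q)$. Matching subquivers gives $\mu_{\mathbf{M}_T^{(k)}}(T) = T$ and $\mu_{\mathbf{M}_H^{(m)}}(H) = H$, and Theorem~\ref{thm:extensions linear comb of framing} yields
\[C_T^{(k)} A = -A \bigl(C_H^{(m)}\bigr)^T.\]
Let $t^* \in T$ have row $t^*$ of $A$ strictly positive. Row $t^*$ of the left side is sign-coherent by Corollary~\ref{cor:oris follow}, while the entry at column $h$ on the right is a strictly-positive-coefficient combination of the entries of row $h$ of $C_H^{(m)}$, so its sign matches the color of vertex $h$ at phase-$2$ step $m$. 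Sign-coherence across $h$ then forces every row of $C_H^{(m)}$ to share a common sign: all of $H$ is simultaneously green or simultaneously red at step $m$. By Proposition~\ref{prop:red/green gives iso} combined with $\mu_{\mathbf{M}_H^{(m)}}(H) = H$, this gives $C_H^{(m)} = \pm P_\pi$ for a permutation matrix $P_\pi$ associated to some automorphism $\pi$ of $H$.

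To finish, I use the distinguishing hypotheses. When $\pi = \mathrm{id}$, the arrow-matrix equation becomes $C_T^{(k)} A = \pm A$; applying distinguishing of $A$ on $[T \stackrel{A}{\to} I_{|H|}]_{\mathbf{M}_T}$ (at step $k$ versus step $0$ for the $+$ sign, or step $p$ for the $-$ sign) forces $k \in \{0, p\}$, and with the sign constraints and $k < p$ only $(k,m) = (0,q)$ survives. The main obstacle is the sub-case $\pi \neq \mathrm{id}$: here the arrow equation is the nontrivial equivariance $C_T^{(k)} A = \mp A P_\pi^T$, and my plan is to combine both distinguishing hypotheses simultaneously. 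A parallel column-sign-coherence analysis of the $R_m$-side arrow matrix $-A(C_H^{(m)})^T$ (whose columns are sign-coherent since rows of $C_H^{(m)} A^T$ are) together with the strict positivity of row $t^*$ of $A$ pins down $C_T^{(k)} = \pm P_\rho$ for an automorphism $\rho$ of $T$, so the equation collapses to $P_\rho A = A P_\pi^T$. Iterating this equivariance along the $\rho$-orbit of $t^*$ propagates strict positivity, which upon applying distinguishing on the $T$-side (which would force $P_\rho A = A$ whenever the quivers at steps $0$ and $k$ coincide in $[T \stackrel{A}{\to} I_{|H|}]_{\mathbf{M}_T}$) and on the $H$-side (forcing $P_\pi A^T = A^T$ for the analogous comparison) eventually yields $\rho = \pi = \mathrm{id}$, a contradiction with the assumed non-triviality. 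The delicate part of this last step is reconciling the equivariance with the two distinguishing conditions without losing information to potential left-null-space behavior of $A$.
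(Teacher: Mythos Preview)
Your within-phase argument matches the paper's. The cross-phase argument is where you diverge substantially, and where the gap lies.

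The paper's cross-phase argument is a one-line combinatorial observation, not a matrix-equation analysis. In any interior phase-1 quiver $R$, each vertex $t\in T$ has all of its arrows to $H$ pointing one way (row sign-coherence of $C_T^{(k)}A$), so no path $h\to t\to \bar h$ with $h,\bar h\in H$ can occur in $R$. In an interior phase-2 quiver $R'$, the $H$-vertices are not all the same colour, so one may pick a green $h$ and a red $\bar h$; then the vertex $t^*$ corresponding to the strictly positive row of $A$ satisfies $(C_H^{(m)}A^T)_{h,t^*}>0$ and $(C_H^{(m)}A^T)_{\bar h,t^*}<0$, giving a path $h\to t^*\to\bar h$ in $R'$. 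That single path distinguishes $R'$ from every $R$. No automorphism case-split is needed.

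Your deduction that ``all of $H$ is simultaneously green or simultaneously red at step $m$'' is in fact exactly the contrapositive of the paper's observation, so you were one step away from the intended proof. Instead of stopping there, you pass to the case $C_H^{(m)}=\pm P_\pi$ and attempt to analyse the equivariance $C_T^{(k)}A=\mp A P_\pi^T$. This is where your argument has a genuine gap: in the sub-case $\pi\neq\mathrm{id}$ you only sketch a plan. The step ``pins down $C_T^{(k)}=\pm P_\rho$'' does not follow from global sign-coherence of $C_T^{(k)}A$ alone, precisely because a red row of $C_T^{(k)}$ could lie in the left null-space of $A$ and contribute a zero row to $C_T^{(k)}A$ (you flag this yourself). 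And even granting $C_T^{(k)}=\pm P_\rho$, the equation $P_\rho A = \pm A P_\pi^T$ together with the two distinguishing hypotheses does not obviously force $\rho=\pi=\mathrm{id}$: distinguishing compares the \emph{extension} quivers at different steps, and when $\rho,\pi$ are nontrivial automorphisms those extension quivers need not coincide with the step-$0$ or step-$p$ quivers, so no contradiction is produced. The ``iterate along the $\rho$-orbit of $t^*$'' idea does not close this; it only shows several rows of $A$ are strictly positive, which is not what is needed.

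In short: drop the matrix-equation route for the cross-phase comparison and use the direct path argument with $t^*$.
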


\begin{proof}
The mutation sequence $\mathbf{M}_T\mathbf{M}_H$, starting at $T \stackrel{A}{\rightarrow} H$, is a mutation cycle by Theorem~\ref{prop:extensions with equality give cycles}.
So it suffices to show that each pair of quivers on the cycle are distinct.
By Theorem~\ref{thm:C matrices determine} and because $A$ (resp. $A^T$) is distinguishing with respect to $\big [T\big ]_{\mathbf{M}_T}$ (resp. $\big [H\big ]_{\mathbf{M}_H}$), any pair of quivers that are from the first $|\mathbf M_T|$ (resp. last $|\mathbf M_H|$) mutations are distinct.

Suppose finally that $R$ is in $\big [T \stackrel{A}{\rightarrow} H\big ]_{\mathbf{M}_T}$ while $R'$ is in $\big [H \stackrel{A^T}{\rightarrow} T\big ]_{\mathbf{M}_H}$, and that neither is $T \stackrel{A}{\rightarrow} H$ nor $H \stackrel{A^T}{\rightarrow} T$. 
We will differentiate them by showing that there exists vertices $t, \overline t \in T$ and $h \in H$ such that $t \rightarrow h \rightarrow \overline t$ in $R$ but not in $R'$.

By Theorem~\ref{thm:all red or green}, every vertex of $T$ as a subquiver of $R$ is either red or green. 
By Proposition~\ref{prop:red/green gives iso} this subquiver additionally has at least one green and one red vertex (as $R$ is not $T \stackrel{A}{\rightarrow} H$ nor $H \stackrel{A^T}{\rightarrow} T$).
We may set $t$ to any green vertex and $\overline t$ to any red vertex, and set $h$ to the vertex corresponding to the strictly positive row in $A$. 
By Theorem~\ref{thm:extensions linear comb of framing} and Corollary~\ref{cor:oris follow}, there is a path $t \rightarrow h \rightarrow \overline t$ in $R$.
On the other hand, these same results show that every vertex $h \in H$ has only outgoing or incoming arrows to $T$ in $R'$. 
Thus $R \neq R'$.
\end{proof}

\subsection{Reddening Sequences and Forks} \label{subsec:reddening-forks}

As each reddening sequence gives a mutation cycle, we next turn our attention to where reddening sequences can be found in the mutation graph. 
We show that it suffices to restrict to the forkless part (Definition~\ref{def:forks}).

\begin{lemma} \label{lem:reddening forkless part}
    Let $Q$ be a non-fork that admits a reddening sequence.
    Any reduced reddening sequence $\mathbf{N}$ of $Q$ does not pass through a fork.
    In other words, any reddening sequence can be reduced to one that is contained completely in the forkless part of the mutation graph.
\end{lemma}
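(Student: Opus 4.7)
The plan is to argue by contradiction: assume $\mathbf{N} = i_1, \ldots, i_m$ is a reduced reddening sequence of the non-fork $Q$ that passes through some fork. I localize the fork region by letting $j$ be the smallest index with $Q_j := \mu_{i_1, \ldots, i_j}(Q)$ a fork and $k \geq j$ the largest. Corollary~\ref{cor:reddening gives iso} gives $Q_m \cong Q$, a non-fork, so $Q_{j-1}$ and $Q_{k+1}$ are both non-forks that bound the excursion into the fork region.

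The key structural input is the theorem of Warkentin, recorded in the remark preceding Theorem~\ref{thm:finiteKeys}, that mutating a fork $F$ at any vertex other than its point of return $r$ yields a fork with the same point of return. Applied to $Q_j$, this forces the entry mutation $i_j$ to equal the point of return $r$ of $Q_j$: otherwise $\mu_{i_j}(Q_j) = Q_{j-1}$ would itself be a fork. The same reasoning applied at the exit shows $i_{k+1}$ is the point of return of $Q_k$. A short induction on $\ell \in [j, k]$---invoking the above preservation, and using uniqueness of the point of return to handle the case where we mutate at the current point of return and remain a fork---shows that the point of return is constantly $r$ throughout the fork region; in particular $i_{k+1} = r$.

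In the simplest case $k = j$, the mutations $i_j$ and $i_{j+1} = i_{k+1}$ are both equal to $r$ and are adjacent in $\mathbf N$, directly contradicting that $\mathbf N$ is reduced. For the general case $k > j$, my plan is to use the $C$-matrix formalism (Theorems~\ref{thm:all red or green} and~\ref{thm:C matrices determine}) to show that the excursion $i_j, i_{j+1}, \ldots, i_{k+1}$ acts trivially on the framed quiver of $Q_{j-1}$: every fork visited has the same acyclic complement $F \setminus r$ (up to the arrows incident to $r$), and the two boundary mutations at $r$ bookending the excursion cancel in their net effect on the columns of the $C$-matrix. Theorem~\ref{thm:C matrices determine} would then permit deleting this sub-sequence from $\mathbf N$, producing a strictly shorter reddening sequence; arguing by minimal counterexample, one extracts from this shortening an adjacent duplicate that must already have been present in $\mathbf N$.

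The main obstacle is the $k > j$ case. The heuristic is clear---the fork region is a ``dead end'' that a reduced sequence has no reason to enter and exit---but making the claim rigorous requires carefully tracking how mutations at non-return vertices alter the $r$-adjacency (which is constrained by the sign-coherence of Theorem~\ref{thm:all red or green}) and then verifying that the combined $C$-matrix effect of the two mutations at $r$ is the identity. An induction on $k - j$, combined with the rigidity of forks (unique point of return, abundant complement), should reduce this to a finite combinatorial check in the neighborhood of $r$.
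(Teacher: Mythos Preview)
Your structural input is misstated, and this is what sends the argument off course. Warkentin's Tree Lemma does \emph{not} say that mutating a fork $F$ at a vertex $v \neq r$ yields a fork with the \emph{same} point of return~$r$; it says the result is a fork whose point of return is the vertex \emph{just mutated}, namely~$v$. (The remark you cite only records that the result is again a fork; the identification of the new point of return is the content of the Tree Lemma itself.) Your induction that the point of return stays constantly equal to~$r$ throughout the excursion therefore fails at the very first step: $Q_{j+1} = \mu_{i_{j+1}}(Q_j)$ is a fork with point of return $i_{j+1}$, not $i_j$.

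With the correct form of the Tree Lemma the argument collapses to something much shorter than what you propose, and this is exactly the paper's proof. You correctly deduce that the point of return of $Q_j$ is $i_j$. Now inductively the point of return of $Q_\ell$ is $i_\ell$ for every $\ell \geq j$: since $\mathbf N$ is reduced, $i_{\ell+1} \neq i_\ell$, so each mutation is at a non-return vertex and produces a fork with point of return $i_{\ell+1}$. Thus once the sequence enters the fork region it can never leave, contradicting that $\mu_{\mathbf N}(Q) \cong Q$ is a non-fork. There is no separate $k>j$ case, and no need for the $C$-matrix machinery you sketch; that part of your plan (which you yourself flag as the main obstacle, and which is left at the level of a heuristic) can be discarded entirely.
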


\begin{proof}
    If $Q$ is mutation-finite or disconnected, then the result follows trivially as $Q$ is not mutation-equivalent to any forks.
    We then assume that $Q$ is mutation-infinite and connected, meaning that it is mutation-equivalent to a fork.
    Let $\mathbf{N}$ be any reduced reddening sequence of $Q$.
    Then $\mu_{\mathbf{N}}(Q)$ is isomorphic to $Q$ by Corollary~\ref{cor:reddening gives iso}.
    Since $Q$ is not a fork, neither is $\mu_{\mathbf{N}}(Q)$ a fork.
    If mutation along $\mathbf{N}$ produces a fork, then we know that further mutation will only produce forks by the Tree Lemma \cite[Lemma~2.8]{warkentin2014exchange}, as we are working with a reduced mutation sequence and can never pass through a point of return.
    Therefore, we cannot arrive at a fork during our reddening sequence, proving the desired result.
\end{proof}

We can then utilize Theorem \ref{thm:muller redd} to show that all reduced reddening sequences behave in such a way: always passing through the forkless part.

\begin{proposition}\label{prop:reddening-forks}
    Let $Q$ be any quiver that admits a reddening sequence.
    Then any reduced reddening sequence $\mathbf{N}$ of $Q$ with associated permutation $\sigma$ is a---possibly trivial---conjugation of a sequence of mutations of non-forks, i.e., $\mathbf{N} = \mathbf{MS\sigma(\mathbf{M}^{-1})}$ for sequences of mutations $\mathbf{M}$ and $ \mathbf{S}$, satisfying the following conditions:
    \begin{itemize}
        \item $\mu_{\mathbf{M}}(Q)$ is not a fork;

        \item every quiver in $[\mu_{\mathbf{M}}(Q)]_\mathbf{S}$ is also not a fork;

        \item the mutation sequence $\mathbf{S}$ is a reddening sequence for $\mu_{\mathbf{M}}(Q)$.
    \end{itemize}
    
\end{proposition}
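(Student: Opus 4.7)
The plan is to combine Lemma~\ref{lem:reddening forkless part} with Theorem~\ref{thm:muller redd}, splitting on whether $Q$ is itself a fork. If $Q$ is not a fork, the conjugation is trivial: I would set $\mathbf{M} = \emptyset$ (the empty sequence) and $\mathbf{S} = \mathbf{N}$, so that $\mathbf{N} = \mathbf{MS}\sigma(\mathbf{M}^{-1})$ holds literally. Then $\mu_{\mathbf{M}}(Q) = Q$ is a non-fork by assumption, $\mathbf{S}$ is a reddening sequence of $Q$, and Lemma~\ref{lem:reddening forkless part} applied directly to the reduced $\mathbf{N}$ guarantees every quiver in $[Q]_{\mathbf{S}}$ is a non-fork.

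If $Q$ is a fork, the plan is to produce a mutation sequence $\mathbf{M}$ with $\mu_{\mathbf{M}}(Q)$ a non-fork and then set $\mathbf{S} := \mathbf{M}^{-1}\mathbf{N}\sigma(\mathbf{M})$, replacing it by its reduction. By Theorem~\ref{thm:muller redd} applied in the direction from $\mu_{\mathbf{M}}(Q)$ back to $Q$, the sequence $\mathbf{S}$ is a reddening sequence of $\mu_{\mathbf{M}}(Q)$ with the same associated permutation $\sigma$ (since $\sigma$ only acts on the mutable vertices, which are the same for both quivers). Because $\mu_{\mathbf{M}}(Q)$ is a non-fork, Lemma~\ref{lem:reddening forkless part} forces every quiver in the reduced $[\mu_{\mathbf{M}}(Q)]_{\mathbf{S}}$ to be a non-fork, giving the second and third conclusions. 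The concatenation $\mathbf{MS}\sigma(\mathbf{M}^{-1}) = \mathbf{M}\mathbf{M}^{-1}\mathbf{N}\sigma(\mathbf{M})\sigma(\mathbf{M}^{-1})$ reduces to $\mathbf{N}$, since $\mathbf{M}\mathbf{M}^{-1}$ and $\sigma(\mathbf{M})\sigma(\mathbf{M}^{-1}) = \sigma(\mathbf{M}\mathbf{M}^{-1})$ both collapse to the empty sequence.

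The hard part will be establishing the existence of such an $\mathbf{M}$ in the fork case. The natural candidate is the shortest prefix of $\mathbf{N}$ for which $\mu_{\mathbf{M}}(Q)$ is not a fork; this works provided $\mathbf{N}$ exits the fork region at some stage. If instead every quiver in $[Q]_{\mathbf{N}}$ is a fork, I would aim for a contradiction via the Tree Lemma together with $\mu_{\mathbf{N}}(Q) \cong Q$: a reduced mutation sequence through forks has highly constrained behavior with respect to how and when the point of return can move, and this constraint should be incompatible with reddening $Q$ while returning to an isomorphic copy. Alternatively one can bypass $\mathbf{N}$ entirely and construct $\mathbf{M}$ by repeatedly mutating at the current point of return until escape, leveraging the hypothesis that $Q$ admits a reddening sequence to guarantee the mutation class contains non-forks at all.
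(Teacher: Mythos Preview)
Your overall strategy matches the paper's: split on whether $Q$ is a fork, take $\mathbf{M}$ to be the shortest prefix of $\mathbf{N}$ reaching the forkless part, invoke the Tree Lemma together with $\mu_{\mathbf{N}}(Q)\cong Q$ to see that such a prefix exists, and then use Theorem~\ref{thm:muller redd} and Lemma~\ref{lem:reddening forkless part} to handle $\mathbf{S}$. That part is fine and is exactly what the paper does.

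There is, however, a genuine gap. The statement asserts the \emph{literal} equality of sequences $\mathbf{N} = \mathbf{M}\mathbf{S}\sigma(\mathbf{M}^{-1})$, and you only establish that $\mathbf{M}\mathbf{S}\sigma(\mathbf{M}^{-1})$ \emph{reduces} to $\mathbf{N}$. With your definition $\mathbf{S} := \text{reduction of } \mathbf{M}^{-1}\mathbf{N}\sigma(\mathbf{M})$, the concatenation $\mathbf{M}\mathbf{S}\sigma(\mathbf{M}^{-1})$ will in general be strictly longer than $\mathbf{N}$ unless $\sigma(\mathbf{M}^{-1})$ is already the suffix of $\mathbf{N}$. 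The paper supplies exactly this missing step: since $\mu_{\mathbf{N}}(Q)=\sigma(Q)$ is a fork isomorphic to $Q$, the Tree Lemma forces the unique reduced path from $\mu_{\mathbf{N}}(Q)$ back to the forkless part to be $\sigma(\mathbf{M})$, so the terminal segment of the reduced sequence $\mathbf{N}$ must be $\sigma(\mathbf{M})^{-1}=\sigma(\mathbf{M}^{-1})$. Once you have that, your $\mathbf{S}$ is precisely the middle block of $\mathbf{N}$ and the equality is exact.

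Your alternative construction of $\mathbf{M}$ (iterating mutation at the point of return, independent of $\mathbf{N}$) would not close this gap: even if it lands in the forkless part, there is no reason for that $\mathbf{M}$ to be a prefix of $\mathbf{N}$, so you would again obtain only equality up to reduction rather than the decomposition the proposition asserts.
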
 

\begin{proof}
    If $Q$ is not a fork, then Lemma \ref{lem:reddening forkless part} proves the result immediately.
    Assume then that $Q$ is a fork, and let $\mathbf{M}$ be the shortest sequence of mutations resulting in a non-fork.
    Since $\mathbf{N}$ is reduced, the Tree Lemma \cite{warkentin2014exchange} and Corollary \ref{cor:reddening gives iso} tell us that the reddening sequence must enter the forkless part somewhere along the way.
    Thus $\mathbf{N} = \mathbf{MX}$ for some mutation sequence $\mathbf{X}$, as $\mathbf{M}$ will be the reduced portion entering the forkless part.
    By a similar argument, we know that $\mathbf{N} = \mathbf{MSV}$, where $\mathbf{V}$ is reverse of the shortest mutation sequence taking $\mu_{\mathbf{N}}(Q)$ to the forkless part.
    Let $\sigma$ be the associated permutation of the reddening sequence $\mathbf N$.
    By Corollary \ref{cor:reddening gives iso}, we know that $\mathbf{V} = \sigma(\mathbf{M}^{-1})$.
    The only assertion remaining is to show that $\mathbf{S}$ is a reddening sequence for $\mu_{\mathbf{M}}(Q)$ that remains in the forkless part.
    However, if we conjugate $\mathbf{N}$ by $\mathbf{M}^{-1}$, we get that 
    $$\mathbf{M}^{-1}\mathbf{N} \sigma(\mathbf{M})  = \mathbf{M}^{-1}\mathbf{M S V }\sigma(\mathbf{M}^{-1})^{-1}  = \mathbf{S}.$$
    Thus Theorem \ref{thm:muller redd} tells us that $\mathbf{S}$ must be a reddening sequence for $\mu_{\mathbf{M}}(Q)$. Lastly,  
    Lemma~\ref{lem:reddening forkless part} proves our two assertions on $\mathbf{S}$.
\end{proof}

\begin{remark} \label{rmk:reddening-pre-forks}
    A similar result holds for the case of pre-forkless part.
    This can be used to show that some quivers with a finite pre-forkless part, like Figure \ref{fig:box quiver}, do not admit a reddening sequence.
    
    \begin{figure}[ht]
        \centering
        \[\begin{tikzcd}[every arrow/.append style = {-{Stealth}}]
            1 & 2 \\
            4 & 3
            \arrow[from=1-1,to=1-2, "a"]
            \arrow[from=1-2,to=2-2, "b"]
            \arrow[from=2-2,to=2-1, "a"]
            \arrow[from=2-1,to=1-1, "b"]
        \end{tikzcd}\]
        \caption{Family of quivers with finite pre-forkless part for $a,b \geq 2$.}
        \label{fig:box quiver}
    \end{figure}
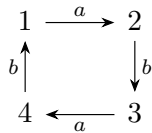
    
    It was additionally shown that each member of this family is mutation-cyclic in a similar manner \cite{warkentin2014exchange}.
\end{remark}

We return to surveying related results, as each quiver with a reddening sequence is now a building block to make mutation cycles.

\begin{lemma} \label{lem:abundant acyclic equality}
    Let $Q$ be an abundant-acyclic quiver on $n \geq 2$ vertices.
    If $\mathbf{N}$ is any reddening sequence, then the associated permutation is the identity.
    Additionally, there is exactly one reduced reddening sequence of $Q$: the reddening source sequence for $Q$. 
\end{lemma}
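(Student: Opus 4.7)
The plan is to establish uniqueness of the reduced reddening sequence as the RSS; the associated permutation claim then follows, because reducing any reddening sequence preserves $\mu_{\mathbf N}(\widehat Q)$ and hence the associated permutation, and the RSS has identity permutation by Example~\ref{eg:source seq} (each arrow is reversed exactly twice, so $\mu_{\mathbf S}(Q) = Q$ on the nose).

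For uniqueness, I would start with a reduced reddening sequence $\mathbf N$ and invoke Lemma~\ref{lem:reddening forkless part} together with the fact, cited in the remark after Definition~\ref{def:key}, that mutating an abundant-acyclic quiver yields either a fork or another abundant-acyclic quiver, to conclude that every quiver in $[Q]_{\mathbf N}$ is abundant-acyclic. The key combinatorial step is to show that if $R$ is abundant-acyclic and $v \in R$ is neither a source nor a sink, then $\mu_v(R)$ is a fork with point of return $v$. I would verify this by checking the fork axioms: deleting $v$ from $\mu_v(R)$ leaves an acyclic quiver, since mutation only increases multiplicities of arrows $u \to w$ for $u$ a predecessor and $w$ a successor of $v$ in $R$ (respecting the topological order on $R \setminus \{v\}$); and for any path $i \to v \to j$ in $\mu_v(R)$ (corresponding to $j \to v \to i$ in $R$) one has $b_{ji}^{\mu_v(R)} = b_{ji}^R + b_{jv}^R \, b_{vi}^R > \max(b_{jv}^R, b_{vi}^R)$, using $b_{jv}^R, b_{vi}^R \geq 2$ from abundance. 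Consequently every mutation in $\mathbf N$ is at a source or sink of the current quiver, and since each quiver in $[Q]_{\mathbf N}$ is complete (being abundant), its acyclic orientation linearly orders the vertices, producing a unique source and a unique sink.

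Reducedness of $\mathbf N$ then forces the trajectory: after the first mutation, at each subsequent step exactly one of the two (source or sink) is legal, the other being the just-mutated vertex. I would then case-split on the first mutation. If it is the source $v_1$ of $Q$, the forced trajectory is $v_1, v_2, \ldots, v_n$, which is the RSS; its $C$-matrix evolves diagonally from $I$ through $\operatorname{diag}(-1, 1, \ldots, 1), \operatorname{diag}(-1, -1, 1, \ldots, 1), \ldots$ to $-I$, achieving reddening at exactly step $n$. If the first mutation is the sink $v_n$, the forced trajectory is the sink sequence $v_n, v_{n-1}, \ldots$. To rule out continuations of the source case past length $n$ and to rule out the sink case entirely, I would argue by induction on the step count, using the mutation formula $c'_{ij} = c_{ij} + \operatorname{sign}(b_{ik})[b_{ik} c_{kj}]_+$ together with sign-coherence (Theorem~\ref{thm:all red or green}): in the sink-first trajectory exactly one vertex is red at every step (the most recently mutated), and in the post-RSS continuation exactly one vertex is green at every step (cycling through $v_1, v_2, \ldots$); in either case full reddening is never attained. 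Hence the RSS is the unique reduced reddening sequence. The main obstacle will be the two inductive $C$-matrix tracking arguments, though the pattern is already transparent from small examples; the forking claim for internal mutations and the source/sink forcing step are short direct verifications.
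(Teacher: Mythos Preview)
Your proposal is correct, and its first half coincides with the paper's argument: both use Lemma~\ref{lem:reddening forkless part} together with the fact that mutating an abundant-acyclic quiver at a non-source, non-sink vertex yields a fork (which you verify directly; the paper cites it) to conclude that every quiver in $[Q]_{\mathbf N}$ is abundant-acyclic, and then use completeness and reducedness to force the trajectory to be either the source sequence $v_1,v_2,\dots$ or the sink sequence $v_n,v_{n-1},\dots$.

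Where the two arguments diverge is in ruling out the sink-first trajectory and the post-RSS continuation. You do this by an explicit inductive $C$-matrix computation, showing that along the sink-first trajectory exactly one vertex is red at each step (the just-mutated one, which was the sink and hence green; the previously red vertex, now the source, is pushed green by the large contributions coming from abundance), and dually for the post-RSS continuation. The paper instead uses an embedding trick: adjoin a new sink vertex with $\geq 2$ arrows to every vertex of $Q$, forming an abundant-acyclic quiver $P$ on $n+1$ vertices; since $\mathbf N$ must also stay in the forkless part of $[P]$ and never touches the added vertex, it is forced to have length at most $n$, and combined with the trivial lower bound $|\mathbf N|\geq n$ this pins $\mathbf N$ down as the RSS with no $C$-matrix computation at all.

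Your route is more elementary and entirely self-contained, at the cost of the two inductive tracking arguments (which are straightforward once one sees that abundance makes the sign-flips go through). The paper's route avoids all computation by passing to a larger quiver whose forkless part has a simple enough shape to read off the length bound directly.
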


\begin{proof}
We may assume without loss of generality that $\mathbf{N}$ is reduced.
As $\mathbf{N}$ is reduced and $Q$ is not a fork, then Lemma~\ref{lem:reddening forkless part} shows that $\mathbf{N}$ does not pass through a fork.
Hence, every quiver in $[Q]_\mathbf{N}$ is abundant-acyclic \cite[Lemma 2.15]{MR4695532}, and the sequence~$\mathbf{N}$ 
is a sequence of source or sink mutations.
Add a sink to $Q$ with 2 or more arrows between it and every other vertex to form the quiver $P$.
Then $\mu_{\mathbf{N}}(P)$ is isomorphic to a copy of $P$ where the added vertex has its arrows flipped.
As $\mathbf{N}$ is reduced, we must remain in the forkless part of the mutation class of $P$.
Thus $\mathbf{N}$ is at most $n$ mutations long, as any longer mutation sequence would mutate at the added vertex, which $\mathbf{N}$ does not.
Since any reddening sequence of an acyclic quiver on $n$ vertices must be at least $n$ mutations long, we find that $\mathbf{N}$ must be a sequence of $n$ source mutations.
This is exactly the reddening source sequence for $Q$.
Therefore, we have that $\mu_{\mathbf{N}}(Q) = Q$.
\end{proof}

Combined with Proposition \ref{prop:reddening-forks}, we have an immediate corollary.

\begin{corollary} \label{cor:reddening-forks}
    Let $Q$ be a fork with point of return $r$ that admits a reduced reddening sequence $\mathbf{N} = \mathbf{MS}\sigma(\mathbf{M})^{-1}$ of $Q$ with associated permutation $\sigma$, as in Proposition \ref{prop:reddening-forks}.
    If $P$ is a full subquiver of $Q$, then there are three possibilities:
    \begin{itemize}
        \item If $P$ is abundant acyclic, then the reddening source sequence for $P$ is its only reduced reddening sequence;

        \item If $P$ is a fork and $\mu_{\mathbf{M^*}}(P)$ is abundant acyclic for $\mathbf M^*$ a subsequence of $\mathbf{M}$, then the only reduced reddening sequence for $P$ is $\mathbf{M^* S^*} (\mathbf{ M^*})^{-1}$, where $\mathbf{S^*}$ is the reddening source sequence for $\mu_{\mathbf{M^*}}(P)$;

        \item Otherwise, every reduced reddening sequence for $P$ begins with $\mathbf{M}$.
    \end{itemize}
\end{corollary}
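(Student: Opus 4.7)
The plan is to treat each case via Proposition~\ref{prop:reddening-forks} and Lemma~\ref{lem:abundant acyclic equality}, together with the fact that mutation commutes with taking full subquivers at vertices within the subquiver (so the evolution of $P$ under a mutation sequence on $Q$ at vertices of $V(P)$ coincides with the evolution of $P$ under the same mutations viewed as acting on $P$ alone).

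Case~1 is immediate: if $P$ is abundant acyclic, then Lemma~\ref{lem:abundant acyclic equality} already asserts that the unique reduced reddening sequence of $P$ is its reddening source sequence.

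For Case~2, assume $P$ is a fork and $\mathbf{M^*}$ is a subsequence of $\mathbf{M}$ with $\mu_{\mathbf{M^*}}(P)$ abundant acyclic. Applying Proposition~\ref{prop:reddening-forks} to $P$ gives that every reduced reddening sequence of $P$ has the form $\mathbf{M'}\mathbf{S'}\sigma_P((\mathbf{M'})^{-1})$, where $\mathbf{M'}$ is the shortest mutation sequence taking $P$ to a non-fork and $\mathbf{S'}$ is a reddening sequence of $\mu_{\mathbf{M'}}(P)$. The key step is to identify $\mathbf{M'}$ with $\mathbf{M^*}$: since $P$ is a full subquiver of $Q$ and $Q$ remains a fork throughout the proper prefixes of $\mathbf{M}$ (by the minimality property of $\mathbf{M}$ established in Proposition~\ref{prop:reddening-forks}), one tracks the fork-status of $P$ through each prefix of $\mathbf{M^*}$ and invokes the Tree Lemma \cite[Lemma~2.8]{warkentin2014exchange} to rule out shorter exits from forkhood. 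Once $\mathbf{M'}=\mathbf{M^*}$ is established, Case~1 applied to the abundant acyclic $\mu_{\mathbf{M^*}}(P)$ pins down $\mathbf{S'}=\mathbf{S^*}$ (the reddening source sequence) with associated permutation $\sigma_P=\mathrm{id}$, giving the claimed form $\mathbf{M^*}\mathbf{S^*}(\mathbf{M^*})^{-1}$.

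Case~3 covers the ``otherwise'' situation. If $P$ is not a fork (and not abundant acyclic), Lemma~\ref{lem:reddening forkless part} applies directly and constrains any reduced reddening sequence of $P$ to stay in the forkless part, so in particular it cannot start by leaving it. If $P$ is a fork but no subsequence of $\mathbf{M}$ takes $P$ to an abundant acyclic, Proposition~\ref{prop:reddening-forks} again decomposes any reduced reddening sequence of $P$ as $\mathbf{M'}\mathbf{S'}\sigma_P((\mathbf{M'})^{-1})$; the subquiver–parent compatibility forces $\mathbf{M'}$ to agree with a prefix of the restriction of $\mathbf{M}$ to $V(P)$, and the hypothesis of Case~3 forces this prefix to be the full restriction, so every reduced reddening sequence of $P$ begins with $\mathbf{M}$ in this sense.

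The main obstacle will be the identification $\mathbf{M'}=\mathbf{M^*}$ in Case~2 (and its analogue in Case~3): one must argue that the fork-status of the full subquiver $P$ along $\mathbf{M}$ is genuinely tracked by the relevant prefixes, so that no earlier mutation can force $P$ out of being a fork without also forcing $Q$ out, which would contradict the minimality of $\mathbf{M}$ from Proposition~\ref{prop:reddening-forks}. Establishing this careful compatibility between forks and their full subquivers is where the heavy lifting occurs, drawing on the Tree Lemma and the abundant-acyclic/fork structural results in \cite{MR4695532}.
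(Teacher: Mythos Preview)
Your overall strategy matches the paper's: apply Lemma~\ref{lem:abundant acyclic equality} for Case~1, and Proposition~\ref{prop:reddening-forks} together with Lemma~\ref{lem:abundant acyclic equality} for Cases~2 and~3. However, you are missing the one structural fact that makes the argument short, and as a result you both overcomplicate the proof and introduce an impossible subcase.

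The paper opens with the observation you never state: \emph{every full subquiver of a fork with point of return $r$ is either abundant acyclic or a fork with the same point of return $r$}. This immediately kills your Case~3 subcase ``$P$ is not a fork (and not abundant acyclic)''---that situation simply cannot occur. More importantly, it is precisely what collapses your anticipated ``heavy lifting.'' Since $P$ (when a fork) shares the point of return $r$ with $Q$, any reduced reddening sequence of $P$ must begin with $r=M_1$; then $\mu_r(P)$ is a full subquiver of $\mu_r(Q)$, so if it is a fork it again shares the point of return $M_2$; and so on. Thus the unique shortest path from $P$ to the forkless part is forced to be an initial segment of $\mathbf M$, terminating exactly when $P$ first becomes abundant acyclic. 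There is no need to ``rule out shorter exits'' via the Tree Lemma or to invoke a vague ``subquiver--parent compatibility'': the shared point of return already determines the path uniquely.

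With this in hand, Case~2 is immediate (the prefix is $\mathbf{M}^*$, the resulting abundant acyclic has RSS $\mathbf{S}^*$ with trivial permutation by Lemma~\ref{lem:abundant acyclic equality}, and Proposition~\ref{prop:reddening-forks} gives the form $\mathbf{M}^*\mathbf{S}^*(\mathbf{M}^*)^{-1}$), and Case~3 simply says that if $P$ never becomes abundant acyclic along $\mathbf M$, then every intermediate $\mu_{M_1\cdots M_i}(P)$ is a fork, so the path to the forkless part must extend through all of $\mathbf M$.
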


\begin{proof}
    As every subquiver of a fork with point of return $r$ is either abundant acyclic or a fork with point of return $r$, the quiver $P$ is either abundant acyclic or a fork.
    The first two points then follow from Lemma \ref{lem:abundant acyclic equality}, Proposition \ref{prop:reddening-forks}, and Theorem \ref{thm:muller redd}.
    The remaining result comes from Proposition \ref{prop:reddening-forks} and the assumption that $P$ does not reach an abundant acyclic quiver on the path given by $\mathbf{M}$---hence, every quiver on the path is also a fork.
\end{proof}

We can generalize the above result to the case of keys, where $b_{kk'} = 0$.

\begin{lemma} \label{lem:reddening sequences keys 0}
Let $Q$ be a key with vertices $k,k'$ such that $b_{kk'} = 0$ on $n \geq 3$ vertices.
If $\mathbf{N}$ is any reddening sequence, then the associated permutation is the identity.
\end{lemma}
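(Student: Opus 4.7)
The strategy parallels Lemma~\ref{lem:abundant acyclic equality}, adapted to the finite forkless part of a key's mutation class guaranteed by Theorem~\ref{thm:finiteKeys}.
Assume $\mathbf N$ is reduced.
Since $Q$ is acyclic (hence not a fork), Proposition~\ref{prop:reddening-forks} allows us to assume $\mathbf N$ lies entirely in the forkless part of $[Q]$.
Theorem~\ref{thm:finiteKeys} tells us that when $b_{kk'}=0$, this forkless part contains exactly $n-1$ keys, each uniquely identified by its source--sink pair.
Corollary~\ref{cor:reddening gives iso} gives $\mu_{\mathbf N}(Q) \cong Q$, and matching source--sink pairs forces $\mu_{\mathbf N}(Q) = Q$ as a labeled quiver up to the graph involution $\tau := (k, k')$, the only nontrivial graph automorphism fixing the (source, sink) pair.
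Hence $\sigma$ fixes every vertex outside $\{k, k'\}$ and $\sigma \in \{\mathrm{id}, \tau\}$.

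To exclude $\sigma = \tau$, I would track the rows of the $C$-matrix indexed by $k$ and $k'$ throughout $\mathbf N$.
Each quiver visited by $\mathbf N$ is either a key or a prefork, so the symmetric relations $b_{kk'}=0$ and $b_{kv} = b_{k'v}$ for $v \notin \{k, k'\}$ persist.
Consequently, the mutation rule for $C$-matrices updates rows $k$ and $k'$ by the same increment under any mutation at $v \notin \{k, k'\}$, while a mutation at $k$ (resp.\ $k'$) only negates row $k$ (resp.\ $k'$) and leaves the other untouched (because $b_{kk'}=0$).
Thus the difference $c_k - c_{k'}$ of the two rows is invariant under external mutations and changes only via internal ones.
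Combined with sign-coherence (Theorem~\ref{thm:all red or green}) and Theorem~\ref{thm:C matrices determine}, this structure should force the final rows to be $c_k = -e_k$ and $c_{k'} = -e_{k'}$, giving $\sigma = \mathrm{id}$.

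The main obstacle is a careful accounting of the internal mutations at $k$ and $k'$: each negates only the mutated row, and tracking the combined effect of several such negations interleaved with symmetric external updates requires care.
My plan is to use the combinatorial constraints from Theorem~\ref{thm:finiteKeys}---namely that the sequence must visit keys in a pattern compatible with their source--sink structure---to determine the number and order of mutations at $k$ and $k'$.
An alternative route is to apply $\tau$ directly to $\mathbf N$: since $\tau$ is an automorphism of both $Q$ and $\widehat Q$, the sequence $\tau(\mathbf N)$ is again a reddening sequence of $Q$ with associated permutation $\tau \sigma \tau$; if $\sigma = \tau$, then $\mathbf N$ and $\tau(\mathbf N)$ share both a $C$-matrix and an initial framed quiver, so Theorem~\ref{thm:C matrices determine} and the minimal-length characterization of the abundant-acyclic reddening source sequence (Lemma~\ref{lem:abundant acyclic equality}) applied to the subquivers $Q \setminus \{k\}$ and $Q \setminus \{k'\}$ should yield a contradiction, leaving only $\sigma = \mathrm{id}$.
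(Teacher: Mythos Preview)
Your plan has a genuine gap: the definition of a key (Definition~\ref{def:key}) only requires that the \emph{orientations} of arrows at $k$ and $k'$ agree, not their multiplicities. In general $b_{kv}\neq b_{k'v}$; see the key in Figure~\ref{fig:quiver types}, where $k=1,k'=3$ and $|b_{21}|=2\neq 4=|b_{23}|$. Consequently, mutation at $v\notin\{k,k'\}$ does \emph{not} update rows $k$ and $k'$ of the $C$-matrix by the same increment, so your tracking argument for $c_k-c_{k'}$ breaks down. The same issue invalidates the alternative route: since $\tau=(k,k')$ need not be an automorphism of $Q$ (or of $\widehat Q$), applying $\tau$ to $\mathbf N$ does not produce another reddening sequence of $Q$. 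Even the reduction to $\sigma\in\{\mathrm{id},\tau\}$ is shakier than it looks: you need that among the $n-1$ keys of Theorem~\ref{thm:finiteKeys}, only $Q$ itself is isomorphic to $Q$, and then that $\mathrm{Aut}(Q)\subseteq\{\mathrm{id},\tau\}$. Neither follows from the statements you cite, and both can fail when $Q$ has extra symmetry.

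The paper avoids all of this with a rigidification trick: it enlarges $Q$ to a key $P=I_1\stackrel{A}{\rightarrow}Q$ on $n+1$ vertices, choosing the column $A$ to have pairwise distinct entries $>1$ not appearing among the $b_{ij}$ of $Q$. Then $P$ is still a key with the same $k,k'$, and by Theorem~\ref{thm:extension red seq} the sequence $(n{+}1),\mathbf N$ is a reddening sequence of $P$ whose associated permutation $\sigma'$ restricts to $\sigma$ on the vertices of $Q$. The point of the distinct entries in $A$ is that $P$ has \emph{no} isomorphic copy elsewhere in its mutation class (Theorem~\ref{thm:finiteKeys} plus the fact that the multiset of weights at the added vertex is now an invariant), so Proposition~\ref{prop:red/green gives iso} forces $\sigma'=\mathrm{id}$, hence $\sigma=\mathrm{id}$. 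This is shorter and sidesteps both the $C$-matrix bookkeeping and any assumptions about $\mathrm{Aut}(Q)$.
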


\begin{proof}
    First, form a new quiver $P = I_1 \stackrel{A}{\rightarrow} Q$, where $I_1$ is the quiver with a single vertex and where $A$ has a single column consisting of distinct integers greater than~1 that do not appear as any $b_{ij}$ of $Q$.
    Then $P$ is a key with the same vertices $k, k'$ on $n+1$ total vertices.
    Label the vertex corresponding to $I_1$ in $P$ by $n+1$ and the vertices of $Q$ in $P$ by $1$ through $n$.
    Let $\mathbf{N}$ be any reddening sequence of $Q$ and $\sigma$ its associated permutation.
    By Theorem~\ref{thm:extension red seq}, the quiver $P$ has a reddening sequence formed by first mutating at $n+1$ and then performing $\mathbf{N}$.
    If $\sigma'$ is the associated permutation of this new reddening sequence on $P$, then we naturally have $\sigma(i) = \sigma'(i)$ for $1 \leq i \leq n$.
    By construction and Theorem~\ref{thm:finiteKeys}, there are no other isomorphic quivers in the mutation class of $P$ \cite{MR4695532}.
    Because there are no other isomorphic quivers, by Proposition~\ref{prop:red/green gives iso} we must have $\sigma'(i)=i$ for all $i$.
    Thus $\sigma(i)=i$, and so $\widecheck Q = \mu_\mathbf{N}(\widehat Q)$, naturally forcing $Q = \mu_\mathbf{N}(Q)$
\end{proof}

\begin{corollary}\label{cor:reddening sequences keys 1}
Let $Q$ be a key with vertices $k,k'$ such that $|b_{kk'}| = 1$ on $n \geq 3$ vertices.
If $\mathbf{N}$ is any reddening sequence, then either the associated permutation is the identity or the transposition given by swapping $k$ and $k'$.
\end{corollary}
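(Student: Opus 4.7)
The plan is to mirror the construction used in Lemma \ref{lem:reddening sequences keys 0}, adapting it to the case $|b_{kk'}| = 1$. Form the auxiliary quiver $P = I_1 \stackrel{A}{\rightarrow} Q$ where $A$ is an $n \times 1$ column of distinct integers, each at least $2$ and not appearing among the $|b_{ij}|$ of $Q$; denote the new vertex by $n+1$. A direct check shows that $P$ is a key on $n+1 \geq 4$ vertices with the same special pair $\{k, k'\}$ and $|b_{kk'}(P)| = |b_{kk'}(Q)| = 1$. The distinctness of the entries $a_i$ ensures that $P$ admits no nontrivial graph automorphism: any automorphism must preserve the unique source, hence fix $n+1$, and then fix each $i \in Q$ because the multiplicity $a_i$ of the arrow $n+1 \to i$ is uniquely identifying.

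By Theorem \ref{thm:extension red seq}, $\mathbf{N}^P := n{+}1, \mathbf{N}$ is a reddening sequence for $P$; let $\sigma'$ be its associated permutation and $\sigma$ the associated permutation of $\mathbf{N}$ on $Q$. Applying Theorem \ref{thm:extensions linear comb of framing} to the triangular extension $Q \stackrel{A^T}{\rightarrow} I_1$ produced by the initial mutation at $n+1$, I will compute that in $\mu_{\mathbf{N}^P}(P)$ the vertex $n+1$ still has only outgoing arrows to $Q$, with multiplicities obtained by permuting the $a_i$'s by $\sigma$. Since $n+1$ is the unique source of $P$ (hence of any isomorphic copy), matching this adjacency against $\sigma'(P)$ forces $\sigma'(n+1) = n+1$, and comparing multiplicities vertex-by-vertex (using that the $a_i$ are distinct) pins down $\sigma'|_Q = \sigma$.

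Next, I will invoke Theorem \ref{thm:finiteKeys} in the $|b_{kk'}| = 1$ case: the $2n$ keys in $[P]$ split into $n$ isomorphism pairs, each pair interchanged by the transposition $(k, k')$. In particular, the only quivers in $[P]$ abstractly isomorphic to $P$ are $P$ itself and $(k, k') \cdot P$. Since $\sigma'(P) = \mu_{\mathbf{N}^P}(P)$ is a key in $[P]$ isomorphic to $P$ via $\sigma'$, it must equal one of these two. The rigidity of $P$ then forces $\sigma' = \mathrm{id}$ in the first case and $\sigma' = (k, k')$ in the second. Restricting to $Q$ gives $\sigma \in \{\mathrm{id}, (k, k')\}$, which is the desired conclusion.

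The main obstacle is the careful bookkeeping needed to establish $\sigma'(n+1) = n+1$ and $\sigma'|_Q = \sigma$: this is where the distinct-entries construction and the linearity of framing under triangular extensions (Theorem \ref{thm:extensions linear comb of framing}) are essential. Once these identifications are in hand, the finite-keys dichotomy together with the rigidity of $P$ yields the two-valued conclusion exactly as in the $b_{kk'} = 0$ case.
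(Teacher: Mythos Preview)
Your proposal is correct and follows essentially the same route as the paper's proof: both construct $P = I_1 \stackrel{A}{\rightarrow} Q$ with distinct large entries, obtain the reddening sequence $n{+}1,\mathbf N$ for $P$ via Theorem~\ref{thm:extension red seq}, identify the associated permutation $\sigma'$ on $P$ with $\sigma$ on $Q$, and then use Theorem~\ref{thm:finiteKeys} together with the rigidity of $P$ to force $\sigma'\in\{\mathrm{id},(k,k')\}$. The main difference is one of explicitness: you justify $\sigma'(n{+}1)=n{+}1$ and $\sigma'|_Q=\sigma$ via the adjacency computation from Theorem~\ref{thm:extensions linear comb of framing} and the distinctness of the $a_i$, whereas the paper simply asserts $\sigma'(i)=\sigma(i)$ for $1\le i\le n$ ``naturally'' (as in Lemma~\ref{lem:reddening sequences keys 0}); you also spell out the rigidity argument that pins $\sigma'$ down once $\sigma'(P)\in\{P,\tau(P)\}$ is known.
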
 

\begin{proof}
    Argue as before in Lemma \ref{lem:reddening sequences keys 0}.
    However, by the construction of $P$ and Theorem~\ref{thm:finiteKeys}, there is one other isomorphic quiver in the mutation class of $P$ \cite{MR4695532}, namely $\tau(P)$.
    Hence, Proposition~\ref{prop:red/green gives iso} forces $\sigma'(i) = i$ for all $i \notin \{k,k'\}$.
    Thus $\sigma(i) = i$ for all $i \notin \{k,k'\}$.
    Let $\tau$ be the transposition given by swapping $k$ and $k'$.
    Then either $\widecheck Q = \mu_N(\widehat Q)$ or $\tau(\widecheck Q)  = \mu_N(\widehat Q)$, naturally forcing $Q = \mu_N(Q)$ or $\tau(Q) = \mu_N(Q)$.
\end{proof}

\begin{remark} \label{rmk:reddening sequences keys}
    Note that we do not state that there is exactly one reduced reddening sequence of $Q$ in Lemma \ref{lem:reddening sequences keys 0} as an infinite number of reduced reddening sequences can be constructed. 
    For example, take the quiver $Q$ given in figure~\ref{fig:infinite reduced example}.
    \begin{figure}[ht]
        \centering
        \[\begin{tikzcd}[every arrow/.append style = {-{Stealth}}]
            1 & 2 & 3 \\
            & 4 
            \arrow[from=1-2,to=1-1,"2"']
            \arrow[from=1-2,to=1-3,"2"]
            \arrow[from=2-2,to=1-1,"2"]
            \arrow[from=1-2,to=2-2,"2"]
            \arrow[from=2-2,to=1-3,"2"']
        \end{tikzcd}\]
        \caption{Key $Q$ with vertices $1,3$.} 
        \label{fig:infinite reduced example}
    \end{figure}
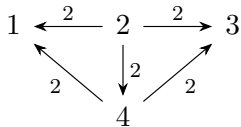
    Let ${\mathbf{N} = 4,1,3,1,3,4,2,4,3,1}$.
    Then both $\mathbf{N}$ and $2,4,3,1$ are easily seen to be reddening sequences.
    Following the structure of the mutation graph of a key additionally shows that an infinite number of reduced reddening sequences may be constructed in this manner \cite{MR4695532}. 
    This occurs as each quiver arrived at in this manner will have no arrows between vertices $1$ and $3$.
    As such, the mutations commute without reducing to a smaller sequence.
\end{remark}

\subsection{Reddening Sequences in Low Rank}

\begin{lemma} \label{lem:reddening sequences rank 2}
    For rank 2 quivers, there are only the following reduced reddening sequences. 
    Let $a$ be the number of arrows in our quiver. 
    Then
    \begin{itemize}
        \item If $a = 0$, then there are two reddening sequences of length two.
        All others can be shortened to a mutation sequence of length two by removing 4-cycles.

        \item If $a = 1$, then there are two reddening sequences: one of length 2 and the other of length 3.
        The first produces a mutable subquiver with equality, and the second produces a mutable subquiver equal up to isomorphism.
        All others can be shortened by removing a portion of the mutation sequence of length 5.

        \item If $a \geq 2$, then there is exactly one reddening sequence of length 2, which produces a mutable subquiver with equality.
        There are no other reddening sequences.
    \end{itemize}
\end{lemma}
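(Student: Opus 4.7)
The plan is to split into cases by the number of arrows $a$ and to work directly with the framed quiver $\widehat Q$ in the two small cases, reserving the main structural lemma for the third. Throughout I take $Q$ to have vertex set $\{1,2\}$; since the quiver on two vertices is acyclic, I may assume WLOG that if $a \geq 1$ then $1 \to 2$ with multiplicity $a$.

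For the case $a = 0$, the framed quiver $\widehat Q$ consists only of the arrows $1 \to 1'$ and $2 \to 2'$, and these two ``pillars'' never interact under mutation. Explicitly, $\mu_i$ acting on any quiver in $[\widehat Q]$ only toggles the orientation of the arrow between $i$ and $i'$, and never introduces arrows between vertices $1$ and $2$. Thus, in any mutation sequence, vertex $i$ is red precisely when $i$ appears an odd number of times. A reduced sequence on two vertices is alternating, so has the form $1,2,1,\dots$ or $2,1,2,\dots$, and has both vertex counts odd only when its length is $2k$ with $k$ odd. The $k=1$ case gives the two sequences of length two; for $k \geq 3$, the subsequence $i,j,i,j$ at the beginning (or any four consecutive entries of alternating pattern) acts as the identity on $\widehat Q$ by the above toggling, and so can be removed to yield an equivalent shorter reddening sequence. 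This justifies the ``remove 4-cycles'' claim.

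For the case $a = 1$, the quiver $Q$ is of finite type $A_2$. Since the mutation class of $\widehat Q$ is finite, a direct computation enumerates all $C$-matrices; the starting one is $I$ and $-I$ can be reached in exactly two ways by a reduced sequence, namely by $1,2$ (the RSS, which gives equality on the mutable part by Example~\ref{eg:source seq}) and by $2,1,2$. The latter sequence produces a quiver whose mutable part is $2 \to 1$, isomorphic but not equal to $Q$; this can be verified by one explicit mutation calculation on $\widehat Q$, or deduced from Proposition~\ref{prop:red/green gives iso} combined with Theorem~\ref{thm:C matrices determine}. For the shortening claim, the Zamolodchikov periodicity for $A_2$ gives that the alternating mutation sequence of length $5$ acts as the identity on $\widehat Q$ (up to the nontrivial permutation swapping $1$ and $2$; iterating gives a genuine identity). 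Hence any reduced reddening sequence not of length $2$ or $3$ must contain five consecutive alternating mutations in its interior or at an end, which can be excised to produce a strictly shorter reddening sequence.

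For the case $a \geq 2$, the quiver $Q$ is abundant and acyclic by definition (Definition~\ref{def:abundant}), so Lemma~\ref{lem:abundant acyclic equality} applies immediately and gives exactly one reduced reddening sequence — the RSS of length $2$ — with equality on the mutable part. The main obstacle is the $a = 1$ case, because it is the only one where both the iso-only permutation behavior and the period-$5$ shortening phenomenon appear, and both claims require small but explicit bookkeeping in $\widehat Q$; the obstacle is manageable since the entire mutation class is finite. The other two cases are essentially immediate once one notes the appropriate structural input (the decoupling of mutation when $a=0$, and Lemma~\ref{lem:abundant acyclic equality} when $a \geq 2$).
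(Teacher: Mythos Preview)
Your proof is correct and follows the same approach as the paper's: the $a=0$ case is handled directly (you spell out the toggling argument the paper leaves implicit), $a\geq 2$ is exactly Lemma~\ref{lem:abundant acyclic equality}, and $a=1$ uses the period-$5$ behaviour of $\widehat{A_2}$ (up to the swap $(1,2)$) for both identifying the two short reddening sequences and the shortening claim. The only place either argument is slightly loose is the ``excise five'' step---removing the first five mutations from, say, $1,2,1,2,1,2,1$ leaves $2,1$, which is a reddening sequence for $\sigma(Q)$ rather than $Q$; removing from the end always works, so you should specify that (the paper's phrasing ``beginning or end'' has the same imprecision).
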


\begin{proof}
    The first result follows immediately.
    The last result follows directly from Lemma \ref{lem:abundant acyclic equality}.
    All that remains is the middle result.
    Assume that $Q$ is the quiver given in Figure \ref{fig:framed a2}.
    
    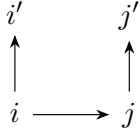
\begin{figure}[ht]
        \centering
        \[\begin{tikzcd}[every arrow/.append style = {-{Stealth}}]
            i' & j' \\
            i & j
            \arrow[from=2-1,to=2-2]
            \arrow[from=2-1,to=1-1]
            \arrow[from=2-2,to=1-2]
        \end{tikzcd}\]
        \caption{A framed copy of $A_2$.}
        \label{fig:framed a2}
    \end{figure}
    
    The reddening sequence of length 2 is just the sequence $i,j$.
    The reddening sequence of length 3 results from the sequence $j,i,j$.
    To see why this is a reddening sequence, note that $j,i,j$ and $i,j,i,j,i,j,i$ both reach the same quiver.
    As $i,j,i,j,i$ produces an isomorphic copy of $Q$, further mutating at $j,i$ performs a reddening sequence for this copy.
    Finally, any reddening sequence of greater length can be shortened to one of the two sequences by removing subsequences of length 5 from the beginning or end of the mutation sequence.
    Each such removal alternates the resulting reddening sequence between equality of the mutable subquiver and an isomorphic copy.
\end{proof}

Naturally, Lemma \ref{lem:reddening sequences rank 2} leads us to a question: does $\big [A_2\big ]$ have to embed into our quiver for a reddening sequence to not produce equality of the mutable subquiver?
This is indeed the case in rank 2, but we can extend it to rank 3 as well.

\begin{lemma} \label{lem:reddening sequences rank 3}
    Let $Q$ be a quiver on 3 vertices.
    If $Q$ admits a reddening sequence and $\big [A_2 \big ]$ does not embed into $\big [ Q \big ]$, then every reddening sequence ends with equality of the mutable subquiver.
\end{lemma}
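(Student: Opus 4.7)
The plan is to use Proposition~\ref{prop:reddening-forks} to reduce any reddening sequence to one staying inside the forkless part, then argue by cases on the combinatorial type of the resulting quiver. Let $\mathbf N$ be any reduced reddening sequence of $Q$ with associated permutation $\sigma$. Apply Proposition~\ref{prop:reddening-forks} to write $\mathbf N = \mathbf M\mathbf S\sigma(\mathbf M^{-1})$, where $Q_1 := \mu_{\mathbf M}(Q)$ is a non-fork and $\mathbf S$ is a reddening sequence of $Q_1$ that stays entirely in the forkless part. Since conjugation preserves the associated permutation (by the same computation that underlies Theorem~\ref{thm:muller redd}), it suffices to show that every such $\mathbf S$ has identity permutation. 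The hypothesis that $[A_2]\not\hookrightarrow[Q]$ further guarantees that every arrow in any mutation-equivalent quiver has weight either $0$ or $\ge 2$.

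\textbf{Case analysis on $Q_1$.} If $Q_1$ is disconnected, decompose $\mathbf S$ into independent reddening sequences on components of rank $\le 2$, since mutations commute with restriction to a connected component. Each component is either rank $1$ (trivial) or rank $2$ with either no arrows or $\ge 2$ arrows (the single-arrow case is excluded by hypothesis). In either situation, Lemma~\ref{lem:reddening sequences rank 2} gives identity associated permutation on that component, so $\sigma=\mathrm{id}$. If $Q_1$ is connected and has three edges, all of weight $\ge 2$, then $Q_1$ is abundant; being a non-fork forces $Q_1$ to be abundant-acyclic by \cite[Lemma~2.15]{MR4695532} (the same fact invoked in the proof of Lemma~\ref{lem:abundant acyclic equality}), and Lemma~\ref{lem:abundant acyclic equality} yields $\sigma=\mathrm{id}$.

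\textbf{Two-edge case.} The remaining case is that $Q_1$ is connected with only two edges. Label the middle vertex $v$ and the outer vertices $u,w$, so the arrow between $u$ and $w$ is absent. A short direct computation shows that if $v$ is a pass-through (one incident edge oriented in, one out), then $\mu_v(Q_1)$ creates an arrow of weight $\ge 4$ between $u$ and $w$ and satisfies the fork inequality at $v$; hence $\mu_v(Q_1)$ is a fork with point of return $v$. Consequently, inside the forkless part one never mutates at $v$ in the pass-through situation, and any allowed mutation (at $u$, at $w$, or at $v$ when $v$ is a source/sink) preserves the two-edge structure with the same missing pair. Thus the forkless part of $[Q_1]$ restricted to this pair consists of only four mutable subquivers (the two pass-throughs, the source, and the sink), on which the $C$-matrix dynamics can be tracked explicitly via the mutation formula; an inspection shows that the only way to reach an all-red $C$-matrix is via a source-sequence ordering of $Q_1$, so $C_{\mathbf S}=-I$ and $\sigma=\mathrm{id}$. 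Equivalently, realize $Q_1$ as a triangular extension of two rank $\le 2$ quivers each with identity reddening permutation (e.g.\ $T=\{v\},H=\{u,w\}$ in the source/sink situation, or $T=\{u\},H=\{v,w\}$ in the pass-through situation), and combine Proposition~\ref{prop:extensions with equality give cycles} with the $C$-matrix uniqueness of Theorem~\ref{thm:C matrices determine} to conclude.

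\textbf{Main obstacle.} The hardest case is the two-edge one, precisely because Lemma~\ref{lem:abundant acyclic equality} does not apply directly (the quiver is not abundant). The cleanest closure requires either a careful finite enumeration of the $C$-matrices on the (very small) forkless mutation subgraph of two-edge quivers, or a uniform triangular-extension argument. A secondary subtlety is the abundant case: if $Q_1$ were abundant and mutation-cyclic, then by the known behavior of rank-3 mutation-cyclic quivers (e.g.\ Markov-type) it should admit no reddening sequence, making the case vacuous; citing or verifying this may be needed for completeness.
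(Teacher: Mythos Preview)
Your overall skeleton---reduce to a non-fork via Proposition~\ref{prop:reddening-forks}, then case-split---matches the paper. The disconnected and abundant-acyclic cases are handled correctly. The gap is in the two-edge case, which you yourself flag as the main obstacle.

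Your ``inspection'' claim that the only way to reach an all-red $C$-matrix is via a source ordering is not justified, and the finiteness heuristic behind it is misleading. Although there are only four underlying mutable quivers in the forkless part, the framed quivers are not finite: the walk $u,w,v,u,w,v,\ldots$ stays in the forkless part indefinitely while the $C$-matrix entries grow without bound (try it starting from the $v$-source quiver: after $u,w,v$ one already has rows like $(a^2-1,a,ab)$). So there is no finite enumeration to inspect. Your fallback triangular-extension argument does not close the gap either: Proposition~\ref{prop:extensions with equality give cycles} exhibits one reddening sequence with identity permutation, but says nothing about the associated permutation of an arbitrary reddening sequence~$\mathbf S$.

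The paper avoids this direct analysis entirely. Once reduced to an acyclic representative, a connected $3$-vertex quiver with no single arrows and exactly one missing edge is, after possibly one sink/source mutation, precisely a key with $b_{kk'}=0$ (take $k,k'$ to be the two non-adjacent vertices). Lemma~\ref{lem:reddening sequences keys 0} then gives the identity permutation for \emph{every} reddening sequence; its proof works by embedding into a larger key whose mutation class has no nontrivially isomorphic copies, which is exactly the uniqueness input your argument is missing. You should invoke that lemma here rather than attempt an ad hoc $C$-matrix computation.

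Finally, the ``secondary subtlety'' you raise is dispatched in the paper by the one-line observation that a rank-$3$ quiver admitting a reddening sequence is necessarily mutation-acyclic; this is standard and needs no further argument.
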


\begin{proof}
    If $Q$ admits a reddening sequence, then $Q$ must be mutation-acyclic, as $Q$ has only three vertices.
    If $Q$ is additionally mutation-abundant, then any reddening sequence of $Q$ is a conjugation of the reddening sequence given in Lemma \ref{lem:abundant acyclic equality} by Corollary \ref{cor:reddening gives iso}.
    As we are assuming that $\big [A_2\big ]$ does not embed in $\big [ Q \big ]$, we may assume without loss of generality that $Q$ is a key with $b_{kk'}= 0$.
    Then Lemma \ref{lem:reddening sequences keys 0} shows that any reddening sequence ends with equality of the mutable subquiver, proving our result.
    \hide{ as given in Figure \ref{fig:rank 3 proof}. Note that vertex $2$ is a source.
    \begin{figure}[ht]
        \centering
        \[\begin{tikzcd}[every arrow/.append style = {-{Stealth}}]
            1 & 2 & 3 
            \arrow[from=1-2,to=1-1,"a"']
            \arrow[from=1-2,to=1-3,"b"]
        \end{tikzcd}\]
        \caption{$Q$ for $a,b \geq 2$}
        \label{fig:rank 3 proof}
    \end{figure}

    Consider the $4$-vertex quiver $I_1 \stackrel{(2,3,4)}{\longrightarrow} Q$ with $I_1$ the quiver with a single vertex, which we label $4$.
    Suppose that $Q$ has a reddening sequence  $\mathbf{M}$ with associated permutation $\sigma$.
    By Theorem~\ref{thm:extension red seq}, then $I_1 \stackrel{(2,3,4)}{\longrightarrow} Q$ has a reddening sequence formed by first mutating at $4$ and mutating at $\mathbf{M}$, and let $\sigma'$ be the associated permutation.
    Note that $\sigma'(i)=\sigma(i)$ for $i=1,2,3$. 
    The quiver $I_1 \stackrel{(2,3,4)}{\longrightarrow} Q$ is a key (Definition~\ref{def:key}).
    By construction and Theorem~\ref{thm:finiteKeys}, there are no other isomorphic quivers in its mutation class (\cite{MR4695532}).
    Because there are no other isomorphic quivers, by Proposition~\ref{prop:red/green gives iso} we must have $\sigma'(i)=i$ for all $i$.
    Thus $\sigma(i)=i$, and so $\widecheck Q = \mu_M(\widehat Q)$.}
\end{proof}

Further evidence comes from abundant acyclic quivers and keys, as shown in Lemmas \ref{lem:abundant acyclic equality} and \ref{lem:reddening sequences keys 0} and Corollary \ref{cor:reddening sequences keys 1}.

\begin{corollary} \label{cor:red-seq-aa-keys}
    Let $Q$ be an abundant acyclic quiver or a key.
    If $P$ is mutation-equivalent to $Q$ and $\big [A_2 \big ]$ does not embed into $\big [ P \big ]$, then every reddening sequence ends with equality of the mutable subquiver.
\end{corollary}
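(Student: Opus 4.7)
The plan is to reduce, via Theorem~\ref{thm:muller redd}, the claim for $P$ to the analogous claim for $Q$, and then apply either Lemma~\ref{lem:abundant acyclic equality} or Lemma~\ref{lem:reddening sequences keys 0} depending on which structural hypothesis $Q$ satisfies.

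First I would establish that the set of associated permutations realized by the reddening sequences of a quiver is an invariant of its mutation class. Fix a sequence $\mathbf{M}$ with $\mu_{\mathbf{M}}(Q)=P$. If $\mathbf{S}$ is a reddening sequence of $Q$ with associated permutation $\sigma$, Theorem~\ref{thm:muller redd} asserts that $\mathbf{M}^{-1}\mathbf{S}\sigma(\mathbf{M})$ is a reddening sequence of $P$, and the short computation
\[
\mu_{\mathbf{M}^{-1}\mathbf{S}\sigma(\mathbf{M})}(P) \;=\; \mu_{\sigma(\mathbf{M})}\bigl(\mu_{\mathbf{S}}(Q)\bigr) \;=\; \mu_{\sigma(\mathbf{M})}(\sigma(Q)) \;=\; \sigma(P)
\]
shows its associated permutation is again $\sigma$. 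Conversely, applying Theorem~\ref{thm:muller redd} to any reddening sequence $\mathbf{N}$ of $P$, conjugated by $\mathbf{M}^{-1}$, yields a reddening sequence of $Q$ with the same associated permutation as $\mathbf{N}$. Hence the two sets of permutations coincide, and it suffices to prove that every reddening sequence of $Q$ itself has identity associated permutation.

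The case analysis on $Q$ is then brief. If $Q$ is abundant acyclic, Lemma~\ref{lem:abundant acyclic equality} is immediate. If instead $Q$ is a key with distinguished pair $\{k,k'\}$, I would split on the magnitude of $b_{kk'}$: when $|b_{kk'}|\geq 2$, the key $Q$ is itself abundant acyclic and the previous case applies; when $b_{kk'}=0$, Lemma~\ref{lem:reddening sequences keys 0} gives the trivial associated permutation; when $|b_{kk'}|=1$, the full subquiver of $Q$ on $\{k,k'\}$ is already a copy of $A_2$, contradicting the hypothesis that $[A_2]$ does not embed into $[P]=[Q]$. So this final subcase cannot occur.

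The only step requiring genuine care is the first: tracking the associated permutation through the conjugation of Theorem~\ref{thm:muller redd}, and verifying that every reddening sequence of $P$ arises as a conjugate of one of $Q$ (not merely the other way around). Once that bookkeeping is in hand, the rest is a direct appeal to the earlier lemmas.
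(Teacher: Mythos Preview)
Your approach is essentially the paper's own: the corollary is stated there as an immediate consequence of Lemmas~\ref{lem:abundant acyclic equality} and~\ref{lem:reddening sequences keys 0} together with Corollary~\ref{cor:reddening sequences keys 1}, and you supply exactly that reduction---using Theorem~\ref{thm:muller redd} to transport the question from $P$ back to $Q$, and observing that the $|b_{kk'}|=1$ subcase is excluded by the $A_2$ hypothesis.

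One small point of care: your displayed computation $\mu_{\mathbf{M}^{-1}\mathbf{S}\sigma(\mathbf{M})}(P)=\sigma(P)$ lives at the level of mutable quivers, and from this alone you cannot read off the \emph{associated permutation} of the conjugated sequence when the quiver has nontrivial automorphisms, since distinct permutations may act identically on the mutable part. (Keys with $b_{kk'}=0$ and $b_{jk}=b_{jk'}$ for all $j$ do admit the automorphism $(k,k')$, so this is not vacuous.) The assertion that conjugation preserves the associated permutation is nonetheless correct, but its justification requires tracking $C$-matrices or framed quivers, as in the proof underlying Theorem~\ref{thm:muller redd} in~\cite{MR3512669}. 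With that strengthening of the bookkeeping step, your argument is complete.
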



This leads to the following conjecture.

\begin{conjecture}
\label{conj:A2s do all}
For rank 4 and above quivers, a reddening sequence produces the coframed quiver whenever $\big [A_2\big ]$ does not embed into our quiver's mutation class.
\end{conjecture}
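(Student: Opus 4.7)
My plan is to prove the contrapositive: if $Q$ has rank $n \geq 4$ and admits a reddening sequence $\mathbf{N}$ whose associated permutation $\sigma$ is nontrivial, then $[A_2]$ embeds into $[Q]$---that is, some quiver mutation-equivalent to $Q$ has a pair of vertices joined by exactly one arrow. This framing mirrors the successful strategy for rank 3 (Lemma~\ref{lem:reddening sequences rank 3}) and for abundant acyclic quivers and keys in arbitrary rank (Corollary~\ref{cor:red-seq-aa-keys}), where excluding an embedded $A_2$ directly forced the quiver into a rigid type whose reddening sequences must have trivial associated permutation. As a first reduction, I would use Proposition~\ref{prop:reddening-forks} to replace $\mathbf{N}$ by a conjugate reddening sequence lying entirely in the forkless part of the mutation graph of $\mu_{\mathbf{M}}(Q)$ for some prefix $\mathbf{M}$; by Theorem~\ref{thm:muller redd} this conjugation preserves the cycle structure of $\sigma$, so it suffices to treat forkless reddening sequences.

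Next, I would attempt the following structural classification: in any mutation class containing no embedded $[A_2]$, every forkless quiver should be either abundant acyclic, or a key with $b_{kk'}=0$, or (speculatively) a triangular extension of smaller such pieces. The case $|b_{kk'}|=1$ is automatically excluded, since then $\{k,k'\}$ already forms a full $A_2$ subquiver, contradicting the hypothesis. If this classification holds, Lemma~\ref{lem:abundant acyclic equality} and Lemma~\ref{lem:reddening sequences keys 0} force $\sigma=\text{id}$ in the base cases, and Proposition~\ref{prop:extensions with equality give cycles} together with induction on rank handles triangular extensions. To establish the classification itself, I would track $B$-matrix entries along $\mathbf{N}$: creating a single arrow between $u,w$ via a mutation at $v$ requires the exchange quantity $|b_{uw} \pm b_{uv}b_{vw}|$ to equal $1$, so ruling this out globally across the mutation class should limit the admissible configurations to the rigid types above.

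The main obstacle is precisely this global rigidity. Mutation does not commute with restriction to a proper full subquiver when the mutated vertex lies outside that subquiver, so one cannot simply restrict $\mathbf{N}$ to a rank-3 subquiver and invoke the known rank-3 case; a genuinely global argument is required. A promising route is to analyze the $c$-vector dynamics along $\mathbf{N}$ via Theorem~\ref{thm:C matrices determine} and Corollary~\ref{cor:oris follow}: a nontrivial $\sigma$ forces a nontrivial permutation in the terminal $c$-vector configuration, and one hopes to show, perhaps by induction on the size of the smallest $\sigma$-orbit, that any such permutation must at some intermediate stage produce a single-arrow pair---mirroring the mechanism by which the length-$3$ sequence $j,i,j$ produces the swap in rank 2 (Lemma~\ref{lem:reddening sequences rank 2}). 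Making this intuition rigorous in ranks $\geq 4$, where $c$-vectors enjoy much greater freedom and the Tree Lemma alone no longer controls the combinatorics, is the central open difficulty; it is also likely where a genuinely new idea, beyond the forkless/$C$-matrix toolkit already deployed in the paper, will be needed.
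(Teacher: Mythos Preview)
The statement you are attempting to prove is labeled a \emph{conjecture} in the paper, and the paper does \emph{not} provide a proof. The surrounding text only offers supporting evidence in low rank and in special families (Lemmas~\ref{lem:abundant acyclic equality}, \ref{lem:reddening sequences keys 0}, \ref{lem:reddening sequences rank 2}, \ref{lem:reddening sequences rank 3}, Corollary~\ref{cor:red-seq-aa-keys}), then explicitly flags rank~$4$ as ``where trouble starts to arise'' (Example~\ref{eg:dreaded-torus-sec-3}). So there is no paper proof to compare against, and your proposal cannot be judged as agreeing with or differing from one.

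Regarding your proposal itself: it is a strategy outline rather than a proof, and you correctly identify its own gap. The central step---your proposed structural classification that every forkless quiver in a mutation class with no embedded $[A_2]$ is abundant acyclic, a key with $b_{kk'}=0$, or a triangular extension of such pieces---is unproven and, as far as the paper indicates, not known. The rank-$3$ argument in Lemma~\ref{lem:reddening sequences rank 3} works precisely because the forkless part of a rank-$3$ mutation-acyclic class \emph{is} completely classified; no analogous classification is available in higher rank. Your fallback idea of tracking $c$-vector dynamics to force a single-arrow pair is plausible as a heuristic, but you yourself note that making it rigorous would require a new idea beyond the fork/$C$-matrix machinery in the paper. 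In short, your proposal accurately reflects the state of the problem: the conjecture is open, the obvious line of attack runs into a missing structural theorem, and no complete argument is offered.
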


\begin{example} \label{eg:dreaded-torus-sec-3}
    The four vertex threshold is where trouble starts to arise.
    Take, for example, the Dreaded Torus given in Figure \ref{fig:dreaded-torus-sec-3}.
    The quiver has many nice properties: it is mutation-finite; it admits a reddening sequence; and it is the only quiver in its mutation-class up to isomorphism.
    Experimentally, we have found that there are many distinct reddening sequences for the Dreaded Torus.
    In particular, we have been able to construct a reddening sequence for every permutation we have tried.
    
    \begin{figure}[ht]
        \centering
        \[\begin{tikzcd}[every arrow/.append style = {-{Stealth}}]
        & 4 \\
        & 3 \\
        1 & & 2
        \arrow[from=2-2, to=1-2, "2"]
        \arrow[from=3-1, to=2-2]
        \arrow[from=3-3, to=2-2]
        \arrow[from=1-2, to=3-1]
        \arrow[from=3-1, to=3-3]
        \arrow[from=1-2, to=3-3]
        \end{tikzcd}\]
        \caption{The Dreaded Torus (the quiver associated to the triangulation of a once punctured torus).}
        \label{fig:dreaded-torus-sec-3}
    \end{figure}
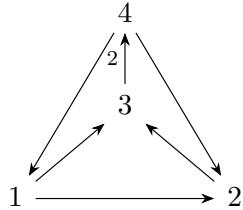
    
\end{example}

\hide{
\begin{remark}
More generally, one could ask which sets of permutations are associated to the reddening sequences of a given quiver. 
Our conjecture is equivalent to saying that if $\big [ A_2 \big ]$ does not embed then this set is trivial.
As shown in Example~\ref{fig:dreaded torus}, this set can be quite large. \cS{Should check: does the Dreaded Torus have the entirety of $S_4$?}
Surprisingly, in every example this set of permutations seems to form a group (if non-empty). 
\end{remark}
\cS{Is the identity always in this set? Is it a group?}
} 

\newpage

\section{Examples} 
\label{sec:examples}

Building on Example~\ref{eg:dreaded-torus-sec-3}, we briefly describe many quivers with reddening sequences from the literature, and give concrete examples.
We then illustrate Propositions~\ref{prop:extensions with equality give cycles}, ~\ref{prop:extensions without equality give cycles} and Corollary~\ref{cor:extensions give iso cycles} by combining our various examples.

Most of these examples come from a broad family of quivers with reddening sequences. In each case, we first give an explicit example of a quiver from the family, a reddening sequence (or sequences), and the associated permutation(s). 
We then remark on relevant terminology or properties of the quiver, and relate these to where else the reddening sequence has appeared in the literature.
Most of this terminology we do not define rigorously, details may be found in the references.
We will restate some theorem(s) regarding this class of reddening sequences before moving on to the next example.

\begin{example}
\label{eg:keyRed}
Let $K$ be the $3$-vertex quiver shown in Figure \ref{fig:first example}.
\begin{figure}[ht]
    \centering
    \[\begin{tikzcd}[every arrow/.append style = {-{Stealth}}]
         & 2 & \\
        1 &  & 3 
        \arrow[from=2-1,to=1-2,"35"]
        \arrow[from=1-2,to=2-3,"4"]
        \arrow[from=2-3,to=2-1,"9"]
    \end{tikzcd}\]
    \caption{Quiver mutation-equivalent to a key.}
    \label{fig:first example}
\end{figure}
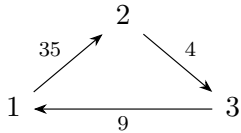

Then $K$ is mutation equivalent to a key $K'$ with arrows $1 \rightarrow 2 \stackrel{4}{\rightarrow} 3 \stackrel{5}{\leftarrow} 1$ (therefore $k,k' = 1,2$).
Specifically, $ K = \mu_{2,3}(K').$
The quiver $K$ has reddening sequences $\mathbf{M} = 3,2,1,2,3,2,3$ and $\mathbf{M}' = 3,2,1,2,3,1,2,1,2,3$, with associated permutations the identity and the transposition $\tau = (1,2)$ respectively.
\end{example}

\begin{remark}
Quivers that are mutation equivalent to an acyclic quiver (such as~$K$) are called \emph{mutation-acyclic}.
Because the RSS of an acylic quiver is a reddening sequence (Example~\ref{eg:source seq}), all mutation-acyclic quivers have a reddening sequence.
\end{remark}

\begin{corollary}[{Theorem~\ref{thm:muller redd}}]
\label{cor:acyclicRed}
Let $H$ be an acyclic quiver with RSS~$\mathbf S$.
Then for any mutation sequence $\mathbf M$, the reduction of the sequence $\mathbf M^{-1} \mathbf S \mathbf M$ is a reddening sequence (with associated permutation the identity) of $\mu_{\mathbf M}(H).$ 
\end{corollary}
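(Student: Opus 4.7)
The plan is to directly invoke Theorem~\ref{thm:muller redd} once we establish that an RSS has trivial associated permutation. First, recall from Example~\ref{eg:source seq} that if $\mathbf{S}$ is an RSS of an acyclic quiver $H$, then performing each source mutation in $\mathbf{S}$ on $\widehat{H}$ consists only of reversing the arrows incident to the mutated vertex (including the arrow $i \to i'$ to its frozen partner), since each mutation is at a source in the current quiver. After all $|H|$ such mutations, every arrow between mutable vertices has been reversed twice (returning $H$ to itself), and every framing arrow $i \to i'$ has been reversed exactly once. Hence $\mu_{\mathbf{S}}(\widehat{H}) = \widecheck{H}$ \emph{as labeled quivers}, so the associated permutation of $\mathbf{S}$ is the identity.

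With $\sigma = \mathrm{id}$, Theorem~\ref{thm:muller redd} immediately gives that $\mathbf{M}^{-1} \mathbf{S}\, \sigma(\mathbf{M}) = \mathbf{M}^{-1} \mathbf{S}\, \mathbf{M}$ is a reddening sequence of $\mu_{\mathbf{M}}(H)$. To pass from this sequence to its reduction, I would observe that mutation is an involution, so cancelling adjacent duplicates $i,i$ in a mutation sequence does not change the quiver it produces; in particular, the reduced sequence also sends $\widehat{\mu_{\mathbf{M}}(H)}$ to the same quiver, which has all mutable vertices red, so it too is a reddening sequence.

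Finally, to check that the associated permutation of the reduced sequence on $\mu_{\mathbf{M}}(H)$ is still the identity, I would compute $\mu_{\mathbf{M}^{-1}\mathbf{S}\mathbf{M}}(\widehat{\mu_{\mathbf{M}}(H)})$ step by step: applying $\mathbf{M}$ recovers $\widehat{H}$ (up to the first $|H|$ columns of the framing agreeing by naturality of framing under mutation), then $\mathbf{S}$ yields $\widecheck{H}$ exactly, and then $\mathbf{M}^{-1}$ gives $\widecheck{\mu_{\mathbf{M}}(H)}$ exactly, with no permutation introduced. Since the reduced sequence produces the same quiver, its associated permutation is also the identity.

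The only mildly subtle point is verifying the identity permutation claim rigorously; every other step is essentially a quotation of Example~\ref{eg:source seq} and Theorem~\ref{thm:muller redd}. I do not anticipate a genuine obstacle — the corollary is exactly the $\sigma = \mathrm{id}$ specialization of Theorem~\ref{thm:muller redd} combined with the observation that RSSs are literal involutions on the framed quiver.
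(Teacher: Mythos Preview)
Your approach is correct and matches the paper's, which offers no proof beyond the bracketed citation: the corollary is exactly the $\sigma=\mathrm{id}$ specialization of Theorem~\ref{thm:muller redd}, after observing via Example~\ref{eg:source seq} that an RSS satisfies $\mu_{\mathbf S}(\widehat H)=\widecheck H$ as labeled quivers.

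That said, your third paragraph contains a genuine error. Applying $\mathbf M^{-1}$ to $\widehat{\mu_{\mathbf M}(H)}$ does \emph{not} recover $\widehat H$: the mutable part becomes $H$, but the arrows to the frozen vertices are governed by the $C$-matrix $C_{\mathbf M^{-1}}$, which is not the identity in general. Framing does not commute with mutation in the naive way you describe, so the step-by-step computation you outline does not go through. Fortunately this verification is unnecessary: the fact that conjugation preserves the associated permutation is part of the content of Theorem~\ref{thm:muller redd} (it is explicit in Muller's proof via $C$-matrices), and the paper uses it freely elsewhere (see Corollary~\ref{cor:key reddening} and Proposition~\ref{prop:reddening-forks}). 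You should simply quote it rather than attempt a direct check.
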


\begin{corollary}[{Theorem~\ref{thm:muller redd}, Corollary~\ref{cor:reddening sequences keys 1}}]
\label{cor:key reddening}
Let $H$ be a key with vertices $k,k'$ and $b_{kk'}=1$.
Let $\mathbf S$ be an RSS for $H$, and $\mathbf M$ be any mutation sequence.
Then the reductions of the sequences $\mathbf M^{-1} \mathbf S \mathbf M$ and $\mathbf M^{-1} \mathbf S, k, k', k ,k', k, \tau(\mathbf M)$ are both reddening sequences of $\mu_{\mathbf M}(H)$, with associated permutations the identity and the transposition ${\tau=(k, k')}$ respectively.
\end{corollary}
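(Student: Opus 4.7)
The first assertion follows directly from Theorem~\ref{thm:muller redd}: by Lemma~\ref{lem:abundant acyclic equality}, the RSS $\mathbf S$ has associated permutation the identity, so $\sigma(\mathbf M) = \mathbf M$ in the conjugation formula, and thus $\mathbf M^{-1} \mathbf S \mathbf M$ (and its reduction) is a reddening sequence of $\mu_{\mathbf M}(H)$ with associated permutation the identity.

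For the second assertion, my plan is to first exhibit $\mathbf S' := \mathbf S, k, k', k, k', k$ as a reddening sequence of $H$ with associated permutation $\tau = (k, k')$. Once this is established, applying Theorem~\ref{thm:muller redd} to $\mathbf S'$ with conjugating sequence $\mathbf M$ yields the sequence $\mathbf M^{-1} \mathbf S' \tau(\mathbf M) = \mathbf M^{-1} \mathbf S, k, k', k, k', k, \tau(\mathbf M)$, which is a reddening sequence of $\mu_{\mathbf M}(H)$ with associated permutation $\tau$; taking the reduction preserves these properties.

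To verify the claim about $\mathbf S'$, first note $\mu_{\mathbf S}(\widehat H) = \widecheck H$ by Lemma~\ref{lem:abundant acyclic equality}, so it remains to show $\mu_{k, k', k, k', k}(\widecheck H) = \tau(\widecheck H)$. By the key property, $b_{vk} = b_{vk'}$ for every $v \in H \setminus \{k, k'\}$, so $\tau(\widecheck H)$ differs from $\widecheck H$ only in the reversed mutable arrow between $k$ and $k'$ and in the exchange of the two frozen arrows incident to $k$ and $k'$. I would verify the equality by tracking the $B$- and $C$-matrices through the five mutations: the key symmetry forces the cumulative arrow modifications between $H \setminus \{k, k'\}$ and $\{k, k'\}$ to cancel across the five steps, while the mutable arrow between $k$ and $k'$---which cannot be affected by length-$2$ paths, since mutations at $k$ or $k'$ only add arrows between vertices distinct from the mutation vertex---is flipped exactly five times.

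A cleaner alternative is to combine the direct calculation with Corollary~\ref{cor:reddening sequences keys 1}: once $\mathbf S'$ is verified to be a reddening sequence (i.e., all mutable vertices end up red in $\mu_{\mathbf S'}(\widehat H)$), the corollary forces its associated permutation to be either the identity or $\tau$, and the reversed $k$-$k'$ arrow rules out the identity. The main obstacle is the bookkeeping in the direct $C$-matrix calculation; verifying that the cumulative effect on arrows incident to $H \setminus \{k, k'\}$ is precisely zero requires careful use of the sign patterns dictated by the key symmetry in the Fomin--Zelevinsky mutation formula.
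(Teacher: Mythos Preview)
Your overall strategy---show that $\mathbf S' := \mathbf S, k, k', k, k', k$ is a reddening sequence for $H$ with associated permutation $\tau$, then conjugate via Theorem~\ref{thm:muller redd}---is sound and matches the paper's intended argument. However, there is a genuine error in your justification.

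You write that ``by the key property, $b_{vk} = b_{vk'}$ for every $v \in H \setminus \{k, k'\}$.'' This is false: Definition~\ref{def:key} only requires that the \emph{direction} of the arrow $v{-}k$ agrees with that of $v{-}k'$; the multiplicities may differ. (Indeed, in Example~\ref{eg:keyRed} the key $K'$ has $b_{13}=5 \neq 4 = b_{23}$ with $k,k'=1,2$.) Consequently your description of $\tau(\widecheck H)$ is wrong: the arrows between $v$ and $\{k,k'\}$ are genuinely swapped under~$\tau$, not left fixed. Your plan to show that ``the cumulative arrow modifications between $H \setminus \{k,k'\}$ and $\{k,k'\}$ cancel'' therefore aims at the wrong target---the correct outcome of the five mutations is a \emph{swap} of these arrows, not a return to the original values.

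The right mechanism is the $A_2$ pentagon relation: for \emph{any} quiver $Q$ with $|b_{kk'}|=1$, one has $\mu_{k,k',k,k',k}(Q)=\tau(Q)$. This holds regardless of the multiplicities of arrows from other vertices to $k,k'$ and is what underlies Lemma~\ref{lem:reddening sequences rank 2} (cf.\ the length-five periodicity discussed there). Applying it to $\widecheck H$ gives $\mu_{k,k',k,k',k}(\widecheck H)=\tau(\widecheck H)$ directly; in particular all mutable vertices are red and the permutation is~$\tau$, so your ``cleaner alternative'' via Corollary~\ref{cor:reddening sequences keys 1} then goes through.

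A minor point: your appeal to Lemma~\ref{lem:abundant acyclic equality} for the first assertion is misplaced, since a key with $b_{kk'}=1$ is not abundant. The fact that an RSS has associated permutation the identity follows instead from Example~\ref{eg:source seq}.
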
 

We note that the sequence $\mathbf M'$ in Example~\ref{eg:keyRed} is the reduction of $$3,2,1,2,3,\underline{1,2,1,2,1},1,3,$$ where the underlined vertices are $k, k', k, k', k$.

\begin{example}[{\cite[Section~5]{MR4183151}}]
\label{eg:half finite}
Consider the $12$ vertex quiver $Q$ shown in Figure~\ref{fig:half finite recurrent etc}.
Let $\mathbf S_\bullet = 1, 3, 5, 7, 9, 11$ and $\mathbf S_\circ = 2, 4, 6, 8, 10, 12$. 
Then $\mathbf S_\circ \mathbf S_\bullet \mathbf S_\circ \mathbf S_\bullet$ is a reddening sequence for $Q$ with associated permutation 
${\sigma=(1,3) (4,6) (7,9) (10,12)}$.

Note that the mutations at vertices in $\mathbf S_\circ$ (resp. $\mathbf S_\bullet$) all commute, so they may be rearranged and still give a reddening sequence.
\end{example}

\begin{figure}[ht]
\centering
\includegraphics[width=9cm]{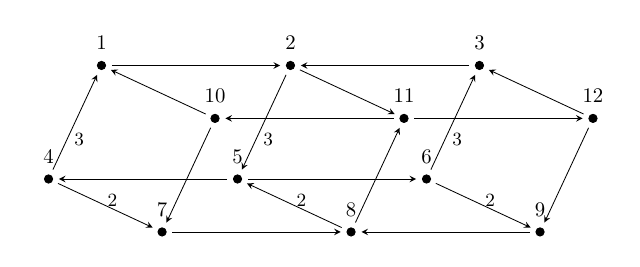}
\caption{
A half-finite bipartite recurrent quiver on $12$ vertices.}
\label{fig:half finite recurrent etc}
\end{figure}

\begin{remark}
\label{rem:half finite notation} 
In the notation of \cite{MR4183151}, the quiver $Q$ discussed in Example~\ref{eg:half finite} would be called \emph{bipartite}, because it is bipartite as an undirected graph.
Color all of the odd indexed vertices~$\bullet$ and the even indexed vertices~$\circ$.
The (non-induced) subgraph which has only the arrows $\bullet \rightarrow \circ$ is a disjoint union of copies of orientations of the Dynkin diagram $A_3$.
Thus this quiver would also be called 
\emph{half-finite}.
(We note that we mutate at $\mathbf S_\circ$~and~$\mathbf S_\bullet$ a total of $4$ times, and the \emph{Coxeter number} of $A_3$ is $4$.)
Further, $Q$ is \emph{recurrent}, meaning that both $\mu_{\mathbf S_\circ}(Q), \mu_{\mathbf S_\bullet}(Q)$ are the \emph{opposite} quiver of $Q$ (that is, $Q$ with all arrows reversed).
\end{remark}

\begin{theorem}[{\cite[Theorem~5.3]{MR4183151}}]
\label{thm:half finite red}
Let $Q$ be a \emph{half-finite bipartite recurrent} quiver. 
Then $Q$ has a maximal green sequence (and thus a reddening sequence). 
\end{theorem}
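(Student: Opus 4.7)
The plan is to construct an explicit maximal green sequence for $Q$ by iterating the bipartite double mutation $\mathbf S_\circ \mathbf S_\bullet$. Since $Q$ is bipartite, the vertices in $\mathbf S_\bullet$ (respectively $\mathbf S_\circ$) are pairwise non-adjacent, so the mutations inside each block commute and can be performed in any order. The claim is that $(\mathbf S_\circ \mathbf S_\bullet)^{h/2}$ (or $(\mathbf S_\circ \mathbf S_\bullet)^h$ for odd Coxeter number) is a maximal green sequence for $Q$, where $h$ is the Coxeter number of the finite-type Dynkin diagram underlying the $\bullet \to \circ$ subgraph.

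First I would verify the base case: starting from $\widehat Q$, every mutable vertex is green, so each individual mutation inside $\mathbf S_\bullet$ is green (independence in the bipartition means a bullet mutation does not affect whether another bullet vertex is green). After $\mathbf S_\bullet$, every bullet vertex is red, and the recurrence hypothesis $\mu_{\mathbf S_\bullet}(Q)=Q^{\mathrm{op}}$ controls the mutable part explicitly. Next I would show that in $\mu_{\mathbf S_\bullet}(\widehat Q)$ every circle vertex is green, so $\mathbf S_\circ$ is again a green block; this is where Theorem~\ref{thm:extensions linear comb of framing} (applied in the bipartite decomposition) is used to track the $C$-vectors through the simultaneous mutation.

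The heart of the proof is then a periodicity statement for the iteration $\mathbf S_\circ \mathbf S_\bullet$. Restricted to each ADE component of the $\bullet \to \circ$ subquiver, the action of one round of $\mathbf S_\circ \mathbf S_\bullet$ on the $C$-vectors is conjugate to the Coxeter element on the root lattice of that Dynkin type (this is the Fomin–Zelevinsky / Zamolodchikov Y-system picture; the half-finite hypothesis is exactly what ensures one gets a finite-type Coxeter element rather than an infinite-order Weyl group element). Consequently the $C$-vectors cycle through all positive roots before becoming all negative, at which point every mutable vertex is simultaneously red and we have reached $\widecheck Q$ up to isomorphism. An inductive argument along this orbit — showing that at every intermediate stage the next block $\mathbf S_\bullet$ or $\mathbf S_\circ$ consists entirely of currently green vertices — completes the proof, and termination in finitely many rounds follows from the finiteness of the Coxeter order $h$.

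The main obstacle is the bookkeeping of $C$-vector evolution under alternating bipartite mutations, in particular verifying greenness at every individual mutation rather than merely at the ends of each $\mathbf S_\bullet$ or $\mathbf S_\circ$ block. The cleanest way I see to handle this is to exploit the fact that inside one block the mutations commute and so can be regrouped so that only sink/source mutations are ever performed (Definition~\ref{def:acyclic} and the remark following it), reducing the verification to an analysis of the transpose action on the frozen block of the extended exchange matrix. The rest of the argument is then a direct application of the known periodicity of $Y$-systems attached to a Dynkin diagram, which supplies both the termination after $h/2$ rounds and the correctness of the associated permutation described in the example following Figure~\ref{fig:half finite recurrent etc}.
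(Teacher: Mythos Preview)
This theorem is not proved in the present paper at all; it is quoted as \cite[Theorem~5.3]{MR4183151} and the only additional content here is the one-sentence remark that the construction is explicit, alternating $\mathbf S_\circ$ and $\mathbf S_\bullet$ with the number of repetitions determined by the Coxeter number of the ADE pieces of the $\bullet\to\circ$ subgraph. Your outline matches that description, so there is nothing in this paper to compare your argument against --- for the actual proof you must consult the cited source.

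One substantive caution about your sketch: you assert that ``restricted to each ADE component of the $\bullet\to\circ$ subquiver, the action of one round of $\mathbf S_\circ\mathbf S_\bullet$ on the $C$-vectors is conjugate to the Coxeter element on the root lattice of that Dynkin type.'' In a half-finite recurrent quiver the $\circ\to\bullet$ half may carry arbitrary multiplicities (see the weights $2$ and $3$ in Example~\ref{eg:half finite}), so the $C$-vectors do not sit inside any finite root lattice and the bipartite double mutation is not literally a Coxeter element on such a lattice. What one actually needs is that the \emph{sign pattern} of the $C$-vectors (equivalently, the greenness/redness of each vertex) is governed by the ADE half alone, even though the magnitudes are not; this is the nontrivial content of the Galashin--Pylyavskyy argument and is not covered by the Fomin--Zelevinsky $Y$-system periodicity as stated. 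Your proposal is on the right track structurally, but this reduction step is where the real work lies and your sketch does not yet supply it.
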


Theorem~\ref{thm:half finite red} is constructive; the maximal green sequence always consists of alternating the sequences $S_\circ$ and $S_\bullet$, with the number of repetitions determined by the Coxeter number of particular subgraphs of $Q$.


\begin{example}[{\cite[Examples~3.6,4.8]{MR4183151}}]
\label{fig:dreaded torus}
    The Dreaded Torus, given in Figure \ref{fig:dreaded-torus-sec-3}, has a maximal green sequence $1,3,4,2,1,3$ \cite{MR4183151}. A brief computation shows that the associated permutation is $(1,4) (2,3)$.
    The same paper shows that quivers of the form given in Figure \ref{fig:dreaded-torus-dominated} all have the same maximal green sequence.
    \begin{figure}[ht]
        \centering
        \[\begin{tikzcd}[every arrow/.append style = {-{Stealth}}]
        & 4 \\
        & 3 \\
        1 & & 2 
        \arrow[from=2-2, to=1-2, "2a", pos=0.3]
        \arrow[from=3-1, to=2-2, "a"]
        \arrow[from=3-3, to=2-2]
        \arrow[from=1-2, to=3-1 ]
        \arrow[from=3-1, to=3-3, "a"]
        \arrow[from=1-2, to=3-3, "a"]
        \end{tikzcd}\]
        \caption{Dominated Dreaded Torus (for $a > 0$).}
        \label{fig:dreaded-torus-dominated}
    \end{figure}
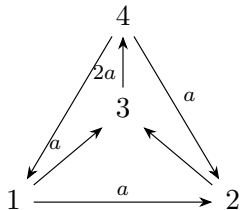
\end{example}

\begin{example}[{\cite[Section 5]{MR3695872}}]
\label{eg:punctured spheres}
Fix an integer $k \geq 4$. 
Let $T_k$ be a quiver with $3(k-2)$ vertices, labeled
$v_1, \ldots, v_{k-3}, u_1, \ldots, u_{k-3}, w_{1}, \ldots, w_{k-4}, s, t, \overline s, \overline t$
and arrows 
$$s \rightarrow v_1 \rightarrow v_2 \rightarrow \cdots \rightarrow v_{k-3} \rightarrow t \rightarrow u_{k-3} \rightarrow u_{k-3} \rightarrow \cdots \rightarrow u_{1} \rightarrow s, $$
$$v_1 \rightarrow \overline s \rightarrow u_1, u_{k-3} \rightarrow \overline t \rightarrow v_{k-3},$$
$$ v_{i+1} \rightarrow w_i \rightarrow v_i,$$
$$ u_{i} \rightarrow w_i \rightarrow u_{i+1}.$$ 
We define the mutation sequences
$$\mathbf M_{ind}' = w_1, w_2, \ldots, w_{k-4}, \overline s, \overline t,$$
$$\mathbf M_{cycles} = u_1, v_1, u_2, v_2, \ldots, u_{k-3}, v_{k-3},$$
$$\mathbf M_{ind} = w_1, w_2, \ldots, w_{k-4}, s, t,$$
$$\mathbf M_{X} = v_1, v_2, \ldots, v_{k-3}, \overline t, u_{k-3}, u_{k-4}, \ldots, u_1, \overline s, u_2, u_3, \ldots, u_{k-3}, \overline t, v_{k-3}, v_{k-4}, \ldots, v_1,$$
and their concatenation $\mathbf M = \mathbf M_{ind}' \mathbf M_{cycles} \mathbf M_{ind} \mathbf M_{X}$.
Then $\mathbf M$ is a maximal green sequence of $T_k$ (and, in particular, a reddening sequence) with associated permutation~$( u_1, v_1, \overline s,  s) (t, \overline t)  \prod_{i=2}^{k-3}(v_i, u_i)$.
\end{example}

\begin{remark}
\label{rem:punctured spheres}
The quivers we describe in Example~\ref{eg:punctured spheres} are associated to an~$n \geq 4$ punctured sphere with a specific triangulation. To construct the reddening sequence, we follow the construction of \textcite{MR3695872}. 
Choose a particular puncture~$X$, and denote the other punctures $P_1, \ldots, P_{n-1}$. 
Add compatible edges $P_i - P_{i \pm 1}$ and $P_{n-1} - X - P_{1}$, this divides the sphere into two `hemispheres', each an $n$-gon. 
Now add {two} arcs $X - P_i$ for each $2 \leq i \leq n-2$, one copy in each hemisphere. 
See Figure~\ref{fig:puncturedSphereTriangulation} for the construction with $n=5$ and Example~\ref{eg:punctured sphere 5} for the associated quiver.
\begin{figure}[ht]
\centering
\includegraphics[width=6cm]{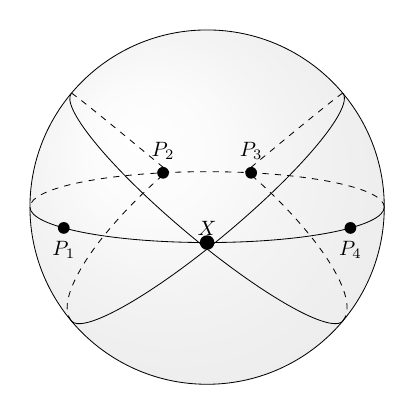}
\caption{A triangulation of a $5$ punctured sphere.}
\label{fig:puncturedSphereTriangulation}
\end{figure}
In the notation of \textcite{MR3695872}, we use the point $X$ as Mills does, thus the sets $\mathcal{M} = \mathcal{M}_0 = \{P_i\},$ and $\mathcal{S} = \varnothing$.
Our mutation sequences $\mathbf M'_{ind}, \mathbf M_{cycles}, \mathbf M_{ind}, \mathbf M_X$ correspond to $\mathscr{M}'_{ind}, \mathscr{M}_{cycles}, \mathscr{M}_{ind}, \mathscr{M}_X$ from \cite[Sections 5.1, 5.2, 5.3, 5.5]{MR3695872} respectively.

A construction similar to Example~\ref{eg:punctured spheres} can be given for other surface quivers. 
A complete description of these quivers and reddening sequences is available in \textcite{MR3695872}.
We do not know a simple or uniform description of their associated permutations.
\end{remark}


\begin{example}
\label{eg:punctured sphere 5}

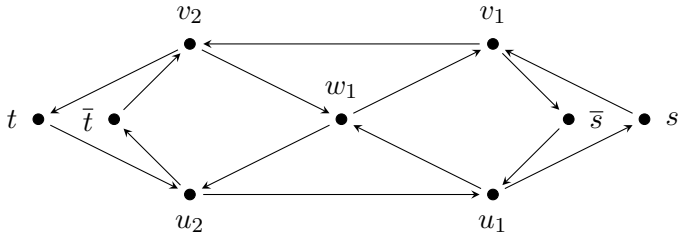
\begin{figure}[ht]
    \centering
        
    \newcommand{\vertsize}{2pt}
    \begin{tikzpicture}
    \draw[fill] (4,0) circle (\vertsize) node[right=4pt]{$s$} coordinate (s);
    \draw[fill] (-4,0) circle (\vertsize) node[left=4pt]{$t$} coordinate (t);
    \draw[fill] (3,0) circle (\vertsize) node[right=4pt]{$\overline s$} coordinate (sp);
    \draw[fill] (-3,0) circle (\vertsize) node[left=4pt]{$\overline t$} coordinate (tp);

    \draw[fill] (-2,1) circle (\vertsize) node[above=4pt]{$v_2$} coordinate (v2);
    \draw[fill] (2,1) circle (\vertsize) node[above=4pt]{$v_1$} coordinate (v1);
    
    \draw[fill] (-2,-1) circle (\vertsize) node[below=4pt]{$u_2$} coordinate (u2);
    \draw[fill] (2,-1) circle (\vertsize) node[below=4pt]{$u_1$} coordinate (u1);
    
    \draw[fill] (0,0) circle (\vertsize) node[above=4pt]{$w_1$} coordinate (w1);
    
    \draw[-stealth, shorten >=5pt, shorten <= 5pt] (s) -- (v1);
    \draw[-stealth, shorten >=5pt, shorten <= 5pt] (v1) -- (v2);
    \draw[-stealth, shorten >=5pt, shorten <= 5pt] (v2) -- (t);
    \draw[-stealth, shorten >=5pt, shorten <= 5pt] (t) -- (u2);
    \draw[-stealth, shorten >=5pt, shorten <= 5pt] (u2) -- (u1);
    \draw[-stealth, shorten >=5pt, shorten <= 5pt] (u1) -- (s);

    \draw[-stealth, shorten >=5pt, shorten <= 5pt] (v1) -- (sp);
    \draw[-stealth, shorten >=5pt, shorten <= 5pt] (sp) -- (u1);
    
    \draw[-stealth, shorten >=5pt, shorten <= 5pt] (u2) -- (tp);
    \draw[-stealth, shorten >=5pt, shorten <= 5pt] (tp) -- (v2);

    \draw[-stealth, shorten >=5pt, shorten <= 5pt] (v2) -- (w1);
    \draw[-stealth, shorten >=5pt, shorten <= 5pt] (w1) -- (v1);
    \draw[-stealth, shorten >=5pt, shorten <= 5pt] (u1) -- (w1);
    \draw[-stealth, shorten >=5pt, shorten <= 5pt] (w1) -- (u2);
    \end{tikzpicture}
    \caption{The quiver $T_5$.}
    \label{fig:punctured sphere}
\end{figure}

One reddening sequence of $T_5$, as constructed in Example~\ref{eg:punctured spheres} and pictured in Figure~\ref{fig:punctured sphere}, is $$\mathbf S = w_1, \overline s, \overline t,u_1, v_1, u_2, v_2, w_1, s, t, v_1, v_2, \overline t, u_2, u_1, \overline s, u_2, \overline t, v_2, v_1,$$ with associated permutation $\sigma = (u_1, v_1, \overline s, s) (t, \overline t)(u_2, v_2).$

\end{example}

\begin{theorem}[{\cite[Theorem~1.1]{MR3695872}}]
Suppose that $\Sigma$ is a \emph{marked surface} which is not once-punctured and closed. 
Then the quiver associated to any triangulation of $\Sigma$ has a maximal green sequence (and thus a reddening sequence).
\end{theorem}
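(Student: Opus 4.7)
The plan is to generalize the explicit block construction of Example~\ref{eg:punctured spheres} from punctured spheres to arbitrary marked surfaces satisfying the hypothesis. Since the existence of a reddening sequence is a mutation-class invariant, and since Theorem~\ref{thm:extension red seq} allows me to concatenate reddening sequences across disjoint unions, it suffices to fix one connected $\Sigma$ and one convenient triangulation. The assumption that $\Sigma$ is not once-punctured closed guarantees the existence of a marked point $X$ that is either on the boundary or is one of at least two punctures; I would work with a triangulation that fans out from $X$, adding the standard two-arc configuration near each puncture $P \neq X$ to emulate the $v_i, u_i, w_i, s, t, \overline s, \overline t$ layout of Example~\ref{eg:punctured spheres}.

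For such a fan triangulation I would construct the maximal green sequence in blocks modeled on $\mathbf M_{ind}' \mathbf M_{cycles} \mathbf M_{ind} \mathbf M_X$. Each block processes a different kind of arc: the ``inner'' arcs between non-$X$ marked points, the small cycles at each puncture distinct from $X$, and finally the spokes emanating from $X$ itself. Proceeding around $X$ one sector at a time reduces the general surface to a union of punctured polygons, each of which contributes its own essentially independent block; an induction on the number of punctures (or on the Euler characteristic) would combine them into a single sequence. The associated permutation can then be read off sector by sector, generalizing the formula $\sigma = (u_1, v_1, \overline s, s)(t, \overline t)\prod_{i=2}^{k-3}(v_i, u_i)$ obtained in the punctured sphere case.

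The main obstacle is verifying that each mutation is performed at a green vertex. This is a tracking problem for sign-coherent $c$-vectors across the whole sequence, which I would handle using Theorem~\ref{thm:extensions linear comb of framing} together with sign coherence (Theorem~\ref{thm:all red or green}), and by maintaining a combinatorial invariant recording which arcs have been ``flipped'' rather than writing out full $C$-matrices. The once-punctured closed case has to be excluded because every triangulation there forces every arc to meet the unique puncture, leaving no ``peripheral'' arc at which to start the sweep; verifying this obstruction is itself a useful consistency check on the construction.
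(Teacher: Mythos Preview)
The paper does not prove this theorem; it quotes it verbatim from \cite{MR3695872} as background and uses it only as a source of examples (Remarks~\ref{rem:punctured spheres} and the surrounding discussion).  So there is no ``paper's own proof'' to compare against.

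Regarding your sketch itself, two points deserve attention.  First, your reduction step is too strong for the statement as written.  The existence of a \emph{reddening} sequence is indeed a mutation-class invariant (Theorem~\ref{thm:muller redd}), but the existence of a \emph{maximal green} sequence is not: there are quivers admitting a maximal green sequence which are mutation equivalent to quivers admitting none.  The theorem asserts a maximal green sequence for \emph{every} triangulation, so choosing one convenient triangulation only delivers the parenthetical ``and thus a reddening sequence'' conclusion, not the main clause.  Mills in fact works with an arbitrary triangulation and builds the sequence from the local blocks of that particular triangulation; the fan-from-$X$ picture is only the simplest instance.

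Second, what you flag as ``the main obstacle'' is essentially the whole proof.  Tracking greenness through the blocks $\mathscr M'_{ind}, \mathscr M_{cycles}, \mathscr M_{ind}, \mathscr M_X$ and their interactions with the self-folded triangles at each puncture occupies the bulk of \cite{MR3695872}; neither Theorem~\ref{thm:extensions linear comb of framing} nor sign coherence alone gives this, because consecutive blocks are not triangular extensions of one another.  Finally, your proposed reason for excluding the once-punctured closed case is not the actual obstruction: those quivers genuinely have \emph{no} maximal green sequence (a result of Ladkani), not merely an inconvenient triangulation.
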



\begin{example}
\label{eg:banff but tricky}
    \begin{figure}[ht]
        \centering
        Q =\begin{tikzcd}[every arrow/.append style = {-{Stealth}}]
        & 1 \\
        & 2 \\
        3 & & 4 \\
        & 5\\
        & 6
        \arrow[from=1-2, to=2-2, "2"]
        \arrow[from=2-2, to=3-1]
        \arrow[from=2-2, to=3-3]
        \arrow[from=3-1, to=1-2]
        \arrow[from=3-1, to=3-3]
        \arrow[from=3-3, to=1-2]
        \arrow[from=3-3, to=4-2]
        \arrow[from=4-2, to=3-1]
        \arrow[from=5-2, to=4-2]
        \end{tikzcd}
        \caption{A Banff quiver on 6 vertices.}
        \label{fig:banff example}
    \end{figure}
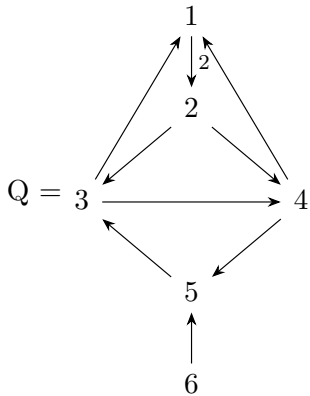
    All Banff quivers are known to admit a reddening sequence \cite{MR4108215}.
    The quiver $Q$ given in Figure \ref{fig:banff example} is known to be Banff \cite{muller_locally_2013, ervin_answering_2024}.
    After mutating at the sequence 
    $$\mathbf{M} = 2,5,4,1,4,2,1,6,5,4,5,3,$$
    we arrive at a quiver with vertex 4 a source (which one can use for the Banff algorithm).
    This quiver has a reddening sequence given by
    $$\mathbf{S} = 4,1,3,2,3,6,1,5,3,1.$$
    Then $\mathbf{N} = \mathbf{MSM}^{-1}$ is a reddening sequence for our quiver with associated permutation the identity.
\end{example}

\begin{example}
\label{eg:grassmannian reddening}
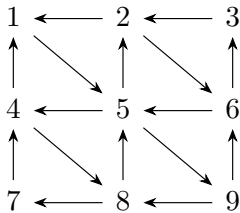
\begin{figure}[ht]
    \centering
    \begin{tikzcd}[every arrow/.append style = {-{Stealth}}]
    1 & 2 & 3 \\
    4 & 5 & 6 \\
    7 & 8 & 9 \\
    \arrow[from=1-1, to=2-2]
    \arrow[from=1-2, to=2-3]
    \arrow[from=1-2, to=1-1]
    \arrow[from=1-3, to=1-2]
    \arrow[from=2-1, to=3-2]
    \arrow[from=2-1, to=1-1]
    \arrow[from=2-2, to=1-2]
    \arrow[from=2-2, to=2-1]
    \arrow[from=2-2, to=3-3]
    \arrow[from=2-3, to=2-2]
    \arrow[from=2-3, to=1-3]
    \arrow[from=3-1, to=2-1]
    \arrow[from=3-2, to=2-2]
    \arrow[from=3-2, to=3-1]
    \arrow[from=3-3, to=2-3]
    \arrow[from=3-3, to=3-2]
    \end{tikzcd}
    \caption{The triangulated grid quiver $R_{3,3}$.}
    \label{fig:triang grid 3 3}
\end{figure}
Consider the triangulated grid quiver $R_{3,3}$ shown in Figure \ref{fig:triang grid 3 3}.
This quiver has a reddening sequence $\mathbf S = 7, 4, 1, 8, 7, 5, 4, 2, 1, 9, 8, 7, 6, 5, 4, 3, 2, 1$ \cite{MR3452273} (see Remark~\ref{rem:positroid green seqs}) with associated permutation $\sigma = (1,3) (4,6) (7,9)$.
\end{example}

\begin{example}
\label{eg:positroid reddening}
\begin{figure}[ht]
    \centering
    \begin{tikzcd}[every arrow/.append style = {-{Stealth}}]
     & 2 & 3 \\
    4 & 1 & 6 \\
    7 & 8 & 5 \\
    \arrow[from=1-2, to=2-3]
    \arrow[from=1-3, to=1-2]
    \arrow[from=2-1, to=3-2]
    \arrow[from=2-2, to=1-2]
    \arrow[from=2-2, to=2-1]
    \arrow[from=2-2, to=3-3]
    \arrow[from=2-3, to=2-2]
    \arrow[from=2-3, to=1-3]
    \arrow[from=3-1, to=2-1]
    \arrow[from=3-2, to=2-2]
    \arrow[from=3-2, to=3-1]
    \arrow[from=3-3, to=2-3]
    \arrow[from=3-3, to=3-2]
    \end{tikzcd}
    \caption{The quiver $R'$.}
    \label{fig:positroid reddening}
\end{figure}
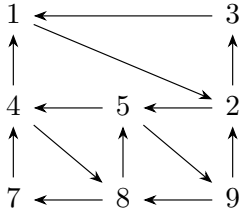
Consider the quiver $R'$ shown in Figure~\ref{fig:positroid reddening}.
This quiver has a reddening sequence 
$$\mathbf S' = 5, 1, 7, 4, 1, 8, 7, 5, 4, 2, 1, 6, 5, 4, 3, 2, 1, 3, 5,$$ 
with associated permutation $(1,3) (4,6) (7,8)$.
\end{example}

\begin{example}
\label{eg:positroid reddening 2}
\begin{figure}[ht]
    \centering
    \begin{tikzcd}[every arrow/.append style = {-{Stealth}}]
    1 &  & 3 \\
    4 & 5 & 2 \\
    7 & 8 & 9 \\
    \arrow[from=1-1, to=2-3]
    \arrow[from=1-3, to=1-1]
    \arrow[from=2-1, to=3-2]
    \arrow[from=2-1, to=1-1]
    \arrow[from=2-2, to=2-1]
    \arrow[from=2-2, to=3-3]
    \arrow[from=2-3, to=2-2]
    \arrow[from=2-3, to=1-3]
    \arrow[from=3-1, to=2-1]
    \arrow[from=3-2, to=2-2]
    \arrow[from=3-2, to=3-1]
    \arrow[from=3-3, to=2-3]
    \arrow[from=3-3, to=3-2]
    \end{tikzcd}
    \caption{The quiver $R''$.}
    \label{fig:positroid reddening 2}
\end{figure}
Consider the quiver $R''$ shown in Figure~\ref{fig:positroid reddening 2}.
This quiver has a reddening sequence 
$$\mathbf S'' = 7, 4, 1, 8, 7, 5, 4, 1, 9, 8, 7, 2, 5, 4, 3, 1, 7, 8, 5, 3, 1, 7,$$ 
with associated permutation $ (2,5) (3,8) (4,9,7)$.
\end{example}

\begin{remark} [{\cite{MR3820364}}]
\label{rem:positroid green seqs}
All of the quivers $R_{3,3}, R', R''$ are associated to \emph{reduced plabic graphs}.
Both $\mu_{5,1}(R')$ and $\mu_{2,6}(R'')$ are subquivers of $R_{3,3}$ (by deleting $9$ and $6$ respectively).
It is a theorem that every subquiver of a quiver with a reddening sequence also has a reddening sequence \cite[Theorem~17]{MR3512669}. 
Thus, just from the existence of the reddening sequence $\mathbf S$ of $R$, the quivers $R'$ and $R''$ must have \emph{some} reddening sequence. 
However, it is not clear how to recover reddening sequences for a subquiver from a reddening sequence of the larger quiver (see 
\cite[Remark~35]{MR3512669}).
We found $\mathbf S'$ and $\mathbf S''$ by tedious guesswork.

More generally, let $R_{k\ell}$ be a triangulated grid quiver with side lengths $k$ and $\ell$ respectively (the case $k=\ell=3$ is Example~\ref{eg:grassmannian reddening}).
Then $R_{k\ell}$ has a reddening sequence (as we will describe in Theorem~\ref{thm:grid reddening}).
In \cite[Theorem~4.1]{MR3820364} it was shown that every quiver associated to a reduced plabic graph is mutation equivalent to a subquiver of some $R_{k\ell}$.
Thus by \cite[Theorem~17]{MR3512669}, every quiver associated to a reduced plabic graph has a reddening sequence.
However, there is no systematic description of the reddening sequences nor their associated permutations for all reduced plabic graphs. 
\end{remark}

\begin{theorem}[{\cite[Proposition~11.16]{MR3452273}}]
\label{thm:grid reddening}
Let $R_{k, \ell}$ be the triangulated grid quiver with $k$ rows and $\ell$ columns.
Then $R_{k, \ell}$ has a reddening sequence of length $\binom {\ell+1} 2 k$, constructed as follows:
set $i=1$;
mutate the leftmost $i$ vertices of each row, starting from the bottom row and mutating each row from right-to-left;
increment $i$ and repeat the previous step.
\end{theorem}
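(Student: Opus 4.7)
The plan is to prove this by induction on $\ell$. At each stage the goal is to verify that every mutation in $\mathbf{S} = \mathbf{S}_1 \cdots \mathbf{S}_\ell$ (where $\mathbf{S}_i$ denotes the $i$-th block) is at a green vertex; this would make $\mathbf{S}$ a maximal green sequence, and in particular a reddening sequence. In light of Lemma~\ref{lem:abundant acyclic equality} and the general structure of such sequences, we would also obtain that the associated permutation is the identity on each column.

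For the base case $\ell = 1$, the quiver $R_{k,1}$ is the linear quiver of type $A_k$ with all arrows pointing upward (from row $k$ to row $1$). The bottom vertex is the unique source, and mutating at a source creates a new source at the next-lowest vertex. Hence the prescribed sequence (mutating from the bottom up) is a source sequence, and by Example~\ref{eg:source seq} it is an RSS for $R_{k,1}$.

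For the inductive step, I would try to exhibit a correspondence between one of the blocks (say the innermost block $\mathbf{S}_1$ or the outermost block $\mathbf{S}_\ell$) and the reddening process for a smaller grid. For instance, after completing $\mathbf{S}_1$, the first column of $R_{k,\ell}$ should become ``half-reddened'' in a controlled way, and the residual quiver on columns $2, \ldots, \ell$ should, up to the arrows inherited from the now-reddened column, play the role of $R_{k,\ell-1}$. The conjugated version of the remaining blocks would then be the inductive reddening sequence provided by the hypothesis. Alternatively, peeling off $\mathbf{S}_\ell$ first and reducing to $R_{k, \ell-1}$ on the remaining columns is a symmetric option.

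The main obstacle is that the triangulated grid does not decompose as a clean triangular extension of smaller grids: arrows run in both directions between adjacent columns (vertical-up and horizontal-left versus diagonal), so the quiver $\mu_{\mathbf{S}_1 \cdots \mathbf{S}_i}(\widehat R_{k,\ell})$ must be described carefully at each stage in order to verify greenness of the next mutation. Concretely, one would maintain a combinatorial invariant describing the shape of this quiver after each block---guessed from hand computations with small cases such as $R_{3,3}$ and then verified by induction on $i$. The alternative, more conceptual proof along the lines of \cite{MR3452273} sidesteps these computations by identifying $\mathbf{S}$ with a path in the cluster scattering diagram of the Grassmannian cluster algebra; the prescribed order of mutations corresponds to crossing each wall exactly once in a controlled manner, which furnishes the reddening property a priori. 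Either route must ultimately confront the delicate arrow bookkeeping, but the scattering-diagram viewpoint packages it into a single geometric statement.
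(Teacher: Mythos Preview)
The paper does not prove this theorem; it merely cites \cite[Proposition~11.16]{MR3452273} and records the statement for later use. So there is no proof in the paper to compare against.

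Your proposal, in any case, is not a proof but a plan for one, and the plan has gaps and errors. First, the appeal to Lemma~\ref{lem:abundant acyclic equality} is misplaced: that lemma concerns \emph{abundant} acyclic quivers, and $R_{k,\ell}$ has only single arrows. Second, your claim about the associated permutation is wrong. From Example~\ref{eg:grassmannian reddening} the permutation for $R_{3,3}$ is $(1,3)(4,6)(7,9)$, i.e.\ it reverses each \emph{row} (equivalently, swaps the outermost columns); it is certainly not the identity on columns. Third, and most importantly, you yourself identify the obstacle---the grid is not a triangular extension of smaller grids, so the interaction between columns has to be tracked explicitly---and then do not resolve it. Saying ``one would maintain a combinatorial invariant \ldots\ guessed from hand computations'' is a description of the work, not the work. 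The reference you gesture at does carry this out (not via scattering diagrams, incidentally, but by a direct combinatorial/representation-theoretic argument), and the remark following the theorem in the present paper only addresses a convention mismatch.
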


\begin{remark}
    The mutation sequence described in Theorem~\ref{thm:grid reddening} is \emph{reversed} from the reddening sequence described in \cite[Section~11]{MR3452273}. 
    This is because we have different conventions for (co)framed extensions.
\end{remark}

\begin{theorem}[{\cite[Theorem~1.2]{MR3820364}}]
\label{thm:reduced plabics redd}
Every quiver associated to a reduced plabic graph has a reddening sequence.
\end{theorem}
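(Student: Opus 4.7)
The plan is to chain together the three results highlighted in Remark~\ref{rem:positroid green seqs}: the structural theorem of \cite[Theorem~4.1]{MR3820364} reducing reduced plabic quivers to subquivers of triangulated grids, the explicit reddening sequence for grids in Theorem~\ref{thm:grid reddening}, and the inheritance of reddening sequences by subquivers from \cite[Theorem~17]{MR3512669}. One also needs Theorem~\ref{thm:muller redd} to show that having a reddening sequence is a mutation-class invariant, and so can be checked on any convenient representative.

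First I would take a quiver $Q$ associated to a reduced plabic graph and invoke \cite[Theorem~4.1]{MR3820364} to obtain integers $k,\ell$ and a mutation sequence $\mathbf{M}$ such that $Q' := \mu_{\mathbf{M}}(Q)$ is (isomorphic to) a full subquiver of the triangulated grid quiver $R_{k,\ell}$. Next, by Theorem~\ref{thm:grid reddening}, the grid $R_{k,\ell}$ admits an explicit reddening sequence, so by \cite[Theorem~17]{MR3512669} the subquiver $Q'$ inherits a reddening sequence, say $\mathbf{R}'$ with associated permutation~$\sigma$.

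To transport this back to $Q$, I would apply Theorem~\ref{thm:muller redd} to the quiver $Q'$ with the mutation sequence $\mathbf{M}^{-1}$: since $Q = \mu_{\mathbf{M}^{-1}}(Q')$, the conjugated sequence $\mathbf{M}\,\mathbf{R}'\,\sigma(\mathbf{M}^{-1})$ is a reddening sequence for $Q$, which is what we wanted.

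The main obstacle is the first step, that is, the structural reduction of \cite[Theorem~4.1]{MR3820364}; this is the genuinely combinatorial content, and without it the argument collapses. The remaining steps are bookkeeping given the tools collected in this section. It is worth noting that the reddening sequences produced by this chain of arguments are not at all canonical and tend to be extremely long: the tedious guesswork described in Remark~\ref{rem:positroid green seqs} for the short sequences $\mathbf{S}'$ and $\mathbf{S}''$ reflects exactly this gap between ``existence'' (guaranteed by the above argument) and the discovery of efficient reddening sequences.
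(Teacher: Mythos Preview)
Your proposal is correct and is exactly the argument the paper itself sketches in Remark~\ref{rem:positroid green seqs}: reduce to a subquiver of some $R_{k,\ell}$ via \cite[Theorem~4.1]{MR3820364}, use the explicit reddening sequence of Theorem~\ref{thm:grid reddening}, inherit via \cite[Theorem~17]{MR3512669}, and transport along mutation equivalence with Theorem~\ref{thm:muller redd}. The paper does not supply any further proof beyond this outline, since the theorem is quoted from \cite{MR3820364}.
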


\subsection{New mutation cycles}

Our first new construction expands on that of \cite[Theorem~1.1]{LMC}. Combining Proposition~\ref{prop:extensions with equality give cycles} and Corollary~\ref{cor:acyclicRed} immediately gives the following:

\begin{theorem}
\label{thm:LMCGeneral}
    Let $H,T$ be acyclic quivers with RSSs $\mathbf S_H,\mathbf S_T$. 
    Fix mutation sequences $\mathbf N, \mathbf M$ of the vertices in $H, T$ respectively. 
    Then, for any nonnegative $|H|\times |T|$ matrix $A$, the quiver~${Q = \mu_{\mathbf M}(T) \stackrel{A}\rightarrow \mu_{\mathbf N}(H)}$ satisfies $${Q = \mu_{\mathbf M^{-1}\mathbf S_T \mathbf M \mathbf N^{-1} \mathbf S_H \mathbf N}(Q).}$$
    If $Q$ is connected, then the reduction of $\mathbf M^{-1}\mathbf S_T \mathbf M \mathbf N^{-1} \mathbf S_H \mathbf N$ is a mutation cycle.
\end{theorem}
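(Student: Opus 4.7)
The plan is to assemble this directly from Corollary~\ref{cor:acyclicRed} and Proposition~\ref{prop:extensions with equality give cycles}; essentially no new ideas are required.

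First, I would observe that Corollary~\ref{cor:acyclicRed} applied to the acyclic quiver $T$ with RSS $\mathbf S_T$ and the mutation sequence $\mathbf M$ produces a reddening sequence $\mathbf M_T$ of $\mu_{\mathbf M}(T)$ (namely the reduction of $\mathbf M^{-1}\mathbf S_T \mathbf M$), whose associated permutation is the identity. Similarly, applying the same corollary to $H, \mathbf S_H, \mathbf N$ gives a reddening sequence $\mathbf M_H$ of $\mu_{\mathbf N}(H)$ with identity associated permutation.

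Next I would apply Proposition~\ref{prop:extensions with equality give cycles} to the triangular extension $Q = \mu_{\mathbf M}(T) \stackrel{A}\rightarrow \mu_{\mathbf N}(H)$ using these two reddening sequences. That proposition gives $\mu_{\mathbf M_T \mathbf M_H}(Q) = Q$. Since reducing a mutation sequence just cancels adjacent duplicates $i,i$, and $\mu_i \circ \mu_i = \mathrm{id}$, the unreduced concatenation $\mathbf M^{-1} \mathbf S_T \mathbf M \mathbf N^{-1} \mathbf S_H \mathbf N$ induces the same mutation on $Q$ as its reduction does. Hence
\[
\mu_{\mathbf M^{-1} \mathbf S_T \mathbf M \mathbf N^{-1} \mathbf S_H \mathbf N}(Q) = Q.
\]

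For the final claim, if $Q$ is connected and has at least two vertices then $Q$ has no isolated vertices (and the one-vertex case is vacuous up to triviality). By the last sentence of Definition~\ref{def:mugraph}, any reduced mutation sequence $\mathbf i$ with $\mu_{\mathbf i}(Q)=Q$ is a mutation cycle, so the reduction of $\mathbf M^{-1}\mathbf S_T\mathbf M\mathbf N^{-1}\mathbf S_H\mathbf N$ is a mutation cycle starting at $Q$.

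There is no substantial obstacle: the statement is assembled piece-by-piece from results already in the paper. The only item requiring any care is the passage from the unreduced concatenation (which is what appears in the statement) to a bona fide reduced mutation cycle, which is handled by the standard involution $\mu_i\circ\mu_i = \mathrm{id}$ together with the connectedness hypothesis to rule out isolated vertices.
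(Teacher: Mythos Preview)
Your proposal is correct and mirrors the paper's own argument exactly: the paper simply states that Theorem~\ref{thm:LMCGeneral} follows immediately by combining Proposition~\ref{prop:extensions with equality give cycles} with Corollary~\ref{cor:acyclicRed}. Your added remarks on passing between the reduced and unreduced sequences via $\mu_i\circ\mu_i=\mathrm{id}$, and on invoking Definition~\ref{def:mugraph} for the mutation-cycle claim, are the natural way to spell out those immediate steps.
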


Under mild conditions on the mutation sequences $\mathbf M, \mathbf N$ and quivers $H,T$, we can conclude that this is a simple mutation cycle
(Recall the definition of a distinguishing matrix from Definition~\ref{def:distinguishing}).

\begin{lemma}
\label{lem:PositiveDistinguishingLMC}
Let $T$ be an abundant acyclic quiver 
with RSS $\mathbf S$, and $\mathbf{M}$ a mutation sequence such that $\mathbf M^{-1} \mathbf S \mathbf M$ is reduced.
Then any nonnegative matrix $A \neq 0$ is distinguishing for the mutation sequence $[\mu_{\mathbf M}(T)]_{\mathbf M^{-1} \mathbf S \mathbf M}$.
\end{lemma}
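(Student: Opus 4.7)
The plan is to reduce to a single-column matrix and then analyze $C$-matrices along the conjugated reddening sequence. First I would observe that it suffices to take $A = [a]$ for a single nonzero nonnegative column $a$, since adding further columns to a distinguishing matrix preserves the distinguishing property. By Theorem~\ref{thm:extensions linear comb of framing}, the adjacency from the vertices of $T$ to the new vertex at position $i$ of $[\mu_{\mathbf{M}}(T) \stackrel{a}{\to}\{v\}]_{\mathbf{N}}$ is $C_i a$, where $\mathbf{N} = \mathbf{M}^{-1}\mathbf{S}\mathbf{M}$ and $C_i$ denotes the $C$-matrix of $\mu_{\mathbf{M}}(T)$ after the first $i$ mutations of $\mathbf{N}$. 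Two extended quivers at positions $i<j$ coincide iff the mutable subquivers $R_i, R_j$ agree and $C_i a = C_j a$.

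Next I would classify the pairs $(i, j)$ with $R_i = R_j$. Since $\mathbf{S}$ is the RSS of the abundant acyclic quiver $T$, the intermediate mutable quivers in the middle segment $[T]_{\mathbf{S}}$ are pairwise distinct except for the two endpoints, which both equal $T$ by Lemma~\ref{lem:abundant acyclic equality}. Combined with the structural mirror symmetry of $\mathbf{N}$, this reduces the pairs in question to the mirror pairs $(|\mathbf{M}|-r, |\mathbf{M}|+|\mathbf{S}|+r)$ for $0 \leq r \leq |\mathbf{M}|$; any non-mirror repeat within the outer portions would create a subcycle of the framed mutation graph incompatible with the reducedness hypothesis on $\mathbf{N}$.

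For each mirror pair, I would then show $C_i a \neq C_j a$. At the extreme pair, $C_0 = I$ and $C_{|\mathbf{N}|} = -I$ (since $\mathbf{N}$ is a reddening sequence of $\mu_{\mathbf{M}}(T)$ with identity associated permutation by Lemma~\ref{lem:abundant acyclic equality}), so $(C_0 - C_{|\mathbf{N}|}) a = 2a \neq 0$. For intermediate mirror pairs, I would track the $C$-matrix through the source sequence $\mathbf{S}$ to establish that every vertex changes color between the paired positions. By row sign-coherence (Corollary~\ref{cor:oris follow}), this forces each row of $C_i - C_j$ to be sign-coherent with a diagonal entry of magnitude at least $2$; substituting into $(C_i-C_j) a = 0$ and using $a \geq 0$ then recursively implies $a_t = 0$ for every $t$, a contradiction.

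The main obstacle will be establishing cleanly the color-flip and diagonal lower bound on mirror pairs. This requires careful $C$-matrix bookkeeping along the conjugated reddening sequence, leveraging both the source-sequence structure of $\mathbf{S}$ and the abundant acyclic hypothesis on $T$; I would likely proceed by induction on $|\mathbf{M}|$, with the base case $|\mathbf{M}|=0$ reducing to the familiar diagonal $\pm 1$ evolution of $C$-matrices along an RSS of an abundant acyclic quiver.
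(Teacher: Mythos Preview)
Your reduction to mirror pairs is correct and matches the paper. However, the proposed mechanism for distinguishing mirror pairs---tracking $C$-matrices and arguing that every vertex changes color between paired positions---does not work. The color-flip claim is already false in small examples: take $T$ the abundant acyclic quiver on $\{1,2,3\}$ with $1\to 2\to 3$, $1\to 3$ (all weights~$2$), RSS $\mathbf S=1,2,3$, and $\mathbf M=2,1$. At the innermost mirror pair (positions $2$ and $5$ along $\mathbf M^{-1}\mathbf S\mathbf M=1,2,1,2,3,2,1$) one computes
\[
C_2=\begin{pmatrix}-1&0&0\\0&-1&0\\6&2&1\end{pmatrix},\qquad
C_5=\begin{pmatrix}-3&-2&0\\2&1&0\\0&0&-1\end{pmatrix},
\]
and vertex $1$ is red at \emph{both} positions. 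Since negating the $C$-block does not commute with mutation, there is no reason for colors at mirror positions to be opposite, and your route to row sign-coherence of $C_i-C_j$ collapses. The subsequent ``diagonal entry of magnitude $\geq 2$'' and recursive-vanishing steps are then unsupported (and it is unclear how abundance of $T$ would feed into diagonal entries of $C_i-C_j$ in the first place).

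The paper sidesteps all $C$-matrix bookkeeping. It notes that the extended quivers at the two endpoints of the sequence are $\mu_{\mathbf M}(T)\stackrel{A}{\to}I_k$ and $I_k\stackrel{A^T}{\to}\mu_{\mathbf M}(T)$, which are visibly different whenever $A\neq 0$ since the arrows between the two blocks point in opposite directions. For an arbitrary mirror pair, corresponding to a decomposition $\mathbf M=\mathbf M_1\mathbf M_2$, both extended quivers are obtained by applying the \emph{same} mutation $\mu_{\mathbf M_2^{-1}}$ to these two distinct endpoint quivers. Mutation is a bijection, so the images are distinct. This one-line transport argument replaces your entire inductive $C$-matrix analysis.
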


\begin{proof}
Each quiver appears at most twice on the sequence $[\mu_{\mathbf M}(T)]_{\mathbf M^{-1} \mathbf S \mathbf M}$; if we partition $\mathbf M = \mathbf M_1 \mathbf M_2,$ then $\mu_{\mathbf M_1}(T) = \mu_{\mathbf S \mathbf M_1}(T)$, and these are the only equalities. 
Let $I_k$ be the quiver with $k$ isolated vertices.
As 
$$I_k \stackrel{A}{\leftarrow} \mu_{\mathbf M}(T) \neq I_k \stackrel{A^T}{\rightarrow} \mu_{\mathbf M}(T),$$ after applying $\mu_{\mathbf M_2^{-1}}$ to each side we have that 
$$\mu_{\mathbf M_2^{-1}}(I_k \stackrel{A}{\leftarrow} \mu_{\mathbf M}(T)) \neq \mu_{\mathbf M_2^{-1}}(I_k \stackrel{A^T}{\rightarrow} \mu_{\mathbf M}(T)) = \mu_{\mathbf M^{-1} \mathbf S \mathbf M_1}(I_k \stackrel{A}{\leftarrow} \mu_{\mathbf M}(T)).$$
So $A$ is distinguishing for $[\mu_{\mathbf M}(T)]_{\mathbf M^{-1} \mathbf S \mathbf M}$. 
\end{proof}

Combining Lemma~\ref{lem:PositiveDistinguishingLMC} and Theorem~\ref{thm:distinguishing minimal} gives the following:

\begin{theorem}
\label{thm:GeneralizedLMCcycle}
In the setting of Theorem~\ref{thm:LMCGeneral}, if $H,T$ are both abundant acyclic quivers, $\mathbf M^{-1}\mathbf S_T \mathbf M \mathbf N^{-1} \mathbf S_H \mathbf N$ is reduced, and $A \neq 0$ then $\mathbf M^{-1}\mathbf S_T \mathbf M \mathbf N^{-1} \mathbf S_H \mathbf N$ is a simple mutation cycle of $Q = {\mu_{\mathbf M}(T) \stackrel{A}\rightarrow \mu_{\mathbf N}(H)}$.
\end{theorem}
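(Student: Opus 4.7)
The approach is a direct assembly of three earlier ingredients. I would set $T' := \mu_{\mathbf M}(T)$, $H' := \mu_{\mathbf N}(H)$, $\mathbf M_{T'} := \mathbf M^{-1}\mathbf S_T \mathbf M$, $\mathbf M_{H'} := \mathbf N^{-1}\mathbf S_H \mathbf N$, and $\mathbf W := \mathbf M_{T'}\mathbf M_{H'}$, so that $Q = T' \stackrel{A}{\rightarrow} H'$ and the proposed cycle is $\mathbf W$. The strategy is Theorem~\ref{thm:LMCGeneral} $\Rightarrow$ Lemma~\ref{lem:PositiveDistinguishingLMC} $\Rightarrow$ Theorem~\ref{thm:distinguishing minimal}.

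First I would certify that $\mathbf W$ is a genuine mutation cycle. Theorem~\ref{thm:LMCGeneral} already supplies $\mu_{\mathbf W}(Q) = Q$. Because $T$ and $H$ are abundant, every mutable vertex of $Q$ has at least two neighbors on its own side, so $Q$ has no isolated vertex; combined with the hypothesis that $\mathbf W$ is reduced, Definition~\ref{def:mugraph} then certifies $\mathbf W$ as a mutation cycle.

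Next I would verify the hypotheses of Theorem~\ref{thm:distinguishing minimal} applied to the quadruple $(T', H', \mathbf M_{T'}, \mathbf M_{H'})$. By Theorem~\ref{thm:muller redd}, $\mathbf M_{T'}$ and $\mathbf M_{H'}$ are reddening sequences of $T'$ and $H'$, and by Lemma~\ref{lem:abundant acyclic equality} both associated permutations are the identity (since $T'$ and $H'$ are mutation-equivalent to abundant acyclic quivers). Because $\mathbf M$ and $\mathbf N^{-1}$ mutate at disjoint vertex sets, any consecutive duplicate inside $\mathbf W$ must already lie inside one of the two blocks, so each of $\mathbf M_{T'}$ and $\mathbf M_{H'}$ is individually reduced. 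Lemma~\ref{lem:PositiveDistinguishingLMC}, applied first to $T$ with mutation sequence $\mathbf M$ and then to $H$ with mutation sequence $\mathbf N$, then yields the required distinguishing data: $A$ is distinguishing for $[T']_{\mathbf M_{T'}}$ and $A^T$ for $[H']_{\mathbf M_{H'}}$.

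The main point demanding care is the strictly-positive-row hypothesis of Theorem~\ref{thm:distinguishing minimal}, which I read as implicit in the standing hypothesis $A \neq 0$ (interpreted in the nonnegative-extension sense used throughout the section). Granting this reading, Theorem~\ref{thm:distinguishing minimal} closes the argument. The chief subtlety is therefore the bookkeeping in the previous paragraph---that conjugation by $\mathbf M$ and $\mathbf N$ preserves the reduced and reddening structure of each block, and that Lemma~\ref{lem:PositiveDistinguishingLMC} is available on both sides---after which simplicity of $\mathbf W$ is immediate.
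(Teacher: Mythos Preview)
Your approach is exactly the paper's: the entire proof there is the sentence ``Combining Lemma~\ref{lem:PositiveDistinguishingLMC} and Theorem~\ref{thm:distinguishing minimal} gives the following'', and you have simply unpacked that combination with the appropriate bookkeeping (reducedness of each block, identity permutations via Lemma~\ref{lem:abundant acyclic equality}, and Lemma~\ref{lem:PositiveDistinguishingLMC} applied on both sides). Your caution about the strictly-positive-row hypothesis of Theorem~\ref{thm:distinguishing minimal} is warranted---the paper does not address how $A\neq 0$ alone supplies it either---so your argument is at least as complete as the original.
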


\begin{example}
In Theorem~\ref{thm:LMCGeneral}, take $T = \stackrel{4}{\cdot}$, and $H$ to be the quiver $K'$ in Example~\ref{eg:keyRed}, with $\mathbf N = 2,3$ and $\mathbf M = \emptyset$.
Then the quiver $Q$ in Figure~\ref{fig:keyRed triang} lies on the simple mutation cycles $4,3,2,3,2,1,2,3$ and $4,3,2,1,2,3,1,2,1,2,3$.
\begin{figure}[ht]
    \centering
    \[\begin{tikzcd}[every arrow/.append style = {-{Stealth}}]
     & 2 & \\
     & 4 & \\
    1 &  & 3 
    \arrow[from=3-1,to=1-2,"35"]
    \arrow[from=1-2,to=3-3,"4"]
    \arrow[from=3-3,to=3-1,"9"]
    \arrow[from=2-2,to=3-1,"2"]
    \arrow[from=2-2,to=3-3,"3"']
    \arrow[from=2-2,to=1-2,"4"]
    \end{tikzcd}\]
    \caption{A Triangular Extension of Figure \ref{fig:first example}}
    \label{fig:keyRed triang}
\end{figure}
\end{example}

\begin{remark}
In \cite[Section~8]{LMC} the notion of a \emph{fully generic mutation cycle} was introduced.
These are families of $n$-vertex quivers which are parameterized by $\binom n 2$ positive integers, along with a mutation sequence $\mathbf i$ such that $\mathbf i$ is a mutation cycle for all quivers in the family.

Theorem~\ref{thm:GeneralizedLMCcycle} gives many new fully generic mutation cycles. 
By fixing $|H|, |T| > 0$, and the mutation sequences $\mathbf M, \mathbf N, \mathbf S_H, \mathbf S_T$ so that $\mathbf S_H, \mathbf S_T$ are permutations of the vertices in $H, T$, we get a fully generic mutation cycle with $\mathbf i$ equal to the reduction of $\mathbf M^{-1}\mathbf S_T \mathbf M \mathbf N^{-1} \mathbf S_H \mathbf N$ and our family of quivers $Q = {\mu_{\mathbf M}(T) \stackrel{A}\rightarrow \mu_{\mathbf N}(H)}$ parameterized by the choice of weights ($\geq 2$) in $H, T$ and entries of $A$. (The orientations of arrows in $H,T$ are chosen so that $S_H, S_T$ are RSSs.) 
\end{remark}

\begin{remark}
\cite[Theorem~1.1]{LMC} also shows that the mutation cycles constructed there cannot be paved by short cycles.
While we are not aware of any examples where the mutation cycles in Theorem~\ref{thm:GeneralizedLMCcycle} are paved by short cycles, the same proof techniques from \cite{LMC} do not immediately apply, except in the case where $|H|=3$ and $|T|=1$.
We sketch the proof in this case, relying heavily on results from \cite{LMC}.
\end{remark}

\begin{theorem}
\label{thm:4vertNoShortcut}
Let $H$ be an abundant acyclic quiver with RSS~$\mathbf S$ and $|H|=3$.
Let $A$ be a $3\times 1$ matrix with entries $\geq 2$.
Let $\stackrel{4}{\cdot}$ denote the quiver whose only vertex is labeled $4$. 
Then for any mutation sequence $\mathbf M$ such that $\mathbf M^{-1} \mathbf S \mathbf M$ is reduced, the mutation cycle $[\stackrel{4}{\cdot} \stackrel{A}{\rightarrow} \mu_{\mathbf M}(H)]_{4\mathbf M^{-1} \mathbf S \mathbf M}$ is the unique simple mutation cycle in the mutation class $[\stackrel{4}{\cdot} \stackrel{A}{\rightarrow} \mu_{\mathbf M}(H)]$.
\end{theorem}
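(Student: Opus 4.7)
The plan is to invoke Theorem~\ref{thm:GeneralizedLMCcycle} for existence of the cycle, then adapt the uniqueness argument from \cite[Theorem~1.1]{LMC} using the special structure available when $|H|=3$ and $|T|=1$. The strategy is to classify the forkless part of $\big [Q \big ]$, where $Q = \stackrel{4}{\cdot} \stackrel{A}{\rightarrow} \mu_{\mathbf M}(H)$, and to verify that it forms a single cycle in the mutation graph that coincides with $[Q]_{4\mathbf M^{-1}\mathbf S \mathbf M}$. Restricting to the forkless part is justified by the Tree Lemma \cite[Lemma~2.8]{warkentin2014exchange}: arguing exactly as in Lemma~\ref{lem:reddening forkless part}, any reduced simple mutation cycle must remain in the forkless part, since once a mutation sequence enters a fork all subsequent quivers are forks until a mutation at the point of return, and a simple cycle cannot backtrack.

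The key step is the structural analysis. Since every entry of $A$ is $\geq 2$, each pair $\{4, h\}$ with $h \in H$ has $\geq 2$ arrows between them in $Q$, and this abundance property is preserved throughout the mutation class. The induced $H$-subquiver of any $R \in \big [Q \big ]$ lies in $\big [\mu_{\mathbf M}(H)\big ]$, and if $R$ is forkless then (using \cite[Lemma~2.15]{MR4695532} together with the fork condition applied across vertex $4$) the induced $H$-subquiver is also forkless. The forkless part of a rank-$3$ abundant-acyclic mutation class is a finite, classically described structure, so adjoining vertex $4$ with its $A$-arrows produces a finite list of candidate forkless quivers in $\big [Q \big ]$. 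One then checks edge-by-edge that mutations at ``non-source, non-sink'' $H$-vertices create forks (via abundance together with the strict fork inequality $b_{ji} > \max(b_{ir}, b_{rj})$), so the only forkless mutations at $H$-vertices are source or sink mutations, matching the sequence $\mathbf M^{-1}\mathbf S \mathbf M$ traced through the forkless part.

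The main obstacle is this case analysis, in particular ruling out ``chord'' edges in the forkless part that might yield a shorter simple cycle, and showing that the single $\mu_4$ bridges the two orientation classes of vertex $4$ exactly once. The abundance hypothesis on $A$ (entries $\geq 2$) is essential throughout: it is what guarantees that mutating at an $H$-vertex in the ``wrong'' position forces the strict fork inequality, pushing the quiver out of the forkless part. Completing this verification yields that $[Q]_{4\mathbf M^{-1}\mathbf S \mathbf M}$ exhausts the forkless part as a single cycle with no chords, so it is the unique simple mutation cycle; the technical details follow closely the style of \cite[Sections 6--8]{LMC}.
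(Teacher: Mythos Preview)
Your strategy---restrict to the forkless part via the Tree Lemma and show it coincides with the given cycle---matches the paper's, but you have skipped the one nontrivial step, and one of your structural claims is false as stated.

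The assertion that ``this abundance property is preserved throughout the mutation class'' is precisely what must be proved; it is not automatic. A $3$-cycle with $b_{ik}=b_{kj}=2$ and $b_{ji}=3$ already becomes non-abundant after mutating at~$k$. The paper isolates this as Lemma~\ref{lem:arrowsGrow} and proves abundance only \emph{along the cycle} $4\,\mathbf M^{-1}\mathbf S\,\mathbf M$, by induction on the distance from the two triangular-extension quivers $\stackrel{4}{\cdot}\stackrel{A}{\rightarrow}\mu_{\mathbf M}(H)$ and $\stackrel{4}{\cdot}\stackrel{\hspace{3pt}A^T}{\leftarrow}\mu_{\mathbf M}(H)$, tracking which weights adjacent to vertex~$4$ can drop via the ascent/descent machinery of \cite[Section~6]{LMC}. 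With abundance on the cycle in hand, the paper finishes in one line: every abundant $4$-vertex quiver has at least two \emph{exits} (in the sense of \cite{LMC}), so the two cycle edges at each quiver are the only non-exit mutations, forcing uniqueness.

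Your claim that ``the induced $H$-subquiver of any $R\in[Q]$ lies in $[\mu_{\mathbf M}(H)]$'' is also false. After one mutation at an $H$-vertex, vertex~$4$ acquires both incoming and outgoing arrows to~$H$; a subsequent mutation at~$4$ then adds arrows inside~$H$ that are not produced by any mutation of~$H$ alone, and for generic~$A$ the resulting $H$-subquiver lies outside $[\mu_{\mathbf M}(H)]$. Along the given cycle this never occurs (the single $\mu_4$ is a source mutation), but you cannot use this restriction-to-$H$ argument to classify the forkless part of $[Q]$ without already knowing it coincides with the cycle---which is what you are trying to prove.
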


\begin{lemma}
\label{lem:arrowsGrow}
Every quiver in the mutation cycle $[\stackrel{4}{\cdot} \stackrel{A}{\rightarrow} \mu_{\mathbf M}(H)]_{4 \mathbf M^{-1} \mathbf S \mathbf M}$ is abundant.
\end{lemma}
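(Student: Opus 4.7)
The plan is to check that abundance is maintained at every quiver on the cycle, by tracking the $H$-component and the arrows to vertex $4$ separately. The starting quiver $\stackrel{4}{\cdot} \stackrel{A}{\rightarrow} \mu_{\mathbf M}(H)$ is abundant: its $H$-component $\mu_{\mathbf M}(H)$ lies in the mutation class of the abundant acyclic quiver $H$, which (by the remark preceding Theorem~\ref{thm:finiteKeys}) consists entirely of abundant acyclic quivers and forks, both abundant by definition. The arrows between vertex $4$ and the $H$-component are given by $A$, all of whose entries are $\geq 2$.

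The first mutation, at vertex $4$, is a source mutation (since $A$ is nonnegative), which simply reverses the arrows between $4$ and the $H$-component and leaves the quiver $\mu_{\mathbf M}(H) \stackrel{A^T}{\rightarrow} \stackrel{4}{\cdot}$. All subsequent mutations in $\mathbf M^{-1}\mathbf S \mathbf M$ are at vertices of the $H$-component. Because mutation commutes with restriction to a full subquiver, the $H$-subquiver at each intermediate stage is obtained from $\mu_{\mathbf M}(H)$ by the corresponding prefix of these mutations; hence it remains in the mutation class of $H$, and is thus abundant by the same argument.

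To bound the arrows between vertex $4$ and the $H$-component at an intermediate stage, I would apply Theorem~\ref{thm:extensions linear comb of framing} to the triangular extension $\mu_{\mathbf M}(H) \stackrel{A^T}{\rightarrow} \stackrel{4}{\cdot}$: after any prefix $\mathbf N$ of the remaining mutations, the adjacency vector between vertex $4$ and the $H$-vertices is precisely $C_{\mathbf N} A^T$. By Corollary~\ref{cor:oris follow}, each row of $C_{\mathbf N} A^T$ is sign-coherent; since $C$-matrices are unimodular, no row of $C_{\mathbf N}$ is identically zero; combined with the entries of $A^T$ being $\geq 2$, each entry of $C_{\mathbf N} A^T$ must have absolute value $\geq 2$. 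Assembling the $H$-subquiver bound with this bound yields abundance of the full quiver at every stage of the cycle.

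The main (modest) obstacle is the bookkeeping around the role-swap induced by the initial mutation at vertex $4$: after that step the $H$-component plays the role of ``$T$'' in Theorem~\ref{thm:extensions linear comb of framing}, and one must verify that the sign-coherence conclusion of Corollary~\ref{cor:oris follow} still applies in this reversed framing. Once that setup is right, the argument reduces to sign-coherence plus unimodularity of $C$-matrices, with no further computation needed.
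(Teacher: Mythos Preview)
Your argument is correct and takes a genuinely different route from the paper's. The paper proves the lemma by induction on the distance along the cycle to $\stackrel{4}{\cdot} \stackrel{A}{\rightarrow} \mu_{\mathbf M}(H)$ or $\stackrel{4}{\cdot} \stackrel{\hspace{3pt} A^T}{\leftarrow} \mu_{\mathbf M}(H)$, and at each step invokes the exit/ascent/descent machinery of~\cite{LMC} to check that the three weights incident to vertex~$4$ cannot drop below~$2$; this requires a separate case analysis near the midpoint of the cycle (the passage through $\mathbf S$). Your approach bypasses all of that by observing that, after the initial source mutation at~$4$, the adjacency vector to vertex~$4$ is globally described by $C_{\mathbf N} A^T$ via Theorem~\ref{thm:extensions linear comb of framing}, and then the single observation that a sign-coherent, nonzero integer row dotted with a vector of entries $\geq 2$ has absolute value $\geq 2$ finishes it. This is cleaner and, unlike the paper's proof, does not use $|H|=3$ in any essential way for the vertex-$4$ bound. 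The only pieces you invoke that are not explicitly stated in the paper are the unimodularity of $C$-matrices (standard, e.g.\ via tropical duality $C G^T = I$) and the fact that the entire mutation class of an abundant acyclic quiver consists of abundant acyclic quivers and forks; the latter follows by combining the two sentences of the remark you cite with the connectedness of the forkless part noted in Definition~\ref{def:forks}, so you might make that inference explicit.
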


\begin{proof}
The subquiver supported by the vertices in $H$ in every quiver along the mutation cycle is abundant.
It remains to show that the weights adjacent to vertex $4$ are always $\geq 2$.
We argue by induction on the distance (on the mutation cycle) to $\stackrel{4}{\cdot} \stackrel{A}{\rightarrow} \mu_{\mathbf M}(H)$ or $\stackrel{4}{\cdot} \stackrel{\hspace{3pt} A^T}{\leftarrow} \mu_{\mathbf M}(H)$.
By construction, $\stackrel{4}{\cdot} \stackrel{A}{\rightarrow} \mu_{\mathbf M}(H)$ is abundant. As a source mutation does not change the weights, $\stackrel{4}{\cdot} \stackrel{\hspace{3pt} A^T}{\leftarrow} \mu_{\mathbf M}(H)$ is abundant. This completes the base case.

Suppose the claim is true for all quivers of distance $k$ from $\stackrel{4}{\cdot} \stackrel{A}{\rightarrow} \mu_{\mathbf M}(H)$ or~${\stackrel{4}{\cdot} \stackrel{\hspace{3pt} A^T}{\leftarrow} \mu_{\mathbf M}(H)}$. Suppose $k < |\mathbf M|$.
Then without loss of generality, say $Q'\in [\stackrel{4}{\cdot} \stackrel{A}{\rightarrow} \mu_{\mathbf M}(H)]_{4 \mathbf M^{-1} \mathbf S \mathbf M}$ is distance $k+1$ from $\stackrel{4}{\cdot} \stackrel{A}{\rightarrow} \mu_{\mathbf M}(H)$, and $\mu_i(Q')$ is distance $k$ from $\stackrel{4}{\cdot} \stackrel{A}{\rightarrow} \mu_{\mathbf M}(H)$.
Since $i$ is not an \emph{exit} (\cite[Definition~6.10]{LMC}) of $\mu_i(Q')$ but is a descent of the subquiver supported by the vertices in $H$, it must be an \emph{ascent}(\cite[Definition~3.3]{LMC}) in some $3$-vertex subquiver of $\mu_i(Q')$ involving vertex $4$.
Note that vertex~$4$ is in at most one $3$-cycle.
Thus of the three weights in $Q'$ adjacent to vertex $4$, one increased and two did not change from the corresponding weights of $\mu_i(Q')$. 
In particular, $Q'$ is abundant and every $3$-vertex subquiver involving $i$ and $4$ is either acyclic or has a \emph{descent} (\cite[Definition~3.3]{LMC}) at~$i$.

Now suppose that $k=|\mathbf M|$. Note that all quivers are at most $|\mathbf M|+1$ mutations from $\stackrel{4}{\cdot} \stackrel{A}{\rightarrow} \mu_{\mathbf M}(H)$ or $\stackrel{4}{\cdot} \stackrel{\hspace{3pt} A^T}{\leftarrow} \mu_{\mathbf M}(H)$.
Suppose without loss of generality that $\mathbf S=3,2,1$.
Because $\mathbf M^{-1} \mathbf S \mathbf M$ is reduced, the first vertex in $\mathbf M$ is thus $2$.
We show that 
$$Q' = \mu_{\mathbf M^{-1}1}(\stackrel{4}{\cdot} \stackrel{A}{\rightarrow} \mu_{\mathbf M}(H)) = \mu_{4\mathbf M^{-1}3,2}(\stackrel{4}{\cdot} \stackrel{A}{\rightarrow} \mu_{\mathbf M}(H))$$
is abundant.
There are two cases.
If $1$ is a sink, then the weights of $Q'$ agree with $\mu_1(Q')$, so it is abundant.
Because the subquiver supported by $1,2,4$ is either acyclic or has a descent at $2$ (by the induction argument when $k < |\mathbf M|$), the weight between $2,4$ is larger in $Q'$ than in $\mu_1(Q')$.
So it remains to check that the weight between $3$ and $4$ is larger than $2$.
For this we must turn our attention to the subquiver supported by $2,3,4$ in 
$$Q'' = \mu_{2,3}(Q') = \mu_{4\mathbf M^{-1}}(\stackrel{4}{\cdot} \stackrel{A}{\rightarrow} \mu_{\mathbf M}(H)) = \mu_{\mathbf M^{-1}}(\stackrel{4}{\cdot} \stackrel{\hspace{3pt} A^T}{\leftarrow} \mu_{\mathbf M}(H)).$$ 
By the induction argument when $k < |\mathbf M|$, it is either abundant acyclic or abundant with descent at $2$. 
In either case, this subquiver is still abundant in $Q'$.
A similar argument shows that $\mu_3(Q'')$ is abundant.
\end{proof}

\begin{proof}[Proof of Theorem~\ref{thm:4vertNoShortcut}]
Every abundant $4$-vertex quiver has (at least) $2$ mutations which are exits, which are not on any mutation cycle.
Thus by Lemma~\ref{lem:arrowsGrow} the only possible sequence of mutations that could give a mutation cycle is $4\mathbf M^{-1} \mathbf S \mathbf M$.
\end{proof}

\begin{example}
\label{eg:half-inf and A3}
Let $Q$ be the $12$-vertex quiver from Example~\ref{eg:half finite}, with $\mathbf S_\bullet = 1,3,5,7,9,11$ and $\mathbf S_\circ = 2,4,6,8,10,12$.
The $15$-vertex quiver $P$ shown in Figure~\ref{fig:half finite triang} is a triangular extension of $Q$ and a quiver of finite type $A_3$.
\begin{figure}[ht]
    \centering
    \newcommand{\vertsize}{2pt}
    \begin{tikzpicture}
    \foreach \i in {1,4,7,10} {
    	\draw[fill] (0,0)++(200-\i*90:1.5cm) circle (\vertsize) node[above=4pt]{$\i$} coordinate (v\i);
    	}
    \foreach \i in {2, 5,8,11} {
      \draw[fill] (3.2,0)++(290-\i*90:1.5cm) circle (\vertsize) node[above=4pt]{$\i$} coordinate (v\i);
      }
      
    \foreach \i in {3,6,9,12} {
    	\draw[fill] (6.4,0)++(20-\i*90:1.5cm) circle (\vertsize) node[above=4pt]{$\i$} coordinate (v\i);
    	}
    
    \draw[fill] (0.8,4) circle (\vertsize) node[above=4pt]{$13$} coordinate (v13);
    \draw[fill] (3.4,3) circle (\vertsize) node[above=4pt]{$14$} coordinate (v14);
    \draw[fill] (7.2,4) circle (\vertsize) node[above=4pt]{$15$} coordinate (v15);
    	
    \draw[-stealth, shorten >=5pt, shorten <= 5pt] (v4) -- (v7) node[pos=0.4, right=3pt]{\small $2$};
    \draw[-stealth, shorten >=5pt, shorten <= 5pt] (v10) -- (v7);
    \draw[-stealth, shorten >=5pt, shorten <= 5pt] (v10) -- (v1);
    \draw[-stealth, shorten >=5pt, shorten <= 5pt] (v4) -- (v1) node[pos=0.35, right]{\small $3$};
    
    \draw[-stealth, shorten >=5pt, shorten <= 5pt] (v2) -- (v5) node[pos=0.65, right]{\small $3$};
    \draw[-stealth, shorten >=5pt, shorten <= 5pt] (v8) -- (v5) node[pos=0.6, right=3pt]{\small $2$};
    \draw[-stealth, shorten >=5pt, shorten <= 5pt] (v2) -- (v11);
    \draw[-stealth, shorten >=5pt, shorten <= 5pt] (v8) -- (v11);
    
    \draw[-stealth, shorten >=5pt, shorten <= 5pt] (v6) -- (v9) node[pos=0.4, right=3pt]{\small $2$};
    \draw[-stealth, shorten >=5pt, shorten <= 5pt] (v12) -- (v3);
    \draw[-stealth, shorten >=5pt, shorten <= 5pt] (v12) -- (v9);
    \draw[-stealth, shorten >=5pt, shorten <= 5pt] (v6) -- (v3) node[pos=0.35, right]{\small $3$};

    \draw[-stealth, shorten >=5pt, shorten <= 5pt] (v1) -- (v2);
    \draw[-stealth, shorten >=5pt, shorten <= 5pt] (v3) -- (v2);
    
    \draw[-stealth, shorten >=5pt, shorten <= 5pt] (v7) -- (v8);
    \draw[-stealth, shorten >=5pt, shorten <= 5pt] (v9) -- (v8);
    
    \draw[-stealth, shorten >=5pt, shorten <= 5pt] (v5) -- (v4);
    \draw[-stealth, shorten >=5pt, shorten <= 5pt] (v5) -- (v6);
    
    \draw[-stealth, shorten >=5pt, shorten <= 5pt] (v11) -- (v10);
    \draw[-stealth, shorten >=5pt, shorten <= 5pt] (v11) -- (v12);
    
    \draw[-stealth, shorten >=5pt, shorten <= 5pt] (v1) -- (v13);\draw[-stealth, shorten >=5pt, shorten <= 5pt] (v2) -- (v14);\draw[-stealth, shorten >=5pt, shorten <= 5pt] (v3) -- (v15);
    
    \draw[-stealth, shorten >=5pt, shorten <= 5pt] (v13) -- (v14);
    \draw[-stealth, shorten >=5pt, shorten <= 5pt] (v14) -- (v15);
    \draw[-stealth, shorten >=5pt, shorten <= 5pt] (v15) -- (v13);
     \end{tikzpicture}
    \caption{A triangular extension of Figure~\ref{fig:half finite recurrent etc}.}
    \label{fig:half finite triang}
\end{figure}

The quiver supported by vertices $\{13,14,15\}$ has several reddening sequences, including $\mathbf M_1 = 14,15,14,13,14$, $\mathbf M_2 = 13,14,15,13$ and $\mathbf M_3 = 13,15,13,14,13$ with respective associated permutations the identity, $\tau = (13,15)$, and $\rho = (13,15,14)$.

By Corollary~\ref{cor:extensions give iso cycles} $P$ is isomorphic to $\mu_{\mathbf S_\circ \mathbf S_\bullet \mathbf S_\circ \mathbf S_\bullet \mathbf M_i}(P)$ for each $i \in \{1,2,3\}$.
Recall that the associated permutation of the reddening sequence $\mathbf S = \mathbf S_\circ \mathbf S_\bullet \mathbf S_\circ \mathbf S_\bullet$ of $Q$ is $\sigma = (1,3) (4,6)(7,9)(10,12).$ 
Thus by Proposition~\ref{prop:extensions without equality give cycles}, we have that
$$P = \mu_{\mathbf S \mathbf M_1 \sigma(\mathbf S) \mathbf M_1}(P),$$
$$P = \mu_{\mathbf S \mathbf M_2 \sigma(\mathbf S) \mathbf \tau(\mathbf M_2)}(P),$$
$$P = \mu_{\mathbf S \mathbf M_3 \sigma(\mathbf S) \rho(\mathbf M_3) \mathbf S \rho^2(\mathbf M_3) \sigma(\mathbf S) \mathbf M_3 \mathbf S \rho(\mathbf M_3) \sigma(\mathbf S) \rho^2(\mathbf M_3)}(P).$$
Direct computation shows that these are simple mutation cycles of lengths $58, 56$ and $174$ respectively.
\end{example}

\begin{example}
\label{eg:S5 and R}
Consider the triangular extension $Q$ of $T_5$ (from Example~\ref{eg:punctured sphere 5})  and $R_{3,3}$ (from Example~\ref{eg:grassmannian reddening}) shown in Figure~\ref{fig:s5 and R}.
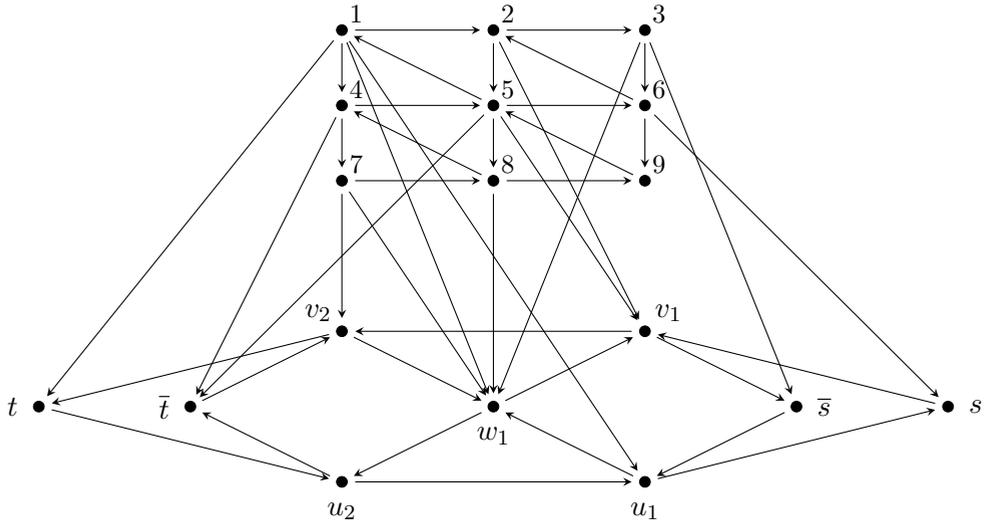
\begin{figure}[ht]
    \centering
    \newcommand{\vertsize}{2pt}
    \begin{tikzpicture}
    \draw[fill] (6,0) circle (\vertsize) node[right=4pt]{$s$} coordinate (s);
    \draw[fill] (-6,0) circle (\vertsize) node[left=4pt]{$t$} coordinate (t);
    \draw[fill] (4,0) circle (\vertsize) node[right=4pt]{$\overline s$} coordinate (sp);
    \draw[fill] (-4,0) circle (\vertsize) node[left=4pt]{$\overline t$} coordinate (tp);

    \draw[fill] (-2,1) circle (\vertsize) node[above left=0pt]{$v_2$} coordinate (v2);
    \draw[fill] (2,1) circle (\vertsize) node[above right=0pt]{$v_1$} coordinate (v1);
    
    \draw[fill] (-2,-1) circle (\vertsize) node[below=4pt]{$u_2$} coordinate (u2);
    \draw[fill] (2,-1) circle (\vertsize) node[below=4pt]{$u_1$} coordinate (u1);
    
    \draw[fill] (0,0) circle (\vertsize) node[below=4pt]{$w_1$} coordinate (w1);
    
    \foreach \i/\j/\n in {-2/5/1, -2/4/4, -2/3/7, 0/5/2, 0/4/5, 0/3/8, 2/5/3, 2/4/6, 2/3/9} {
    \draw[fill] (\i,\j) circle (\vertsize) node[above right=-1pt]{\small $\n$} coordinate (vert\n);
    }
    
    \foreach \i/\j in {1/2, 2/3, 4/5, 5/6, 7/8, 8/9, 1/4, 4/7, 2/5, 5/8, 3/6, 6/9, 5/1, 6/2, 8/4, 9/5} {
    \draw[-stealth, shorten >=5pt, shorten <= 5pt] (vert\i) -- (vert\j);
    }

    \draw[-stealth, shorten >=5pt, shorten <= 5pt] (s) -- (v1);
    \draw[-stealth, shorten >=5pt, shorten <= 5pt] (v1) -- (v2);
    \draw[-stealth, shorten >=5pt, shorten <= 5pt] (v2) -- (t);
    \draw[-stealth, shorten >=5pt, shorten <= 5pt] (t) -- (u2);
    \draw[-stealth, shorten >=5pt, shorten <= 5pt] (u2) -- (u1);
    \draw[-stealth, shorten >=5pt, shorten <= 5pt] (u1) -- (s);

    \draw[-stealth, shorten >=5pt, shorten <= 5pt] (v1) -- (sp);
    \draw[-stealth, shorten >=5pt, shorten <= 5pt] (sp) -- (u1);
    
    \draw[-stealth, shorten >=5pt, shorten <= 5pt] (u2) -- (tp);
    \draw[-stealth, shorten >=5pt, shorten <= 5pt] (tp) -- (v2);

    \draw[-stealth, shorten >=5pt, shorten <= 5pt] (v2) -- (w1);
    \draw[-stealth, shorten >=5pt, shorten <= 5pt] (w1) -- (v1);
    \draw[-stealth, shorten >=5pt, shorten <= 5pt] (u1) -- (w1);
    \draw[-stealth, shorten >=5pt, shorten <= 5pt] (w1) -- (u2);

    \draw[-stealth, shorten >=5pt, shorten <= 5pt] (vert1) -- (t);
    \draw[-stealth, shorten >=5pt, shorten <= 5pt] (vert1) -- (u1);
    \draw[-stealth, shorten >=5pt, shorten <= 5pt] (vert1) -- (w1);
    \draw[-stealth, shorten >=5pt, shorten <= 5pt] (vert2) -- (v1);
    \draw[-stealth, shorten >=5pt, shorten <= 5pt] (vert3) -- (sp);
    \draw[-stealth, shorten >=5pt, shorten <= 5pt] (vert3) -- (w1);
    \draw[-stealth, shorten >=5pt, shorten <= 5pt] (vert4) -- (tp);
    \draw[-stealth, shorten >=5pt, shorten <= 5pt] (vert5) -- (v1);
    \draw[-stealth, shorten >=5pt, shorten <= 5pt] (vert5) -- (tp);
    \draw[-stealth, shorten >=5pt, shorten <= 5pt] (vert6) -- (s);
    \draw[-stealth, shorten >=5pt, shorten <= 5pt] (vert7) -- (v2);
    \draw[-stealth, shorten >=5pt, shorten <= 5pt] (vert7) -- (w1);
    \draw[-stealth, shorten >=5pt, shorten <= 5pt] (vert8) -- (w1);
     \end{tikzpicture}
    \caption{A triangular extension of $T_5$ And $R_{3,3}$.}
    \label{fig:s5 and R}
\end{figure}

Let $\mathbf S$ be the reddening sequence of $T_5$ with associated permutation $\sigma$ (of order $4$) described in Example~\ref{eg:punctured sphere 5} and $\mathbf S'$ be the reddening sequence of $R_{3,3}$ with associated permutation $\sigma'$ (of order $2$) described in Example~\ref{eg:grassmannian reddening}. 
This quiver lies on a simple mutation cycle of length $152$:
$$P = \mu_{\mathbf S' \mathbf S \sigma'(\mathbf S' ) \sigma(\mathbf S ) \mathbf S' \sigma^2(\mathbf S ) \sigma'(\mathbf S') \sigma^3(\mathbf S)}(P).$$ 
\end{example}

\begin{figure}[ht]
        \centering
    \begin{tikzcd}[every arrow/.append style = {-{Stealth}}]
    & 4 & & & 8\\
    & 3 & & & 7\\
    1 & & 2 & 5 & & 6 \\
    \arrow[from=2-2, to=1-2, "2"]
    \arrow[from=3-1, to=2-2]
    \arrow[from=3-3, to=2-2]
    \arrow[from=1-2, to=3-1]
    \arrow[from=3-1, to=3-3]
    \arrow[from=1-2, to=3-3]
    \arrow[from=2-5, to=1-5, "2"]
    \arrow[from=3-4, to=2-5]
    \arrow[from=3-6, to=2-5]
    \arrow[from=1-5, to=3-4]
    \arrow[from=3-4, to=3-6]
    \arrow[from=1-5, to=3-6]
    \arrow[from=3-3, to=3-4]
    \arrow[from=2-2, to=2-5]
    \arrow[from=1-2, to=1-5]
    \end{tikzcd}
        \caption{Triangular extension of two Dreaded Tori.}
        \label{fig:triangular-dreaded-tori}
    \end{figure}
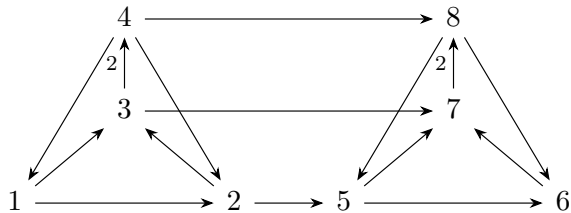

\begin{example}
    Let $Q$ be the quiver in Figure~\ref{fig:triangular-dreaded-tori}, a triangular extension of two copies of the Dreaded Torus (see Example~\ref{fig:dreaded torus}).
    Then $Q$ has a mutation cycle of length $24$: 
    $$\mathbf S=1,3,4,2,1,3,5,7,8,6,5 ,7,4,2,1,3,4,2,8,6,5,7,8,6.$$
    

    Continuing this theme, the quiver in Figure~\ref{fig:triangular-triangular-dreaded-tori} is a triangular extension of another copy of the Dreaded Torus and the quiver in Figure~\ref{fig:triangular-dreaded-tori}.
    By Theorem~\ref{thm:extension red seq}, the mutation cycle $\mathbf S$ is a reddening sequence of $Q$ (with associated permutation the identity).
    Thus by Proposition~\ref{prop:extensions with equality give cycles}, $\mathbf S, 9,11,12,10,9,11, \mathbf S, 12,10,9,11,12,10$ is a mutation cycle for this quiver. 
\end{example}

\begin{remark}
    By choosing different reddening sequences of the dreaded tori, the quiver in Figure~\ref{fig:triangular-triangular-dreaded-tori} can be shown to lie on several more mutation cycles.
\end{remark}

\begin{figure}[ht]
    \centering
        \centering
    \begin{tikzcd}[every arrow/.append style = {-{Stealth}}]
    & 4 & & & 8 & & & 12\\
    & 3 & & & 7 & & & 11\\
    1 & & 2 & 5 & & 6 & 9 & & 10 \\
    \arrow[from=2-2, to=1-2, "2"]
    \arrow[from=3-1, to=2-2]
    \arrow[from=3-3, to=2-2]
    \arrow[from=1-2, to=3-1]
    \arrow[from=3-1, to=3-3]
    \arrow[from=1-2, to=3-3]
    \arrow[from=2-5, to=1-5, "2"]
    \arrow[from=3-4, to=2-5]
    \arrow[from=3-6, to=2-5]
    \arrow[from=1-5, to=3-4]
    \arrow[from=3-4, to=3-6]
    \arrow[from=1-5, to=3-6]
    \arrow[from=2-8, to=1-8, "2"]
    \arrow[from=3-7, to=2-8]
    \arrow[from=3-9, to=2-8]
    \arrow[from=1-8, to=3-7]
    \arrow[from=3-7, to=3-9]
    \arrow[from=1-8, to=3-9]
    \arrow[from=3-3, to=3-4]
    \arrow[from=2-2, to=2-5]
    \arrow[from=1-2, to=1-5]
    \arrow[from=3-6, to=2-8]
    \arrow[from=3-6, to=3-7]
    \arrow[from=1-5, to=2-8]
    \end{tikzcd}
    \caption{A triangular extension of a triangular extension of two Dreaded Tori and one Dreaded Tori.}
    \label{fig:triangular-triangular-dreaded-tori}
\end{figure}
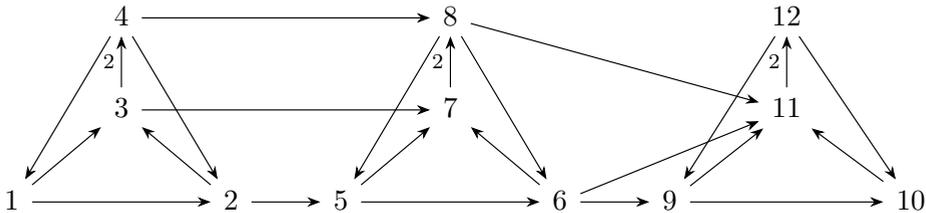

\begin{example}
Consider the triangular extension $R'' \stackrel{A}{\rightarrow} Q$, where $R''$ is as defined in Example~\ref{eg:positroid reddening 2}, $Q$ is from Example~\ref{eg:banff but tricky} with the vertices relabeled $i \mapsto i+9$, and 
$$A = \begin{pmatrix}
0 & 0 & 0 & 0 & 0 &0 \\
0 & 0 & 1 & 0 & 0 &0 \\
0 & 3 & 0 & 0 & 0 &1 \\
0 & 0 & 0 & 0 & 0 &0 \\
0 & 0 & 1 & 0 & 0 &0 \\
0 & 0 & 0 & 0 & 0 &0 \\
0 & 3 & 0 & 0 & 0 &1 \\
0 & 0 & 0 & 0 & 0 &0 \\
\end{pmatrix}.$$

The quiver $R'' \stackrel{A}{\rightarrow} Q$ is shown in Figure~\ref{fig:banff extension example}. 
Recall that $\mathbf S''$ is a reddening sequence of $R''$ with associated permutation $\sigma = (2, 5) (3, 8) (4, 9, 7)$. 
Let $\mathbf N$ be the reddening sequence of $Q$, with the vertices relabeled $i \mapsto i+9$.
Then~$R'' \stackrel{A}{\rightarrow} Q$ lies on the simple mutation cycle of length $336$
$$\mathbf S'' \mathbf N \sigma(\mathbf S'') \mathbf N \sigma^2(\mathbf S'') \mathbf N \sigma^3(\mathbf S'') \mathbf N \sigma^4(\mathbf S'') \mathbf N \sigma^5(\mathbf S'') \mathbf N.$$

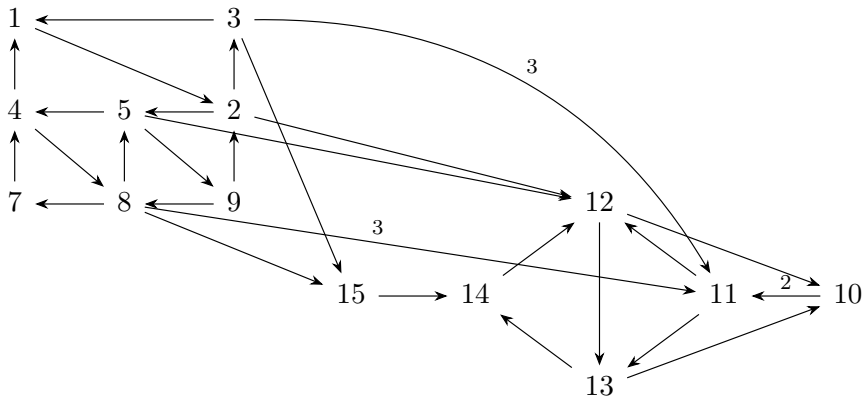
\begin{figure}[ht]
    \centering
\begin{tikzcd}[every arrow/.append style = {-{Stealth}}]
    1 &  & 3 & & & & & \\
    4 & 5 & 2 & & & & & \\
    7 & 8 & 9 & & & 12 & & \\
      &   &   & 15 & 14 &   & 11 & 10 \\
      &   &   &   &   & 13 &   & \\
    \arrow[from=1-1, to=2-3]
    \arrow[from=1-3, to=1-1]
    \arrow[from=2-1, to=3-2]
    \arrow[from=2-1, to=1-1]
    \arrow[from=2-2, to=2-1]
    \arrow[from=2-2, to=3-3]
    \arrow[from=2-3, to=2-2]
    \arrow[from=2-3, to=1-3]
    \arrow[from=3-1, to=2-1]
    \arrow[from=3-2, to=2-2]
    \arrow[from=3-2, to=3-1]
    \arrow[from=3-3, to=2-3]
    \arrow[from=3-3, to=3-2]
    \arrow[from=3-2, to=4-4]
    \arrow[from=1-3, to=4-4]
    \arrow[from=3-2, to=4-7, "3", pos=0.4]
    \arrow[from=1-3, to=4-7, "3", pos=0.5, bend left=30]
    \arrow[from=4-4, to=4-5]
    \arrow[from=4-5, to=3-6]
    \arrow[from=4-7, to=3-6]
    \arrow[from=4-7, to=5-6]
    \arrow[from=4-8, to=4-7, "2"']
    \arrow[from=3-6, to=4-8]
    \arrow[from=5-6, to=4-8]
    \arrow[from=3-6, to=5-6]
    \arrow[from=5-6, to=4-5]
    \arrow[from=2-2, to=3-6]
    \arrow[from=2-3, to=3-6]
    \end{tikzcd}
    \caption{A triangular extension of $R''$ and the quiver from Figure~\ref{fig:banff example}. Note this is a $14$-vertex quiver, there is no vertex labeled $6$.}
    \label{fig:banff extension example}
\end{figure}

\end{example}

\hide{
\begin{example}
The quiver $A_3 = 1 \leftarrow 2 \leftarrow 3$ lies on several reddening sequences.
For example:
\begin{enumerate}
    \item the RSS $3,2,1$; 
    \item the sequence $3, 1,2,1$; 
    \item the sequence $2, 3, 2, 1$. 
\end{enumerate}
Only the RSS is a mutation cycle; the other two sequences give an isomorphic (but not equal) quiver.
\cS{simulate and visualize}
\end{example}
} 

\subsection{Other cycles}

Not all known mutation cycles are formed from triangular extensions.
We present one new family of mutation cycles (discovered with Sergey Fomin \cite{fominPrivComs}) which generalizes a construction of \textcite{MR2805200}.

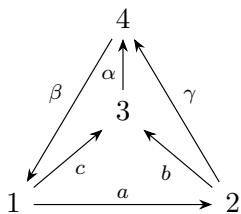
\begin{figure}[ht]
\centering

\[\begin{tikzcd}[every arrow/.append style = {-{Stealth}}]
    & 4 \\
    & 3 \\
    1 & & 2 \\
    \arrow[from=2-2, to=1-2, "\alpha", pos=0.3]
    \arrow[from=3-1, to=2-2, "c", swap]
    \arrow[from=3-3, to=2-2, "b"]
    \arrow[from=1-2, to=3-1, "\beta", swap]
    \arrow[from=3-1, to=3-3, "a"]
    \arrow[from=3-3, to=1-2, "\gamma", swap]
    \end{tikzcd}\]
\caption{When $\alpha = u_{k}(a) - u_{k-2}(a)$, $\beta = u_{k-1}(a)b + u_{k}(a)c$, and $\gamma = u_{k-2}(a)b + u_{k-1}(a)c$ with $a,b,c \geq 2$, this $4$-vertex quiver $Q$ lies on a mutation cycle of length $2k+2$.}
\label{fig:fordy-marsh}
\end{figure}

\begin{example}
\label{eg:fordy-marsh not extension}
To explicitly describe the multiplicities in this example, we will need the \emph{monic Chebyshev Polynomials} $u_k(a)$. 
They are defined recursively, with $u_0(a)=1, u_1(a) = a$ and $u_k(a) = a u_{k-1}(a) - u_{k-2}(a)$ (Cf. \cite[Section 3]{LMC}). 
It is easy to check that $u_k(a) > 0$ whenever $a \geq 2$.

Choose $3$ integers $a,b,c \geq 2$, and an additional positive integer $k$.
Construct a $4$-vertex quiver as shown in Figure~\ref{fig:fordy-marsh}.
Let $\mathbf L$ be the alternating sequence $2,1,2,1, \ldots$ with $|L|=k$.
Then this quiver lies on the simple mutation cycle $\mathbf M = \mathbf L, 4, \sigma(\mathbf L^{-1}), 3.$

Note that $\mu_{\mathbf L, 4}(Q) = \sigma(Q)$, with $\sigma = (1,2) (3,4)$.

Every quiver in $[Q]_{\mathbf M}$ contains an oriented $4$ cycle. 
Thus none of these quivers are a triangular extension of two smaller quivers, and these mutation cycles are not a consequence of Corollary~\ref{cor:extensions give iso cycles}.
Further, no subsequence of $\mathbf M$ is a reddening sequence. 
\end{example}

\hide{
\section{Goals}

\begin{itemize}

    \hide{
    \item Generate all reddening sequences for dreaded torus (see if any are equality; if not, see what prop 2.3 gives. \cS{wrote some code; there are many reddening sequences, but none with length $\leq 10$ which have equality. More than $10$ mutations takes awhile (searching at length $14$ now). Found some, allegedly: [0, 1, 2, 1, 2, 0, 3, 0, 3, 2, 3, 1] , [0, 1, 3, 2, 3, 2, 1, 2, 0, 3, 0, 1] (vertices are for 0 indexed version of Figure~\ref{fig:dreaded torus})}
    \cS{listing a couple shorter reddening seqs; I don't know what automorphism they induce.
    [0, 2, 3, 1, 0, 2]
[0, 1, 2, 1, 3, 2, 0, 1]
[0, 2, 0, 3, 0, 1, 3, 2]}

    \item Add something about the permutation groups you get from reddening sequences. 
    Usually trivial.
    \cS{Note that there's a redd seq which has order $4$, 3134231. When does this occur? Does this reddening seq overlap often? ie, tack on an A2 cycle (before or after) - with enough A2, can get any permutation.}
    \cS{Consider a quiver with 1 arrow between 14 and 23, otherwise abundant; can we say some reddening permutations never occur?}
    }
    
\end{itemize}}

\printbibliography
\end{document}